\newcommand{\bbeta}{\bm{\beta}}
\newcommand{\bgamma}{\boldsymbol{\gamma}} 
\newcommand{\bSigma}{\boldsymbol{\Sigma}}
\newcommand{\bsigma}{\boldsymbol{\sigma}} 
\newcommand{\bpsi}{\boldsymbol{\psi}}
\newcommand{\dif}{\mathrm{d}}
\newcommand{\R}{\mathbb{R}}
\title{On Naive Mean-Field Approximation for high-dimensional canonical GLMs}
\author{Sumit Mukherjee \thanks{Department of Statistics, Columbia University, New York, NY 10027, U.S.A. email: sm3949@columbia.edu.} \and Jiaze Qiu\thanks{Department of Statistics, Harvard University, Cambridge, MA 02138, U.S.A. email: jiazedengqiu@gmail.com. } \and 
  Subhabrata Sen\thanks{Department of Statistics, Harvard
    University, Cambridge, MA 02138, U.S.A. email: subhabratasen@fas.harvard.edu.} }
\theoremstyle{plain}\newtheorem{lemma}{\textbf{Lemma}}\newtheorem{theorem}{\textbf{Theorem}}\newtheorem{corollary}{\textbf{Corollary}}\newtheorem{assumption}{\textbf{Assumption}}\newtheorem{example}{\textbf{Example}}\newtheorem{definition}{\textbf{Definition}}
\theoremstyle{definition}
\theoremstyle{definition}
\renewenvironment{proof}[1][\proofname] {
	\par\pushQED{\qed}\normalfont
	\topsep6\p@\@plus6\p@\relax
	\trivlist\item[\hskip\labelsep\bfseries#1\@addpunct{:}]
 	\ignorespaces
} {
	\popQED\endtrivlist\@endpefalse
}
\begin{document}

\maketitle

\begin{abstract}
We study the validity of the Naive Mean Field (NMF) approximation for canonical GLMs with product priors. This setting is challenging due to the non-conjugacy of the likelihood and the prior. Using the theory of non-linear large deviations \cite{MR3519474,MR3881829,austin2019structure}, we derive sufficient conditions for the tightness of the NMF approximation to the log-normalizing constant of the posterior distribution. As a second contribution, we establish that under minor conditions on the design, any NMF optimizer is a product distribution where each component is a quadratic tilt of the prior. In turn, this suggests novel iterative algorithms for fitting the NMF optimizer to the target posterior. Finally, we establish that if the NMF optimization problem has a ``well-separated maximizer", then this optimizer governs the probabilistic properties of the posterior. Specifically, we derive credible intervals with average coverage guarantees, and characterize the prediction performance on an out-of-sample datapoint in terms of this dominant optimizer. 
\end{abstract}





\section{Introduction}
High-dimensional data is ubiquitous in modern statistics and machine learning. Bayesian methods provide a powerful mechanism to extract the latent signal from the data. Under a Bayesian paradigm, the scientist constructs the posterior distribution, and draws statistical conclusions based on this probability distribution. In modern applications, the posterior distribution is typically high-dimensional. This motivates the development of basic tools to understand the properties of high-dimensional posterior distributions. 

The present article investigates this broader question in the concrete setting of generalized linear models (GLMs) \cite{mccullagh2019glms}. GLMs are canonical models for supervised learning. Given data  $\{(y_i, \mathbf{x}_i): 1\leq i \leq n\}$, GLMs, e.g., logistic/probit/poisson regression, are ubiquitous if the responses $y_i$ are either categorical or count data. These models generalize the well-known linear regression model. Throughout, we assume that one observes $\{(y_i, \mathbf{x}_i): 1\leq i \leq n\}$, $\mathbf{x}_i \in \mathbb{R}^p$; the matrix  $\mathbf{X}^{\top} = [\mathbf{x}_1 , \ldots, \mathbf{x}_n] \in \mathbb{R}^{p \times n}$ is referred to as the design or the measurement matrix. The responses $y_i$ are assumed to take values in a closed subset $\mathcal{X}$ of $\mathbb{R}$ e.g., $\mathcal{X} = \mathbb{R}$ for the linear model, $\mathcal{X} = \{0,1\}$ for logistic models and $\mathcal{X} = \mathbb{N}$ for the poisson regression model. We assume that the conditional distribution of $\mathbf{y} = (y_1, \cdots, y_n)$ is given by a canonical GLM, introduced below:

 The random variables $\{y_i\}_{1\le i\le n}$ are mutually independent, and the marginal distribution of $y_i$ is $P_i$, where $P_i$ comes from a natural exponential family (NEF) on $\mathcal{X}$, defined via the Radon-Nikodyn derivative $p_i:=\frac{dP_i}{d\lambda}$, where $p_i(\cdot)$ is given by
\begin{equation}
\label{eq:NEF}
    p_i(y_i) = \exp \left ( y_i \theta_i -b(\theta_i)  \right ).
\end{equation}
Here $\lambda$ is a probability measure on $\mathcal{X}$, $\theta_i = \mathbf{x}_i^{\top} \bbeta$ for $i\in [n]$, $\bbeta \in \mathbb{R}^p$ is the vector of regression coefficients. 
The function $b(\cdot)$ is the log normalizing constant of the p.d.f.~$p_i(\cdot)$, given by
\begin{align}\label{eq:b}
b(\theta_i):= \log\Big[\int_{\mathcal{X}}  \exp \left ( y_i \theta_i  \right )d\lambda(y_i)\Big].
\end{align}
We assume that the base measure $\lambda$ is such that $b(\theta_i)$ is finite for all $\theta_i\in \mathbb{R}$.
Standard exponential family theory (see for example \cite{wainwright2008graphical}) implies that $b(\cdot)$ is a smooth function, with $b(0)=0$. Before proceeding further, we provide two concrete examples of canonical GLMs. 

\begin{example} [Linear Regression] 
\label{def:linear_regression}
For linear regression, $\mathcal{X}=\mathbb{R}$ and $\lambda$ is the $\mathcal{N}(0,1)$ distribution. With $\theta_i=\mathbf{x}_i^T\bbeta$,  $p_i(y_i)$ corresponds to the $\mathcal{N}( \theta_i, 1)$ pdf.
In this case we have $b(\theta)=\frac{\theta^2}{2}.$
\end{example}
\begin{example} [Binary Logistic Regression and Binomial Logistic Regression] 
\label{def:logistic_regression}
For (binary) logistic regression, $\mathcal{X} = \{0,1\}$, and $\lambda$ is the $Bern(1/2)$ distribution. With $\theta_i=\mathbf{x}_i^T\bbeta$ as before, the model posits
\begin{align}\label{eq:logistic_regression_def}
p_i(y_i|\mathbf{x}_i)=\frac{2\exp(\theta_i y_i)}{1+\exp(\theta_i)},\quad y_i\in \{0,1\}.
\end{align} In this case we have
    $$    b(\theta) = \log (1 + \exp(\theta))-\log 2.$$
More generally, for binomial logistic regression, one takes $\mathcal{X} = \{ 0,1,\dots, N \} := [N]$ where $N$ is a known positive integer and $\lambda$ is the $Bin(N,1/2)$ distribution on $[N]$. In this case, we have,
$$p_i(y_i|\mathbf{x}_i)=\frac{2^N\exp(\theta_i y_i)}{(1+\exp(\theta_i))^N},\quad y_i\in\{ 0,1,\cdots,N\}.$$ 
In addition,
    $$    b(\theta) = N \log (1 + \exp(\theta)) - N \log 2.$$
\end{example}

\noindent 
The canonical GLM likelihood in \eqref{eq:NEF} can be written as 
\begin{equation}
\label{eq:likelihood} 
    \begin{aligned}
        L(\boldsymbol{\beta}) &= \prod_{i = 1}^{n} \exp \left (  y_i \theta_i -b(\theta_i)     \right ),
    \end{aligned}
\end{equation}
where $\boldsymbol{\theta} = \mathbf{X} \bbeta$.  We now turn our attention to Bayesian inference for canonical GLMs, which is the main focus of this paper.
Given a prior distribution $\pi_p$ for $\bbeta \in \mathbb{R}^p$, the scientist constructs the posterior distribution of ${\bbeta}$ given $\mathbf{y}:= (y_1, y_2, \dots, y_n)$ as  
\begin{equation}
\label{eq:posterior} 
    \frac{\mathrm{d} \mu }{\mathrm{d} \pi_p } (\bbeta | \mathbf{y}) :=  \frac{1}{\mathcal{Z}_p} \exp \left [ \sum_{i = 1}^{n} \left ( y_i \theta_i -b(\theta_i) \right ) \right ]  \text{,} 
\end{equation}
where $\mathcal{Z}_p$ is the normalizing constant of the posterior distribution, i.e.,
\begin{equation}
\label{eq:partition_function}
    \mathcal{Z}_p := \int \exp \left [ \sum_{i = 1}^{n} \left (  y_i \theta_i -b(\theta_i) \right ) \right ]   \mathrm{d} \pi_p (\bbeta) \text{.}
\end{equation}
Borrowing terminology from statistical physics, we will refer to $\mathcal{Z}_p$ as the partition function of the model and
\begin{equation}
\label{eq:hamiltonian}
    H(\bbeta) :=\log L(\bbeta)=\sum_{i = 1}^{n}   \left (  y_i \theta_i - b(\theta_i) \right ) 
\end{equation}
as the Hamiltonian. From a theoretical perspective, one seeks to understand the properties of the posterior distribution $\mu(\cdot|\mathbf{y})$. To this end, it is crucial to first analyze the properties of the log partition function $\mathcal{Z}_p$. Building up on ideas developed in the study of mean-field spin glasses, the log-partition function of canonical GLMs has been characterized recently in \cite{barbier2019optimal}, under the assumption that the design entries $\mathbf{X}$ are i.i.d., and that the GLM is well-specified i.e., the observed data $\{(y_i, \mathbf{x}_i): 1\leq i \leq n\}$ is actually sampled from a model of the form \eqref{eq:NEF}. This well-specified property leads to powerful symmetries in the posterior e.g. the Nishimori identities (see \cite{barbier2019optimal} for additional details), which are crucial in the subsequent analysis. In sharp contrast, we will allow both deterministic and random designs, and will not assume a well-specified model. Thus our results are incomparable to the ones derived in \cite{barbier2019optimal}. In addition, we note that our results are not applicable to the proportional asymptotic setting with an i.i.d. design, and thus perfectly complements the existing results.



In this work, we analyze the log-partition function $\log \mathcal{Z}_p$ and the posterior distribution $\mu(\cdot|\mathbf{y})$ under relatively weak assumptions on the design $\mathbf{X}$ and the prior $\pi_p$. To this end, we make the following assumptions. 
\begin{assumption}[Properties of the prior $\pi_p$] 
\label{assump:prior} 
We work under a product prior with bounded support. Specifically, we assume that $\pi_p:= \pi^{\otimes p}$ for a probability distribution $\pi$ on $[-1,1]$. 
\end{assumption} 
The interval $[-1,1]$ is not special---our results will generalize immediately to any compact interval $[-M,M]$. Next, we turn to the assumption on the non-linearity $b(\cdot)$. Note that $b(\cdot)$ as defined in \eqref{eq:b} is the cumulant generating function of the probability distribution $\lambda$ on $\mathcal{X}$, and thus $b''(\cdot) \geq 0$. 
We always assume the following smoothness conditions on the derivatives of $b(\cdot)$. 
\begin{assumption}[Properties of $b''$]\label{assump:cont} Assume that the function  $b''(\cdot) : \mathbb{R} \to \mathbb{R}$ is bounded and uniformly continuous. 
\end{assumption}
We note that the linear regression model and both logistic regression models as defined in Examples~\ref{def:linear_regression} and \ref{def:logistic_regression} satisfy Assumption~\ref{assump:cont}. Finally, we collect our assumptions on the design $\mathbf{X}$.

\begin{assumption}[Assumptions on $\mathbf{X}$] 
\label{assump:design} 
Assume that the design $\mathbf{X}$ satisfies the following conditions. 
    \begin{itemize}
        \item[(i)] $\|\mathbf{X}^{\top} \mathbf{X}\|_2 = O(1)$. 
        \item[(ii)] For any $\delta>0$ we have  $\sum_{i=1}^{n} \sum_{j=1}^{p} x_{ij}^{2} \mathbf{1}(|x_{ij}| > \delta) = o(p)$. 

        \item[(iii)] Finally assume that 
        \begin{align}
         \lim_{C \to \infty}  \lim_{p \to \infty}  \frac{1}{p} \sup_{S \subset [n], |S| \leq Cp} \sum_{i\in S} \|\mathbf{x}_i\|_2^2 =0. \label{eq:norm} 
        \end{align}
    \end{itemize}
\end{assumption}

Before proceeding further, we pause to interpret Assumption~\ref{assump:design}.  Assumption~\ref{assump:design} (i) is essentially a convenient normalization, and ensures that $\log \mathcal{Z}_p$ is of order $p$. This assumption already appears in the literature on high dimensional Bayesian linear regression \cite{mukherjee2021variational,celentano2023mean}. Assumption~\ref{assump:design} (ii) ensures that the design does not have too many $O(1)$ entries. To understand the third assumption, first note that for any $S \subseteq [n]$,  the quantity $\sum_{i \in S} \|\mathbf{x}_i\|_2^2$ represents the Frobenius norm of the sub-matrix $\mathbf{X}_S = (x_{ij}: i \in S, j \in [p])$. Thus, LHS of \eqref{eq:norm} can be bounded as follows: $$\mathrm{Tr}(\mathbf{X}_{S}^{\top} \mathbf{X}_S)\le \mathrm{Tr}(\mathbf{X}^\top \mathbf{X})\le p \|\mathbf{X}\|_2^2=O(p).$$
The third assumption demands that $\mathrm{Tr}(\mathbf{X}_{S}^{\top} \mathbf{X}_S) = o(p)$ uniformly over $|S|\le Cp$.

The log-partition function $\log \mathcal{Z}_p$ is generally intractable, in the absence of specific assumptions on the design $\mathbf{X}$ and the generative model on $\mathbf{y}$. We will employ the notion of the naive mean-field approximation to approximate the log normalizing constant. For any distribution $Q \ll \pi_p$, it follows by direct computation that 
\begin{align}
\log \mathcal{Z}_p = \mathbb{E}_{Q}[H(\boldsymbol{\beta})] - \mathrm{D}_{\mathrm{KL}} (Q \| \pi_p) - \mathrm{D}_{\mathrm{KL}} (Q\| \mu).  \label{eq:ELBO1}
\end{align} 
Using the non-negativity of KL-divergence, we have,  
\begin{equation}
        \log \mathcal{Z}_p \geq   \sup_{Q\in \mathcal{P}} \Big\{ \mathbb{E}_{\bbeta \sim Q} \left [  H (\bbeta) \right ] -\operatorname{D}_{\text{KL}}(Q \| \pi_p )\Big\}, \label{eq:ELBO}
\end{equation}
 where $\mathcal{P}$ is the set of all product measures $Q = \prod_{i=1}^{p} Q_i$ on $[-1,1]^p$ which are absolutely continuous with respect to $\pi_p$. The bound \eqref{eq:ELBO} is generally only a lower bound. We identify a broad set of conditions which ensure that the lower bound is essentially tight (to leading order). To this end, we introduce the following definition. 

\begin{definition}
    We say that the Naive Mean-Field (NMF) approximation is correct to leading order if 
    \begin{align} 
 \log \mathcal{Z}_p = \sup_{Q\in \mathcal{P}} \Big\{ \mathbb{E}_{\bbeta \sim Q} [H (\bbeta)]  -\operatorname{D}_{\mathrm{KL}}(Q \| \pi_p )) \Big\} +o(p), \label{eq:nmf_approx} 
    \end{align} 
    Using \eqref{eq:ELBO1}, the above definition is equivalent to
  \begin{align*}
    \inf_{Q\in \mathcal{P}} \mathrm{D}_{\mathrm{KL}} (Q\| \mu)=o(p).
    \end{align*}
\end{definition}

\noindent 
Our first result identifies conditions on the design $\mathbf{X}$ such that \eqref{eq:ELBO} is correct to leading order (i.e.~\eqref{eq:nmf_approx} holds). We collect some notions that are crucial for our first result. 

\begin{definition}
For any $\bbeta\in [-1,1]^p$, define the matrix $\mathbf{A}_{\bbeta} := \mathbf{X}^{\top} \mathbf{D}_{\bbeta} \mathbf{X} - \mathrm{diag}(\mathbf{X}^{\top} \mathbf{D}_{\bbeta} \mathbf{X})$, where $\mathbf{D}_{\boldsymbol{\beta}} \in \mathbb{R}^n$ is a diagonal matrix with $\mathbf{D}_{\boldsymbol{\beta}}(i,i) := b''(\langle \mathbf{x}_i, \boldsymbol{\beta}\rangle)$.

\end{definition}

\begin{definition}
Given a set $S\subseteq \R^k$ and $\delta>0$, we say a set $\widetilde{S}\subseteq\R^k$ is a $\delta$ net with respect to a metric $d(\cdot,\cdot)$ on $\R^k$, if for every ${\bf v}\in S$ there exists $\widetilde{\bf v}\in \widetilde{S}$ such that $d({\bf v},\widetilde{\bf v})\le \delta$.
\end{definition}
\begin{theorem}
\label{thm:covering}
Suppose Assumptions \ref{assump:prior}, \ref{assump:cont} and \ref{assump:design} hold. 
     Finally, assume that for any $\varepsilon>0$, the set
     $$\mathcal{A}_p:=\{ \mathbf{A}_{\bbeta} \bgamma, \bbeta, \bgamma\in [-1,1]^p\}\subseteq \R^p$$
     has a $p\varepsilon$ net in $\ell_1$ metric of size $N(p,\varepsilon)$, such that
%
%
    $$\lim_{p \to \infty} \frac{1}{p} \log N(p,\varepsilon) =0.$$
    Then the NMF approximation is correct to leading order. 
\end{theorem}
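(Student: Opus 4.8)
The plan is to prove the matching upper bound $\log \mathcal{Z}_p \le \sup_{Q\in\mathcal{P}}\{\mathbb{E}_{\bbeta\sim Q}[H(\bbeta)] - \mathrm{D}_{\mathrm{KL}}(Q\|\pi_p)\} + o(p)$; together with the elementary lower bound \eqref{eq:ELBO} this gives \eqref{eq:nmf_approx}. The argument is organized around the non-linear large deviations principle of \cite{MR3519474,MR3881829,austin2019structure}: the naive mean-field lower bound is tight once $H$ differs from a coordinate-separable function only through a term whose ``gradient field'' ranges over a set of sub-exponential covering number. The key structural observation is that for a canonical GLM, $\nabla H(\bbeta) = \mathbf{X}^\top(\mathbf{y} - b'(\mathbf{X}\bbeta))$ and $\nabla^2 H(\bbeta) = -\mathbf{X}^\top\mathbf{D}_\bbeta\mathbf{X}$; the diagonal of the Hessian contributes only separable (single-coordinate) terms, while its off-diagonal part is exactly $-\mathbf{A}_\bbeta$. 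Consequently, when the coordinates other than $j$ are displaced by $\bgamma$, the effective field $\partial_j H$ felt by coordinate $j$ changes, to first order, by the $j$-th entry of $\mathbf{A}_\bbeta\bgamma$. The hypothesis that $\mathcal{A}_p = \{\mathbf{A}_\bbeta\bgamma\}$ has an $\ell_1$ net of size $N(p,\varepsilon)$ with $\frac1p\log N(p,\varepsilon)\to 0$ is precisely the statement that this family of ``interaction fields'' has vanishing exponential complexity in the relevant $\ell_1$ metric, which (as in the cited works) is the metric for which discretizing a field $\mathbf{v}$ and using it as a multiplicative tilt $e^{\langle \mathbf{v},\bbeta\rangle}$ costs at most $\|\mathbf{v}-\widetilde{\mathbf{v}}\|_1\,\|\bbeta\|_\infty \le p\varepsilon$ on $[-1,1]^p$.

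The first main step is the net-and-decouple argument. Fix $\varepsilon>0$ and let $\mathbf{m}$ be the mean vector of a near-maximizer of the mean-field functional. One linearizes the effective fields around $\mathbf{m}$: for $\bgamma := (\bbeta-\mathbf{m})/2\in[-1,1]^p$, the ``interaction correction'' is approximately $\mathbf{A}_{\bbeta}\bgamma \in \mathcal{A}_p$, which lies within $\ell_1$-distance $p\varepsilon$ of one of the $N(p,\varepsilon)$ net points $\mathbf{v}_1,\dots,\mathbf{v}_{N(p,\varepsilon)}$. Partitioning $[-1,1]^p$ according to which $\mathbf{v}_k$ is nearest, on each cell $H$ agrees up to an additive $O(p\varepsilon)$ with a \emph{separable} Hamiltonian $\langle \mathbf{c}_k,\bbeta\rangle + \sum_j g_{j,k}(\beta_j)$ (here $g_{j,k}$ absorbs the affine part of $H$ and the diagonal quadratic term); for such a separable Hamiltonian the mean-field bound is an exact identity, and its optimizer is a product measure whose $j$-th marginal is a quadratic tilt of $\pi$, which also dovetails with the paper's second contribution. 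Bounding $\mathcal{Z}_p$ by a sum over the $N(p,\varepsilon)$ cells, using $\frac1p\log N(p,\varepsilon)\to 0$, and then sending $\varepsilon\to 0$ yields the desired upper bound — modulo the self-consistency between cell membership and the optimizer's mean vector, which is handled by a standard fixed-point/consistency check that the cell containing $\mathbf{m}$ is the one indexed by the net point closest to $\mathbf{A}_{\mathbf{m}}\mathbf{m}$.

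The second main step is to justify the replacement of the exact effective field $\partial_j H(\bbeta)$ by its first-order (in $\bgamma$) surrogate with total error $o(p)$. The discrepancy is driven by the non-quadraticity of $b$: for $\theta=\langle\mathbf{x}_i,\mathbf{m}\rangle$ and $s_i=\langle\mathbf{x}_i,\bbeta-\mathbf{m}\rangle$, the term $b'(\theta+s_i)-b'(\theta)-b''(\theta)s_i = \int_0^{s_i}(b''(\theta+u)-b''(\theta))\,\mathrm{d}u$ is bounded by $|s_i|\,\omega(|s_i|)$, where $\omega$ is the modulus of continuity of $b''$. Assumption~\ref{assump:cont} ensures $\omega(\delta)\to 0$ and $\omega$ bounded; Assumption~\ref{assump:design}(i) gives $\sum_i s_i^2 \le \|\mathbf{X}\|_2^2\,\|\bbeta-\mathbf{m}\|_2^2 = O(p)$, so that $|s_i|\le\delta$ on all but $O(p/\delta^2)$ indices; and Assumptions~\ref{assump:design}(ii)--(iii) are used to truncate the large design entries and to control $\sum_{i\in S}\|\mathbf{x}_i\|_2^2$ on the ``bad'' set $S$ of indices where $|s_i|$ is not small — this is exactly the regime those two assumptions were tailored for. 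Summing these contributions over $i$ and $j$ and optimizing in $\delta$ then gives the required $o(p)$ bound. Note that for a quadratic $b$ (the linear regression model), $b''$ is constant, $\mathbf{A}_\bbeta$ does not depend on $\bbeta$, and this entire step is vacuous, so the proof collapses to the clean $\ell_1$-net discretization of the first step.

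I expect the third step — making the non-quadratic remainder genuinely $o(p)$ — to be the main obstacle. The naive bound on the remainder (e.g.\ bounding the error field in $\ell_1$ by Cauchy--Schwarz through $\|\mathbf{x}_i\|_1\le\sqrt p\,\|\mathbf{x}_i\|_2$) is of order $p^{3/2}\omega(\delta)$, which is too weak, so one cannot simply read the bound off the a priori estimates. The fix is to first establish enough localization — that the Gibbs measure concentrates so that the relevant displacements $\bbeta-\mathbf{m}$ have $\ell_2$-norm $o(\sqrt p)$, or more precisely that $\sum_i s_i^2 = o(p)$ on the bulk — which then makes the modulus-of-continuity gain effective; this localization itself may require a bootstrap that feeds the net argument back into a second-moment / stability estimate for $H$. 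Marrying this bootstrap cleanly with the $\exp(o(p))$-size partition of the first step, while keeping all error terms $o(p)$ uniformly, is the delicate technical heart of the proof; the remaining pieces (the separable exact identity, the union bound, the $\varepsilon\to 0$ limit) are routine.
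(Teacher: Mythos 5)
You have the right high-level ingredients---Austin's non-linear large deviations framework, the observation that the off-diagonal Hessian is $-\mathbf{A}_{\bbeta}$, the role of the $\ell_1$ net hypothesis, and the interplay of the design assumptions with the modulus of continuity of $b''$---but the specific route you propose is genuinely different from the paper's and, as you yourself flag, has a gap you do not close. Your plan is a \emph{global} second-order Taylor expansion of $H$ around a reference point $\mathbf{m}$, together with a hand-built partition of $[-1,1]^p$ into cells where $H$ is nearly separable, a ``fixed-point/consistency'' check that the Gibbs mass sits in the right cell, and a bootstrap/localization argument to bring the $p^{3/2}\omega(\delta)$ remainder down to $o(p)$. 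None of these last three steps appear in the paper, and the bootstrap in particular is a genuine open hole in your argument: you identify the obstacle correctly (the naive error estimate for the global linearization is too big by a factor of $\sqrt p$), but you only sketch a plan to fix it, and that plan (localizing $\sum_i s_i^2 = o(p)$ on the bulk of the Gibbs measure, and feeding that back into the net argument) would itself require proving a form of the conclusion you are after.

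The paper sidesteps all three of these difficulties by invoking Austin's framework at the level of the \emph{discrete gradient}
\begin{align*}
\nabla H(\bsigma; \bbeta) \;=\; \sum_{j=1}^{p}\sum_{i=1}^{n}\Big(g_i\big(\langle\mathbf{x}_i,\bbeta_{\sigma_j,j}\rangle\big) - g_i\big(\langle\mathbf{x}_i,\bbeta_{0,j}\rangle\big)\Big),
\end{align*}
which perturbs \emph{one coordinate at a time}. Austin's result (black-boxed here) reduces tightness of NMF to the existence of an $\exp(o(p))$-size net, in sup norm over $\bsigma\in[-1,1]^p$, for the family of functions $\bsigma\mapsto\nabla H(\bsigma;\bbeta)$, which for an affine-in-$\bsigma$ function is exactly an $\ell_1$ net for the coefficient vectors; this is why the hypothesis is phrased the way it is, and it also removes any need for a consistency check between the partition and the Gibbs mass. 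More importantly, because a single-coordinate move changes $\langle\mathbf{x}_i,\bbeta\rangle$ by $x_{ij}\sigma_j$, the Taylor error at that coordinate is of size $x_{ij}^2\sigma_j^2\,\omega(|x_{ij}|)$, so the total error is controlled by $\omega(\delta)\,\mathrm{Tr}(\mathbf{X}^{\top}\mathbf{X}) + \|b''\|_\infty\sum_{i,j}x_{ij}^2\mathbf{1}(|x_{ij}|>\delta)$, which is $o(p)$ directly from Assumption~\ref{assump:design}(i)--(ii). This is the step your global expansion misses: the discrete-gradient structure keeps the Taylor remainder \emph{quadratic} in the design entries, so it pairs with $\mathrm{Tr}(\mathbf{X}^{\top}\mathbf{X})=O(p)$ rather than with $\|\mathbf{x}_i\|_1\le\sqrt{p}\,\|\mathbf{x}_i\|_2$, and no bootstrap, localization, or second-moment stability estimate is needed. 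A further pair of Taylor steps (reindexing $\bbeta_{0,j}\leadsto\bbeta$ and then $\bbeta\leadsto t_*\bbeta$, using $\mathcal{S}(\bbeta,\varepsilon)$ together with Assumption~\ref{assump:design}(iii)) reduces the $\bbeta$-dependent part of the discrete gradient to exactly $\bsigma^{\top}\mathbf{A}_{t_*\bbeta}\bbeta$, which lies in $\mathcal{A}_p$ and is covered by hypothesis. You should replace your global-Taylor-plus-bootstrap plan with this one-coordinate-at-a-time expansion to close the argument.
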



In this context, one might be naturally curious about the necessity of our assumptions on the design $\mathbf{X}$. 
Specifically, one wonders if Assumption~\ref{assump:design} may be weakened. In fact, there are well-known settings where Assumption~\ref{assump:design} is violated, and the NMF approximation fails: a prominent example arises in the proportional asymptotics regime (i.e., $n \propto p$) and $x_{ij} \sim \mathcal{N}(0,\frac{1}{n})$. Assumption~\ref{assump:design} (iii) is violated in this example. Non-rigorous predictions from spin-glass theory predict that the NMF approximation should be inaccurate in this regime; instead, the Thouless-Anderson-Palmer (TAP) approximation is expected to yield a tight estimate for the log-partition function \cite{mezard1987spin}. Another prominent counterexample arises for sparse designs: assume $n \propto p$ and $x_{ij} \sim \mathrm{Ber}(1/n)$ are i.i.d. The design violates both Assumption~\ref{assump:design} (ii) and (iii). One expects the Bethe approximation to yield a tight bound to the log-partition function in this example \cite{mezard2009information}. In conclusion, it is impossible to weaken Assumption~\ref{assump:design} without imposing alternate conditions on the design matrix $\mathbf{X}$.  

The net condition in Theorem \ref{thm:covering} can be difficult to check in specific applications. We next provide a set of easily checkable sufficient conditions. 

\begin{theorem}
\label{thm:covering_suff} 
Suppose Assumptions \ref{assump:prior}, \ref{assump:cont} and \ref{assump:design} hold.  In addition, assume that both the following conditions are satisfied:

\begin{itemize}
\item[(i)] For any $\varepsilon>0$, the set of matrices  $\{\mathbf{A}_{\bbeta}, \bbeta \in [-1,1]^p\}$ has  an $\varepsilon$-net in operator norm of size $N(p,\varepsilon)$ such that 
$$\lim_{p\to\infty}\frac{1}{p}\log N(p,\varepsilon)=0.$$
\item[(ii)] For any $\bbeta \in [-1,1]^p$ we have $\mathrm{Tr}(\mathbf{A}_{\bbeta}^2) = o(p)$. 
\end{itemize} 
Then the NMF approximation is correct to leading order. 
\end{theorem}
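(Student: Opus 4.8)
The plan is to derive Theorem~\ref{thm:covering_suff} from Theorem~\ref{thm:covering}: I will show that conditions (i)--(ii), together with Assumptions~\ref{assump:prior}--\ref{assump:design}, imply that for every $\varepsilon>0$ the set $\mathcal{A}_p=\{\mathbf{A}_{\bbeta}\bgamma:\bbeta,\bgamma\in[-1,1]^p\}$ admits a $p\varepsilon$-net in $\ell_1$ of size $\exp(o(p))$; Theorem~\ref{thm:covering} then finishes the proof. Two uniform-in-$\bbeta$ facts will be used throughout. First, since $b''$ is bounded (Assumption~\ref{assump:cont}) and $\|\mathbf{X}^{\top}\mathbf{X}\|_2=O(1)$ (Assumption~\ref{assump:design}(i)), one has $\|\mathbf{A}_{\bbeta}\|_{\mathrm{op}}\le C$ for a constant $C$ independent of $p$ and $\bbeta$. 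Second, $\|\bgamma\|_2\le\sqrt p$ for $\bgamma\in[-1,1]^p$, so any two points of $\mathcal{A}_p$ are within $\sqrt p$ times their Euclidean distance in $\ell_1$; hence it suffices to build an $\ell_2$-net of $\mathcal{A}_p$ at scale $\asymp\varepsilon\sqrt p$.

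I would construct the net in two stages. \emph{Stage 1 (discretize $\bbeta$).} By condition (i), and snapping net points back to the family (enlarging $\varepsilon$ by a constant factor if necessary), fix an operator-norm $(\varepsilon/3)$-net $\{\mathbf{A}_{\bbeta_1},\dots,\mathbf{A}_{\bbeta_{N_1}}\}$ of $\{\mathbf{A}_{\bbeta}\}$ with $\tfrac1p\log N_1\to0$; for any $\bbeta$ there is $k$ with $\|\mathbf{A}_{\bbeta}\bgamma-\mathbf{A}_{\bbeta_k}\bgamma\|_2\le(\varepsilon/3)\sqrt p$ for all $\bgamma$. \emph{Stage 2 (low-rank truncation).} For fixed $k$ diagonalize the symmetric matrix $\mathbf{A}_{\bbeta_k}=\sum_i\lambda_i w_i w_i^{\top}$ with $|\lambda_1|\ge|\lambda_2|\ge\cdots$, and set $r=r_k:=\lceil 9\,\mathrm{Tr}(\mathbf{A}_{\bbeta_k}^2)/\varepsilon^2\rceil$, $\mathbf{A}_{\bbeta_k}^{(r)}:=\sum_{i\le r}\lambda_i w_i w_i^{\top}$. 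From $r\lambda_{r+1}^2\le\sum_{i\le r+1}\lambda_i^2\le\mathrm{Tr}(\mathbf{A}_{\bbeta_k}^2)$ we get $|\lambda_{r+1}|\le\varepsilon/3$, hence $\|(\mathbf{A}_{\bbeta_k}-\mathbf{A}_{\bbeta_k}^{(r)})\bgamma\|_2\le|\lambda_{r+1}|\,\|\bgamma\|_2\le(\varepsilon/3)\sqrt p$ for all $\bgamma$; and by condition (ii), used uniformly over $\bbeta$, $\max_k r_k=o(p)$.

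It remains to net each truncated slice $\{\mathbf{A}_{\bbeta_k}^{(r)}\bgamma:\bgamma\in[-1,1]^p\}$. This set lies in the $r$-dimensional subspace $\mathrm{span}(w_1,\dots,w_r)$ and, since $\|\mathbf{A}_{\bbeta_k}^{(r)}\bgamma\|_2\le\|\mathbf{A}_{\bbeta_k}\|_{\mathrm{op}}\|\bgamma\|_2\le C\sqrt p$, inside a Euclidean ball of radius $C\sqrt p$ there; a standard volumetric bound gives an $\ell_2$-net at scale $(\varepsilon/3)\sqrt p$ of cardinality at most $(1+6C/\varepsilon)^{r_k}$. The decisive point is that the radius-to-scale ratio here is $O(1/\varepsilon)$, independent of $p$, so $\tfrac1p\log\big[(1+6C/\varepsilon)^{r_k}\big]=\tfrac{r_k}{p}\log(1+6C/\varepsilon)\to0$ uniformly in $k$. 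Taking the union over $k$ of these per-slice nets yields a set of size at most $N_1\cdot\max_k(1+6C/\varepsilon)^{r_k}=\exp(o(p))$ which, by the triangle inequality across Stages 1--2 and the final netting (total $\ell_2$ error $\le\varepsilon\sqrt p$, hence $\ell_1$ error $\le\varepsilon p$), is a $p\varepsilon$-net of $\mathcal{A}_p$ in $\ell_1$. Since $\varepsilon>0$ was arbitrary, Theorem~\ref{thm:covering} applies.

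\textbf{Main obstacle.} The crux is keeping the per-slice net sub-exponential. The image of the full cube under $\mathbf{A}_{\bbeta_k}$ lives in a $p$-dimensional ball of radius $\asymp\sqrt p$, whose $\ell_2$-covering number at scale $\varepsilon\sqrt p$ is $\exp(\Theta(p))$ --- far too large. One must therefore truncate $\mathbf{A}_{\bbeta_k}$ to its top $r=O_\varepsilon(\|\mathbf{A}_{\bbeta_k}\|_F^2)=o(p)$ eigendirections \emph{first}, at the price of an $\ell_1$ error $\lesssim\varepsilon p$ that is affordable precisely because $\mathrm{Tr}(\mathbf{A}_{\bbeta_k}^2)=o(p)$; this collapses the ambient dimension to $o(p)$ while leaving the radius-to-accuracy ratio at $O(1/\varepsilon)$, which is what makes the covering number $\exp(o(p))$. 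A related subtlety is that condition (ii) must be invoked uniformly over $\bbeta$ --- so that $\max_{k\le N_1}r_k=o(p)$ across the growing $\bbeta$-net --- which is the intended reading of the hypothesis in this asymptotic regime. (An empirical/Maurey-type covering of the columns of $\mathbf{A}_{\bbeta_k}$ is an alternative to this spectral truncation, but the eigenvalue route keeps the bookkeeping cleanest.)
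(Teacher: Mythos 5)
Your proof is correct and follows essentially the same route as the paper: discretize over $\bbeta$ via the operator-norm net from condition (i), then cover each slice $\{\mathbf{A}_{\bbeta_k}\bgamma : \bgamma\in[-1,1]^p\}$ by exploiting the small Frobenius norm guaranteed by (ii). The only difference is presentational --- the paper delegates the per-slice covering to \cite[Lemma 3.4]{basak2017universality}, whereas you carry out the underlying spectral-truncation/volumetric argument explicitly; you also correctly flag that (ii) must be read uniformly in $\bbeta$, which matches the paper's own usage.
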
 

Even though condition (i) in Theorem \ref{thm:covering_suff} replaces one net condition by another, this is significantly more useful in practice, as illustrated by the applications in 
the following corollary. 

\begin{corollary}
\label{cor:applications} 
Suppose Assumptions \ref{assump:prior}, \ref{assump:cont} hold. Then the NMF approximation is correct to leading order if any one of the conditions (i), (ii) or (iii)  are satisfied in addition.  
\begin{itemize} 
\item[(i)] Suppose Assumption~\ref{assump:design} holds. In addition, for any $C>0$, as $p \to \infty$, $$\lim_{p \to \infty} \max_{S \subset [n]: |S| \leq Cp} \Big\| \sum_{i \in S} \mathbf{x}_i \mathbf{x}_i^{\top}  \Big\|_{\mathrm{op}}=0.$$ Finally, assume that $\mathrm{Tr}(\mathbf{A}_{\bf 0}^2) = o(p)$ (where $\mathbf{A}_0$ is just $\mathbf{A}_{\bbeta}$ with $\bbeta={\bf 0}$).
\item[(ii)] Assume that the rows of $\mathbf{X}$ are i.i.d.~centered gaussian, i.e.~$\mathbf{x}_i \sim \mathcal{N}(0, \boldsymbol{\Sigma}_p/n)$. In addition, assume that $n \gg p$, $\|\bSigma_p\|_{\mathrm{op}} \leq C$ for some universal constant $C>0$ and $\mathrm{Tr}(\bSigma_{p,\mathrm{off}}^2)=o(p)$, where $\bSigma_{p,\mathrm{off}} = \bSigma_p - \mathrm{diag}(\bSigma_p)$.  
\item[(iii)] Suppose Assumption~\ref{assump:design} holds and  $p/n \to \kappa \in (0, \infty)$. 
\end{itemize} 
\end{corollary}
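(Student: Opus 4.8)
The plan is to obtain each of (i)--(iii) by verifying the hypotheses of one of the two preceding theorems: parts~(i) and~(ii) will be reduced to Theorem~\ref{thm:covering_suff}, and part~(iii) to Theorem~\ref{thm:covering} directly. Since Assumptions~\ref{assump:prior} and~\ref{assump:cont} are in force throughout, everything reduces to controlling the matrices $\mathbf{A}_{\bbeta}$. Write $M:=\sup_t b''(t)<\infty$ and $\omega(t):=\sup_{|s|\le t}|b''(s)-b''(0)|$, so $\omega(t)\to 0$ as $t\downarrow 0$ by Assumption~\ref{assump:cont}. The recurring device is a truncation of the linear predictor: for any $\bbeta\in[-1,1]^p$ and $t>0$, split $[n]=S^{<}_{\bbeta}(t)\cup S^{>}_{\bbeta}(t)$ with $S^{>}_{\bbeta}(t):=\{i:|\langle\mathbf{x}_i,\bbeta\rangle|>t\}$; Chebyshev together with $\sum_i\langle\mathbf{x}_i,\bbeta\rangle^2\le\|\mathbf{X}^\top\mathbf{X}\|_2\,\|\bbeta\|_2^2=O(p)$ gives $|S^{>}_{\bbeta}(t)|=O(p/t^2)$, while on $S^{<}_{\bbeta}(t)$ the diagonal matrix $\mathbf{D}_{\bbeta}$ is within $\omega(t)$ in operator norm of $b''(0)\,\mathbf{I}$.

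\textbf{Part (i).} Write $E:=\mathbf{D}_{\bbeta}-b''(0)\,\mathbf{I}$, so $\mathbf{A}_{\bbeta}-\mathbf{A}_{\mathbf{0}}=\mathbf{X}^\top E\mathbf{X}-\mathrm{diag}(\mathbf{X}^\top E\mathbf{X})$. Decomposing $E$ along the split, the $S^{<}$-part of $\mathbf{X}^\top E\mathbf{X}$ has operator norm $O(\omega(t))$ (using $\|\mathbf{X}^\top\mathbf{X}\|_2=O(1)$), while the $S^{>}$-part is $\sum_{i\in S^{>}_{\bbeta}(t)}E_{ii}\mathbf{x}_i\mathbf{x}_i^\top$ with $|E_{ii}|\le 2M$; splitting into the positive and negative parts of the $E_{ii}$ and using positive semidefiniteness, its operator norm is $O\big(\big\|\sum_{i\in S^{>}_{\bbeta}(t)}\mathbf{x}_i\mathbf{x}_i^\top\big\|_{\mathrm{op}}\big)$ and, by the rank bound, its squared Frobenius norm is $O\big(|S^{>}_{\bbeta}(t)|\,\big\|\sum_{i\in S^{>}_{\bbeta}(t)}\mathbf{x}_i\mathbf{x}_i^\top\big\|_{\mathrm{op}}^2\big)$. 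For fixed $t$ we have $|S^{>}_{\bbeta}(t)|\le Cp$ with $C=C(t)$, so the extra hypothesis of~(i) forces $\sup_{\bbeta}\big\|\sum_{i\in S^{>}_{\bbeta}(t)}\mathbf{x}_i\mathbf{x}_i^\top\big\|_{\mathrm{op}}\to 0$; letting $p\to\infty$ and then $t\downarrow0$ yields $\sup_{\bbeta}\|\mathbf{A}_{\bbeta}-\mathbf{A}_{\mathbf{0}}\|_{\mathrm{op}}\to 0$ and, using $\mathrm{Tr}(\mathbf{A}_{\mathbf{0}}^2)=o(p)$, $\sup_{\bbeta}\mathrm{Tr}(\mathbf{A}_{\bbeta}^2)=o(p)$. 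Hence the one-element family $\{\mathbf{A}_{\mathbf{0}}\}$ is an $\varepsilon$-net in operator norm for every fixed $\varepsilon>0$ once $p$ is large, verifying both conditions of Theorem~\ref{thm:covering_suff}.

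\textbf{Part (ii).} This is reduced to part~(i) by checking that the latter's deterministic hypotheses hold with probability tending to one under the Gaussian design. Standard non-asymptotic bounds give $\|\mathbf{X}^\top\mathbf{X}\|_2\le\|\bSigma_p\|_{\mathrm{op}}(1+\sqrt{p/n})^2+o(1)=O(1)$; the entries are $\mathcal{N}(0,O(1/n))$, so $\mathbb{E}\big[\sum_{i,j}x_{ij}^2\mathbf{1}(|x_{ij}|>\delta)\big]=np\cdot e^{-\Omega(n)}=o(1)$, giving Assumption~\ref{assump:design}(ii); concentration of $\|\mathbf{x}_i\|_2^2$ about $\mathrm{Tr}(\bSigma_p)/n=O(p/n)$ plus an order-statistics bound gives $\max_{|S|\le Cp}\sum_{i\in S}\|\mathbf{x}_i\|_2^2=O(Cp^2/n)=o(p)$ for each fixed $C$, i.e.\ Assumption~\ref{assump:design}(iii); and a union bound over the $\binom{n}{\le Cp}$ subsets, with the Gaussian estimate $\big\|\sum_{i\in S}\mathbf{x}_i\mathbf{x}_i^\top\big\|_{\mathrm{op}}\lesssim\|\bSigma_p\|_{\mathrm{op}}\,n^{-1}(\sqrt{|S|}+\sqrt p+s)^2$ with $s\asymp\sqrt{|S|\log(n/|S|)}$, yields $\max_{|S|\le Cp}\big\|\sum_{i\in S}\mathbf{x}_i\mathbf{x}_i^\top\big\|_{\mathrm{op}}=O\big((p/n)\log(n/p)\big)\to 0$ since $n\gg p$. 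Finally $\mathbb{E}[\mathrm{Tr}(\mathbf{A}_{\mathbf{0}}^2)]=b''(0)^2\big(\mathrm{Tr}(\bSigma_{p,\mathrm{off}}^2)+O(p^2/n)\big)=o(p)$ and this quantity concentrates, so $\mathrm{Tr}(\mathbf{A}_{\mathbf{0}}^2)=o(p)$ with high probability; on the intersection of these events part~(i) applies.

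\textbf{Part (iii), and the main obstacle.} The key structural remark is that when $n=O(p)$, taking $S=[n]$ in Assumption~\ref{assump:design}(iii) forces $\mathrm{Tr}(\mathbf{X}^\top\mathbf{X})=o(p)$; hence, for each fixed $t>0$, $\mathbf{X}$ has at most $\mathrm{Tr}(\mathbf{X}^\top\mathbf{X})/t^2=o(p)$ singular values exceeding $t$. Fix $\varepsilon>0$, let $\mathbf{X}^{(t)}$ be the truncation of $\mathbf{X}$ to singular values $>t$ (so $\mathrm{rank}(\mathbf{X}^{(t)})=\rho_t=o(p)$ and $\|\mathbf{X}-\mathbf{X}^{(t)}\|_{\mathrm{op}}\le t$), and decompose, for $\bbeta,\bgamma\in[-1,1]^p$,
\[
\mathbf{A}_{\bbeta}\bgamma=\mathbf{X}^\top\!\big(\mathbf{d}_{\bbeta}\odot(\mathbf{X}\bgamma)\big)-\mathrm{diag}(\mathbf{X}^\top\mathbf{D}_{\bbeta}\mathbf{X})\,\bgamma,\qquad \mathbf{d}_{\bbeta}:=\big(b''(\langle\mathbf{x}_i,\bbeta\rangle)\big)_{i=1}^n .
\]
The diagonal term has $\ell_1$-norm $\le M\,\mathrm{Tr}(\mathbf{X}^\top\mathbf{X})=o(p)$, and replacing $\mathbf{X}^\top$ by $(\mathbf{X}^{(t)})^\top$ in the first term changes it by an $\ell_1$-error $\le\sqrt p\,\|\mathbf{X}-\mathbf{X}^{(t)}\|_{\mathrm{op}}\,\|\mathbf{d}_{\bbeta}\odot(\mathbf{X}\bgamma)\|_2=O(tp)$; so up to an $\ell_1$-error $\le p\varepsilon$ (for $t$ small, $p$ large) it suffices to net $\big\{(\mathbf{X}^{(t)})^\top(\mathbf{d}_{\bbeta}\odot(\mathbf{X}\bgamma)):\bbeta,\bgamma\in[-1,1]^p\big\}$. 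This set lies in the $\rho_t$-dimensional space $\mathrm{row}(\mathbf{X}^{(t)})$ and, since $\|\mathbf{d}_{\bbeta}\odot(\mathbf{X}\bgamma)\|_2\le M\|\mathbf{X}\|_{\mathrm{op}}\sqrt p=O(\sqrt p)$, inside an $\ell_2$-ball of radius $O(\sqrt p)$ there; an $\ell_2$-net of that ball at scale $p\varepsilon/\sqrt{\rho_t}$ is a $p\varepsilon$-net in $\ell_1$ of size at most $(C/\varepsilon)^{\rho_t}=e^{o(p)}$ (using $\rho_t\le p$), which verifies the hypothesis of Theorem~\ref{thm:covering}. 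The main obstacle is exactly this covering: in the sub-proportional regimes~(i)--(ii) the uniform operator-norm control over subsets of rows collapses $\{\mathbf{A}_{\bbeta}\}$ to a single point up to $o(1)$ error, but in the proportional regime~(iii) no such collapse holds, and one must instead exploit $\mathrm{Tr}(\mathbf{X}^\top\mathbf{X})=o(p)$ to see that $\mathbf{X}$ is effectively of rank $o(p)$; crucially, a naive operator-norm covering of rank-$o(p)$ matrices would only give $e^{o(p^2)}$, so it is essential to use the $\ell_1$-covering of $\mathcal{A}_p=\{\mathbf{A}_{\bbeta}\bgamma\}$ from Theorem~\ref{thm:covering}, for which the relevant ambient dimension is $\rho_t=o(p)$ rather than $n$.
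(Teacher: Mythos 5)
Your parts~(i) and~(ii) follow essentially the same route as the paper: for~(i), the paper also splits $\mathbf{D}_{\bbeta}$ along the threshold set $\mathcal{S}(\bbeta,\varepsilon)=\{i:|\langle\mathbf{x}_i,\bbeta\rangle|>\varepsilon\}$, controls the small-predictor part via a perturbation lemma (Lemma~\ref{lem:perturbation}) and the large-predictor part via the uniform operator-norm hypothesis, concluding $\sup_{\bbeta}\|\mathbf{A}_{\bbeta}-b''(0)\mathbf{A}\|_{\mathrm{op}}=o(1)$ so that a singleton is a sufficient operator-norm net; for~(ii) both you and the paper verify the deterministic hypotheses with high probability using Gaussian concentration, Bernstein, and a union bound over $\binom{n}{\le Cp}$ subsets. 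For part~(iii), however, you take a genuinely different route. The paper stays within Theorem~\ref{thm:covering_suff}, building an operator-norm net of $\{\mathbf{A}_{\bbeta}\}$ by first putting an $\ell_2$-net on $\{\mathbf{X}\bbeta\}$ and handling the indices where $\langle\mathbf{x}_i,\bbeta\rangle$ is far from the net point via a separate scalar-grid net over at most $p\varepsilon$ coordinates. You instead verify Theorem~\ref{thm:covering} directly, via a singular-value truncation of $\mathbf{X}$: the key observation $\mathrm{Tr}(\mathbf{X}^\top\mathbf{X})=o(p)$ (which the paper also uses, to show $\sup_{\bbeta}\mathrm{Tr}(\mathbf{A}_{\bbeta}^2)=o(p)$) is leveraged by you to reduce $\{\mathbf{A}_{\bbeta}\bgamma\}$ to a ball of radius $O(\sqrt p)$ inside the $o(p)$-dimensional row space of $\mathbf{X}^{(t)}$, which can then be netted directly. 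Both arguments are valid; yours is arguably cleaner in that it avoids the two-level net construction over subsets, while the paper's keeps the operator-norm net machinery of Theorem~\ref{thm:covering_suff} and thus unifies the three cases under one theorem.

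One small error in part~(iii): you state that an $\ell_2$-net at scale $p\varepsilon/\sqrt{\rho_t}$ is a $p\varepsilon$-net in $\ell_1$. This would require $\|\mathbf{w}\|_1\le\sqrt{\rho_t}\,\|\mathbf{w}\|_2$, which holds for vectors supported on $\rho_t$ coordinates but not for vectors in a general $\rho_t$-dimensional subspace of $\mathbb{R}^p$ (e.g.\ the span of $\mathbf{1}_p$ is one-dimensional yet has $\ell_1/\ell_2$ ratio $\sqrt p$). The correct scale is $\sqrt{p}\,\varepsilon$. This does not change your conclusion: a ball of radius $O(\sqrt p)$ in a $\rho_t$-dimensional space admits an $\ell_2$-net at scale $\sqrt p\,\varepsilon$ of cardinality $(C/\varepsilon)^{\rho_t}=e^{o(p)}$, which is what you need.

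Two further minor remarks. First, in part~(i) the squared-Frobenius bound via rank times operator-norm-squared is not actually used; the cleaner deduction, which the paper also makes (``eigenvalue interlacing''), is that $\sup_{\bbeta}\|\mathbf{A}_{\bbeta}-\mathbf{A}_{\mathbf{0}}\|_{\mathrm{op}}\to0$ implies $\|\mathbf{A}_{\bbeta}-\mathbf{A}_{\mathbf{0}}\|_F^2\le p\|\mathbf{A}_{\bbeta}-\mathbf{A}_{\mathbf{0}}\|_{\mathrm{op}}^2=o(p)$, hence $\sup_{\bbeta}\mathrm{Tr}(\mathbf{A}_{\bbeta}^2)=o(p)$ by triangle inequality in Frobenius norm. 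Second, your aside that an operator-norm covering ``would only give $e^{o(p^2)}$'' refers to a naive covering of all rank-$o(p)$ matrices; the paper's covering is parametrized by $\bbeta$ and does achieve $e^{o(p)}$, so this is not an obstruction to the paper's approach, merely a motivation for your different one.
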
 


\noindent 
We now turn to the practical implementation of the NMF approximation. 
In practice, one hopes to approximate the posterior distribution of interest by a product distribution---to this end, one employs the lower bound \eqref{eq:ELBO}, and hopes to compute an approximate optimizer of the RHS. In the context of the linear model, the hamiltonian $H(\bbeta)$ is a quadratic function of $\bbeta$---consequently, one can explicitly identify the class of product distributions $Q= \prod_{i=1}^{p} Q_i$ which maximize the RHS. In turn, this enables one to develop fast numerical schemes to compute the tightest lower bound to $\log \mathcal{Z}_p$ (see the analysis in \cite{mukherjee2021variational} for additional details in the context of the linear model). This approach underlies popular Variational Inference (VI) schemes employed in diverse data applications (see e.g. the recent survey \cite{blei2017variational}). However, this key property is lost as soon as one moves beyond the linear model setting. This is a key barrier in the development of Variational inference algorithms for GLMs such as logistic regression. This issue has been identified as a key challenge in VI, and has been emphasized repeatedly in the literature \cite{blei2017variational,paisley2012variational,wang2013variational}. Some algorithmic solutions have been developed for special cases (e.g. for logistic regression with gaussian prior \cite{jaakkola2000bayesian}), but general solutions are absent in the literature. 


Our next result directly addresses this challenge. Under Assumptions \ref{assump:prior}, \ref{assump:cont} and \ref{assump:design}, we identify novel classes of parametric sub-families such that these sub-families contain a sequence of approximate optimizers of \eqref{eq:ELBO}. In turn, this directly facilitates the development of new VI algorithms---we discuss the state-of-the-art, and our contributions in this direction in Section \ref{sec:algo}.  

\begin{definition}[Exponential tilting]
\label{def:exp_tiltting}
For any $\gamma := (\gamma_1, \gamma_2) \in \mathbb{R} \times [0,\infty)$ and probability distribution $\pi$ on $[-1,1]$, we define the tilted measure $\pi_{\gamma}$ as
\begin{align*}
    \frac{\mathrm{d} \pi_{\gamma}}{\dif \pi}(x) := \exp \left ( \gamma_1 x - b''(0) \frac{\gamma_2}{2}x^2 - c_{\pi}(\gamma)\right ),
    \text{    }
    c_{\pi}(\gamma) := \log \int_{[-1,1]}\exp \left ( \gamma_1 x - b''(0) \frac{\gamma_2}{2}x^2 \right ) \mathrm{d} \pi(x).
\end{align*}
For any $d>0$, the function $c_\pi(\cdot,d)$ is strictly convex, and so the function $\dot{c}_\pi(\cdot,d)$ is strictly increasing. Here, and everywhere else, by $\dot{c}_\pi$ we mean the derivative of $c_\pi(\cdot,\gamma_2)$. Let  $h: [-1,1] \times [0, \infty) \to \mathbb{R}$ denote the inverse of the function $\dot{c}_\pi(\cdot,d)$, defined by 
\begin{equation*}
    \begin{aligned}
        u =\dot{c}_{\pi}(h(u,d),d)= \mathbb{E}_{X\sim \pi_{(h(u,d),d)}}[X].  
    \end{aligned}
\end{equation*}
\end{definition}

\noindent
We now turn to our next main result. 

\begin{theorem}[Algorithmic implications]
\label{thm:alg_glm}
\begin{itemize}
\item[(i)]
Suppose that the assumptions of Theorem \ref{thm:covering} hold, and assume further that
$$\| \mathbf{X}^{\top} (\mathbf{y} - b'(0) \boldsymbol{1})\|^2 =O(p).$$  Then we have
\begin{align}
\label{eq:alg_1}
    \log \mathcal{Z}_p -  \sup_{Q \in \Gamma} \left\{ \mathbb{E}_{\bsigma \sim Q} \left [  H (\bsigma) \right ] -\operatorname{D}_{\text{KL}}(Q \| \pi_p )\right\}   = o(p),
\end{align}
where $\Gamma:=\Big \{ \prod_{j=1}^p \pi_{( \gamma_j,d_j  ) } : \gamma \in \mathbb{R}^p \Big \}$ is the collection of all exponential tilts of the prior.

\item[(ii)]  Under the same assumptions as part (i) above, 
we have
    \begin{align}
    \label{eq:alg_3}
        \log \mathcal{Z}_p -  \sup_{\mathbf{u} \in [-1,1]^p } &\Big \{  \mathbb{E}_{\bsigma \sim Q_{\mathbf{u}} }[ H(\bsigma)] - D_{\mathrm{KL}}(Q_{\mathbf{u}} \| \pi_p) \Big) \Big\} = o(p),  
    \end{align}
    where $Q_{\mathbf{u}} = \prod_{j=1}^p \pi_{(h(u_j,d_j),d_j)}$. 
\end{itemize}
\end{theorem}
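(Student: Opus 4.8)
\emph{Proof sketch.} The bound ``$\le$'' in both \eqref{eq:alg_1} and \eqref{eq:alg_3} is immediate from \eqref{eq:ELBO1}, since $\Gamma\subseteq\mathcal P$ and each $Q_{\mathbf u}\in\mathcal P$. All the work is the matching lower bound, and the plan is to \emph{round} a near-optimizer of the unrestricted NMF functional $F(Q):=\mathbb E_{\bsigma\sim Q}[H(\bsigma)]-\dkl(Q\|\pi_p)$ onto $\Gamma$ while losing only $o(p)$. By Theorem~\ref{thm:covering} I would pick $Q^{\star}=\prod_j Q_j^{\star}\in\mathcal P$ with $F(Q^{\star})\ge\log\mathcal Z_p-o(p)$ and run $p$ sweeps of coordinate ascent: replacing $Q_j$ by its best response $Q_j^{\mathrm{br}}\propto\exp(\mathbb E_{Q_{-j}}[H\mid\sigma_j])\,\dif\pi(\sigma_j)$ never decreases $F$. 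The structural observation driving everything is that $\mathbb E_{Q_{-j}}[H\mid\sigma_j]$, as a function of the \emph{single} bounded variable $\sigma_j\in[-1,1]$, is quadratic up to a design-controlled error: since $\theta_i=x_{ij}\sigma_j+\theta_i^{-j}$ with $\theta_i^{-j}$ independent of $\sigma_j$, all $\sigma_j$-dependence sits in the terms $-b(x_{ij}\sigma_j+\theta_i^{-j})$, whose Taylor error past second order is bounded, via the integral remainder and the \emph{uniform continuity} of $b''$ (Assumption~\ref{assump:cont})---this is exactly where one avoids any bound on $b'''$---by $\tfrac12(\sigma_j-a)^2\sum_i x_{ij}^2\,\omega(2|x_{ij}|)$, with $\omega$ the modulus of continuity of $b''$. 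Hence $Q_j^{\mathrm{br}}$ equals a quadratic tilt $\pi_{(\tilde\gamma_j,\tilde d_j)}$ up to a multiplicative factor $e^{O(\varepsilon_j)}$, where $\varepsilon_j:=\sum_i x_{ij}^2\,\omega(2|x_{ij}|)$, $\tilde\gamma_j\in\mathbb R$, and $\tilde d_j:=\sum_i x_{ij}^2\,\mathbb E_{Q_{-j}}[b''(\theta_i)]\ge 0$.

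After the sweeps one has $Q^{(p)}$ with $F(Q^{(p)})\ge F(Q^{\star})\ge\log\mathcal Z_p-o(p)$, each marginal an ``$\varepsilon_j$-approximate'' quadratic tilt; replacing each by the exact tilt $\pi_{(\hat\gamma_j,d_j)}\in\Gamma$ with the prescribed quadratic coefficient (design-determined, essentially $(\mathbf X^{\top}\mathbf X)_{jj}$ modulo the $b''(0)$-normalization of Definition~\ref{def:exp_tiltting}) perturbs $F$ by at most $\sum_j(\varepsilon_j+|\tilde d_j-d_j|)\cdot O(1+|\hat\gamma_j|+d_j)$, the last factor being a crude Lipschitz constant for the dependence of $F$ on the $j$-th factor (using $|\sigma_j|\le1$ and the explicit form of $\dkl(\pi_{\gamma}\|\pi)$). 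This total is $o(p)$ because: (a) $\sum_j\varepsilon_j=\sum_{i,j}x_{ij}^2\,\omega(2|x_{ij}|)=o(p)$, splitting at $|x_{ij}|\le\delta$ (giving $\omega(2\delta)\,O(p)\to0$ after $\delta\downarrow0$) versus $|x_{ij}|>\delta$ (giving $o(p)$ by Assumption~\ref{assump:design}(ii)); (b) $\sum_j|\tilde d_j-d_j|\le\sum_i\|\mathbf x_i\|_2^2\big(\mathbb E_{Q_{-j}}[\omega(|\theta_i-\mathbb E\theta_i|)]+\omega(|\langle\mathbf x_i,\mathbf m\rangle|)\big)=o(p)$, using the sub-Gaussian (Hoeffding) concentration of $\theta_i$---valid because $Q_{-j}$ is a product measure on a bounded set, so $\theta_i$ concentrates at scale $\|\mathbf x_i\|_2$---together with uniform continuity of $b''$ for the first term, and for the second term the fact that $\{i:|\langle\mathbf x_i,\mathbf m\rangle|>\delta\}$ has cardinality $\le\|\mathbf X\mathbf m\|_2^2/\delta^2=O(p)$, so Assumption~\ref{assump:design}(iii) forces its $\|\cdot\|_2^2$-mass to be $o(p)$; (c) $\sum_j d_j=\mathrm{Tr}(\mathbf X^{\top}\mathbf X)=O(p)$, $d_j,\varepsilon_j=O(1)$ (Assumption~\ref{assump:design}(i)), and $\sum_j\hat\gamma_j^2=O(p)$---this last is precisely where the extra hypothesis $\|\mathbf X^{\top}(\mathbf y-b'(0)\boldsymbol{1})\|_2^2=O(p)$ enters, since the linear tilt coefficient equals $\mathbf X^{\top}(\mathbf y-\mathbb E_Q[b'(\boldsymbol{\theta})])$ up to lower-order corrections and $\|\mathbf X^{\top}(\mathbb E_Q[b'(\boldsymbol{\theta})]-b'(0)\boldsymbol{1})\|_2^2\le\|\mathbf X^{\top}\mathbf X\|_2\,\|b''\|_\infty^2(\|\mathbf X\mathbf m\|_2^2+\sum_i\mathrm{Var}(\theta_i))=O(p)$ by $\mathbf m\in[-1,1]^p$ and boundedness of the marginal variances---after which Cauchy--Schwarz turns every cross term ($\sum_j\varepsilon_j|\hat\gamma_j|$, $\sum_j\varepsilon_j d_j$, and the analogous ones with $|\tilde d_j-d_j|$) into $o(p)$. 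The resulting $\prod_j\pi_{(\hat\gamma_j,d_j)}\in\Gamma$ then gives \eqref{eq:alg_1}.

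Part~(ii) is essentially a reparametrization of part~(i). With the quadratic coefficients $d_j$ fixed, $\gamma_j\mapsto\mathbb E_{\pi_{(\gamma_j,d_j)}}[\sigma_j]=\dot c_\pi(\gamma_j,d_j)$ is a strictly increasing bijection onto the open convex hull of $\mathrm{supp}(\pi)$ with inverse $h(\cdot,d_j)$ (Definition~\ref{def:exp_tiltting}); hence $\Gamma=\{Q_{\mathbf u}\}$ as $\mathbf u$ ranges over the corresponding product of open intervals, and \eqref{eq:alg_3} follows from \eqref{eq:alg_1} by rewriting the supremum in this mean parametrization (boundary/degenerate values of $\mathbf u$ handled by continuity). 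Conceptually this is the natural parametrization because $\mathrm{argmin}_{Q:\,\mathbb E_Q[\sigma]=u}\{\tfrac12 b''(0)d\,\mathrm{Var}(Q)+\dkl(Q\|\pi)\}$ is exactly $\pi_{(h(u,d),d)}$, which is also why each coordinate best-response concentrates near such a tilt.

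The main obstacle is the uniform, quantitative control in the second step: $\varepsilon_j$, $|\tilde d_j-d_j|$, and $\hat\gamma_j$ must be dominated \emph{only} by the design quantities in Assumptions~\ref{assump:design}(i)--(iii) and by $\|\mathbf X^{\top}(\mathbf y-b'(0)\boldsymbol{1})\|_2^2$---with no smoothness of $b$ beyond uniform continuity of $b''$---and these estimates must hold for \emph{every} $\mathbf m\in[-1,1]^p$, hence all along the coordinate-ascent trajectory. The genuinely delicate points I expect are the Jensen-gap bound in (b), which relies on the sub-Gaussian concentration of $\theta_i$ under a bounded product measure rather than a crude second-moment bound, and carrying the iterated $\delta\downarrow0$ limits through in the correct order.
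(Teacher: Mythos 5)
Your proposal is correct in its essentials but follows a genuinely different route from the paper, and the comparison is instructive. The paper's proof of part~(i) is a direct two-step import: it first invokes Austin's result (the display on p.~23 of \cite{austin2019structure}) that the family of \emph{discrete-gradient tilts} $\mu_{\bbeta}$, $\bbeta\in[-1,1]^p$, already achieves the NMF supremum up to $o(p)$, and then shows each $\mu_{\bbeta}$ is $o(p)$-close in the ELBO to the quadratic tilt $\pi_{(V(\bbeta),\mathbf{d})}$, the KL-versus-Wasserstein transfer being handled by Marton's transportation-cost inequality and the uniform gradient bound $\sup_{\bbeta}\|\nabla H(\bbeta)\|^2 = O(p)$ furnished by the extra hypothesis $\|\mathbf{X}^\top(\mathbf{y}-b'(0)\boldsymbol{1})\|^2=O(p)$. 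You instead start from an \emph{arbitrary} near-optimal product measure guaranteed by Theorem~\ref{thm:covering}, run a CAVI sweep, and use the observation that each one-dimensional best response is (up to an $e^{O(\varepsilon_j)}$ multiplicative factor, $\varepsilon_j=\sum_i x_{ij}^2\omega(2|x_{ij}|)$) a quadratic tilt, then round the quadratic coefficient from $\tilde d_j=\sum_i x_{ij}^2\mathbb{E}_{Q_{-j}}[b''(\theta_i)]$ to $b''(0)d_j$. Both routes need the same underlying ``design smallness'' estimates; the paper's is shorter because it outsources the hard work to Austin's theorem, while yours is more self-contained and algorithmically transparent (it literally exhibits the CAVI trajectory landing in $\Gamma$), at the price of a longer chain of $o(p)$ bookkeeping.

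Two technical points you gloss over, though both are repairable. First, ``Cauchy--Schwarz turns every cross term into $o(p)$'' is not literally Cauchy--Schwarz: with $\sum_j\varepsilon_j=o(p)$, $\varepsilon_j=O(1)$, and $\sum_j\hat\gamma_j^2=O(p)$, direct Cauchy--Schwarz on $\sum_j\varepsilon_j|\hat\gamma_j|$ only gives $O(p)$. You need a threshold split: $\sum_{|\hat\gamma_j|\le M}\varepsilon_j|\hat\gamma_j|\le M\sum_j\varepsilon_j$ and $\sum_{|\hat\gamma_j|>M}\varepsilon_j|\hat\gamma_j|\le (\max_j\varepsilon_j)\,M^{-1}\sum_j\hat\gamma_j^2$, then optimize $M\sim(p/\sum_j\varepsilon_j)^{1/2}\to\infty$ to get $O(\sqrt{p\sum_j\varepsilon_j})=o(p)$. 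Second, the sub-Gaussian bound $\mathbb{E}[\omega(|\theta_i-\mathbb{E}\theta_i|)]$ is only small when $\|\mathbf{x}_i\|_2$ is small; you must split rows at $\|\mathbf{x}_i\|_2\le\delta$ (where Hoeffding and uniform continuity win) versus $\|\mathbf{x}_i\|_2>\delta$ (where the set has cardinality $O(p/\delta^2)$ and Assumption~\ref{assump:design}(iii) kills its $\|\cdot\|_2^2$-mass), which is the same two-scale device you already apply to the $\omega(|\langle\mathbf{x}_i,\mathbf{m}\rangle|)$ term but did not spell out for the fluctuation term. With these in place your argument closes.
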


\noindent 
In general, the high-dimensional posteriors under consideration have subtle dependencies, and understanding their probabilistic properties (e.g. the low dimensional marginals) can be challenging. Our next result illustrates the usefulness of the NMF approximation in this regard---in special cases, the NMF approximation yields explicit characterizations of the low-dimensional marginals of the high-dimensional posterior distribution. We expect such guarantees to be particularly useful for downstream Bayesian inference---this has been illustrated in the context of Bayes linear regression in \cite{mukherjee2021variational}; we expect similar applications in the setting of GLMs.

\begin{definition}
\label{def:well_separated} 
We say that there exists a well-separated optimizer $\mathbf{u}^* \in [-1,1]^p$ if for all $\delta>0$, there exists $\varepsilon>0$ such that 
\begin{align}
& \sup_{\mathbf{u} \in [-1,1]^p:  \| \mathbf{u} - \mathbf{u}^*\|^2 > p\delta } \bigg \{  \mathbb{E}_{\bsigma \sim Q_{\mathbf{u}} }[ H(\bsigma)] - D_{\mathrm{KL}}(Q_{\mathbf{u}} \| \pi_p) \bigg\}  
\leq&  \sup_{\mathbf{u} \in [-1,1]^p } \bigg (  \mathbb{E}_{\bsigma \sim Q_{\mathbf{u}} }[H(\bsigma)] - D_{\mathrm{KL}}(Q_{\mathbf{u}} \| \pi_p) \bigg)  - p\varepsilon. \nonumber 
\end{align} 
\end{definition} 

Finally, we will require the notion of Wasserstein or transport distance (see e.g. \cite{panaretos2019wasserstein} for an in-depth introduction). 

\begin{definition}[Wasserstein distance] 
\label{def:wasserstein}
Let $\mu$, $\nu$ be two probability distributions on $[-1,1]^{p}$. Define 
\begin{align}
d_{W_1}(\mu, \nu) = \inf_{\Gamma} \Big\{  \| \mathbf{X} - \mathbf{Y} \|_1 : (X,Y) \sim \Gamma \Big\}, \nonumber 
\end{align} 
where $\Gamma$ represents a coupling of $(\mu, \nu)$. 
\end{definition}

\begin{theorem}[Posterior structure] 
\label{thm:structure} 
Assume that the conditions of Theorem \ref{thm:covering} are satisfied. Further, assume that there exists a well-separated optimizer $\mathbf{u}^*$. Define $\tau_j^* = h(u_j^*,d_j)$. Then $$d_{W_1}\Big(\mu, \prod_{j=1}^{p} \pi_{(\tau_j^*,d_j)} \Big) =o(p).$$
\end{theorem}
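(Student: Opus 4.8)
The plan is to upgrade the variational optimality statement of Theorem~\ref{thm:alg_glm}(ii) into a statement about the posterior $\mu$ itself, by exploiting the well-separatedness hypothesis to localize the posterior near the product measure $Q_{\mathbf{u}^*} = \prod_j \pi_{(\tau_j^*, d_j)}$. The key object is the functional $F(\mathbf{u}) := \mathbb{E}_{\bsigma \sim Q_{\mathbf{u}}}[H(\bsigma)] - D_{\mathrm{KL}}(Q_{\mathbf{u}} \| \pi_p)$, and we know from Theorem~\ref{thm:alg_glm}(ii) that $\log \mathcal{Z}_p = \sup_{\mathbf{u}} F(\mathbf{u}) + o(p)$, with the sup attained up to $o(p)$ near $\mathbf{u}^*$ by the well-separated optimizer assumption. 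First I would use the identity \eqref{eq:ELBO1}, which gives $\log \mathcal{Z}_p = F(\mathbf{u}) + D_{\mathrm{KL}}(Q_{\mathbf{u}} \| \mu)$ for any $\mathbf{u}$; combining this with the near-optimality of $F$ at $\mathbf{u}^*$ yields $D_{\mathrm{KL}}(Q_{\mathbf{u}^*} \| \mu) = o(p)$. So the product measure $Q_{\mathbf{u}^*}$ is already KL-close to the posterior.

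Next, I would convert this $o(p)$ KL bound into an $o(p)$ Wasserstein-$1$ bound. The obstruction is that KL does not dominate $W_1$ on the large cube $[-1,1]^p$ in general; a bound of the form $W_1^2 \lesssim p \cdot D_{\mathrm{KL}}$ via transportation-cost (Talagrand $T_1$) inequalities would only give $W_1 = o(p)$ if $D_{\mathrm{KL}} = o(p)$ — which is exactly what we have. Concretely, since $\mu$ and $Q_{\mathbf{u}^*}$ both live on $[-1,1]^p$, each coordinate is bounded, and a product-measure reference (here $\pi_p$, or directly a tensorization argument) satisfies a dimension-free $T_1$ inequality with a constant independent of $p$; tensorizing over the $p$ coordinates gives $d_{W_1}(\mu, Q_{\mathbf{u}^*})^2 \leq C p \, D_{\mathrm{KL}}(\mu \| Q_{\mathbf{u}^*})$. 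The mild wrinkle is that the standard $T_1$ bound controls $D_{\mathrm{KL}}(\mu \| Q_{\mathbf{u}^*})$ rather than $D_{\mathrm{KL}}(Q_{\mathbf{u}^*} \| \mu)$; one resolves this either by a Pinsker-type argument on each marginal combined with boundedness, or by noting that $Q_{\mathbf{u}^*}$ is a log-bounded (quadratic) tilt of $\pi$ on $[-1,1]$ so the two divergences are comparable up to constants, or simply by running the whole argument with $\inf_Q D_{\mathrm{KL}}(Q \| \mu)$ replaced appropriately. In any case one lands at $d_{W_1}(\mu, Q_{\mathbf{u}^*}) \leq C \sqrt{p \cdot o(p)} = o(p)$, which is the claimed bound.

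The step I expect to be the main obstacle is precisely the passage from $D_{\mathrm{KL}}(Q_{\mathbf{u}^*} \| \mu) = o(p)$ to the Wasserstein statement, and in particular making the transportation inequality apply to $\mu$ (which is not a product measure and not obviously log-concave). The clean route is to avoid a $T_1$ inequality for $\mu$ directly: instead, bound $d_{W_1}(\mu, Q_{\mathbf{u}^*}) \leq d_{W_1}(\mu, Q^{\mathrm{NMF}}) + d_{W_1}(Q^{\mathrm{NMF}}, Q_{\mathbf{u}^*})$ where $Q^{\mathrm{NMF}}$ is an $o(p)$-approximate product optimizer of \eqref{eq:ELBO}; here the first term is handled because $Q^{\mathrm{NMF}}$ and $\mu$ share a small KL gap via \eqref{eq:ELBO1} and a coupling can be built marginal-by-marginal exploiting the fact that, on $[-1,1]$, $W_1$ between two marginals is at most their total variation (diameter $2$), which Pinsker bounds by $\sqrt{D_{\mathrm{KL}}/2}$ — and summing over coordinates with Cauchy–Schwarz gives $o(p)$. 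The second term is comparably small because well-separatedness forces any near-optimal $Q^{\mathrm{NMF}}$ to have $\sum_j \|\mathbb{E}_{Q_j^{\mathrm{NMF}}}[\sigma_j] - \tau_j^*\|^2 = o(p)$, and since both $Q_j^{\mathrm{NMF}}$ and $\pi_{(\tau_j^*,d_j)}$ are quadratic tilts of $\pi$ matching (approximately) in mean, a perturbation/Lipschitz estimate on the map $\mathbf{u} \mapsto Q_{\mathbf{u}}$ gives the needed control. Assembling the three pieces yields the theorem.
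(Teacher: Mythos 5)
The first stage of your plan is sound: the definition of a well-separated optimizer does force $\mathbf{u}^*$ to attain (up to $o(p)$) the supremum of $F(\mathbf{u}) := \mathbb{E}_{Q_{\mathbf{u}}}[H] - D_{\mathrm{KL}}(Q_{\mathbf{u}}\|\pi_p)$, and combined with Theorem~\ref{thm:alg_glm}(ii) and the identity \eqref{eq:ELBO1} this gives $D_{\mathrm{KL}}(Q_{\mathbf{u}^*}\|\mu) = o(p)$. The gap is in the second stage, the passage from this reverse KL bound to the $W_1$ bound, and none of the three routes you sketch actually closes it. (a) Marton's tensorized transportation inequality controls $d_{W_1}(\nu, Q)^2 \lesssim p\, D_{\mathrm{KL}}(\nu\|Q)$ when the \emph{reference} measure $Q$ is a product; we only control $D_{\mathrm{KL}}(Q\|\mu)$, and $\mu$ is not a product, so the inequality does not apply in the needed direction. (b) The ``marginal-by-marginal Pinsker plus Cauchy--Schwarz'' route fails twice over: first, $D_{\mathrm{KL}}(Q\|\mu) = o(p)$ does not control $\sum_j D_{\mathrm{KL}}(Q_j\|\mu_j)$ when $\mu$ is not a product (the discrepancy is, up to an approximation error, the total correlation $D_{\mathrm{KL}}(\mu\|\prod_j\mu_j)$, which can be $\Theta(p)$); second, and more fundamentally, even if you could make all marginals close, a coordinatewise coupling of $(\mu, Q)$ does \emph{not} yield a valid joint coupling when $\mu$ is non-product, so $d_{W_1}(\mu,Q)\le \sum_j d_{W_1}(\mu_j,Q_j)$ is simply false in general. (c) Replacing $Q_{\mathbf{u}^*}$ by an approximate NMF optimizer $Q^{\mathrm{NMF}}$ inherits exactly the same problem in the term $d_{W_1}(\mu, Q^{\mathrm{NMF}})$.

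That the inference ``$D_{\mathrm{KL}}(Q\|\mu) = o(p)$ with $Q$ a product $\Rightarrow d_{W_1}(\mu,Q) = o(p)$'' is genuinely false can be seen on the cube itself: take $\mu = \tfrac12\delta_{\mathbf{1}} + \tfrac12\delta_{-\mathbf{1}}$ (or a smoothed version) and $Q = \delta_{\mathbf{1}}$; then $D_{\mathrm{KL}}(Q\|\mu) = \log 2 = O(1)$ while $d_{W_1}(\mu,Q) = p$. In the present model this bimodality is precisely what the well-separated optimizer hypothesis is meant to rule out, but your argument never brings that hypothesis to bear on the \emph{posterior} $\mu$ --- only on the value of $F$ at $\mathbf{u}^*$. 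The paper instead uses the structure theorem of \cite{austin2019structure}: $\mu$ admits a partition $\{\mathcal{C}_j\}$ into $e^{o(p)}$ cells on each of which $\mu_{|\mathcal{C}_j}$ is close in $W_1$ to a product tilt $\pi_{(V(\bpsi_j),\mathbf{d})}$ (here Marton can be applied legitimately, since both $\mu_{\bbeta}$ of \eqref{eq:beta_dist} and $\pi_{(V(\bbeta),\mathbf{d})}$ are products), and it is the well-separated condition that forces the cells carrying essentially all the mass to produce tilt parameters $V(\bpsi_j)$ close to $\boldsymbol{\tau}^*$ --- together with Lemma~\ref{eq:rare_coordinates} to control the rare large coordinates. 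That decomposition is the piece of structural information that converts the variational-value statement into a Wasserstein statement about $\mu$; the KL shortcut you propose cannot substitute for it.
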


\noindent 
Our final corollary illustrates some statistical implications of our structure result Theorem~\ref{thm:structure}. 

\begin{corollary}
	[Logistic Regression: coverage rate and prediction error]
    \label{cor:logistic_regression_classification_error}
    Suppose we work with the Logistic Regression model as defined in \eqref{eq:logistic_regression_def}. Assume all conditions in  Theorem~\ref{thm:structure} are satisfied. 
    \begin{itemize}
    	\item[(i)] 
    		For $j \in [p], \varepsilon > 0,$ and $ \alpha \in (0,1/2)$, let
    		\begin{align}
    			\mathcal{I}_{j}^{\varepsilon} := (q_j^{\alpha/2} - \varepsilon, q_j^{1 - \alpha /2}+ \varepsilon), \nonumber
    		\end{align}
    		where $q_j^{\alpha/2}$ and $q_j^{1 - \alpha /2}$ are the $\alpha /2$ and $1 - \alpha /2 $ quantiles of $\pi_{(\tau^*_j, d_j)}.$ Then we have as $p \to \infty$,
    		\begin{align}
    			\mu \Big ( \frac{1}{p}\sum_{j = 1}^p \mathbb{I}(\beta_j \in \mathcal{I}_j^{\varepsilon}) \geq 1 - \alpha - \epsilon  \Big| \mathbf{y} \Big ) \to 1. \nonumber
    		\end{align}
		\item[(ii)] Let $ \tilde{\mathbf{x}} = (\tilde{x}_1, \tilde{x}_2, \dots, \tilde{x}_p) \in \mathbb{R}^p$ be a new data point. Suppose there exists $C >0$ such that $p \|   \tilde{\mathbf{x}} \|_{\infty} < C$. Then as $p \to \infty$,
			\begin{align}
		    	\mu \Big (  \Big | \sum_{j=1}^p \beta_j \tilde{x}_{j} - \sum_{j=1}^p \mathbb{E}_{\beta \sim \pi_{ ( \tau^*_j, d_{j} )}}  [ \beta] \tilde{x}_j   \Big | > \epsilon \Big| \mathbf{y}  \Big ) \to 0. \nonumber
		    \end{align}
		    In addition, suppose $ \tilde{Y} \sim \operatorname{Bern}(\tilde{f}(\tilde{\mathbf{x}} ))$, $\hat{Y}(\bbeta) \sim \operatorname{Bern} \left ( \phi ( \tilde{\mathbf{x}}^\top \bbeta ) \right )$, where $\phi(t):= \frac{1}{1 + e^{-t}}$ and $\tilde{Y} \perp \!\!\! \perp \hat{Y} (\bbeta)$, then the classification error can be characterized as
		    \begin{align}
		    	\mathbb{P}_{ \tilde{Y},\, \bbeta \sim \mu_{\bbeta| \mathbf{y}}, \hat{Y}(\bbeta)}(  \tilde{Y}  \ne \hat{Y}(\bbeta) ) =  2 \cdot \Big | \phi \Big ( \sum_{j=1}^p \mathbb{E}_{\beta \sim \pi_{ ( \tau^*_j, d_{j})}}  [ \beta] \tilde{x}_j    \Big)    - \tilde{f}(\tilde{\mathbf{x}})  \Big |   + o_{\mu(\cdot | \mathbf{y})}(1). \nonumber
		    \end{align}
    \end{itemize}
\end{corollary}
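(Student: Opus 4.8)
The plan is to derive the corollary from the transport estimate of Theorem~\ref{thm:structure}: writing $Q^* := \prod_{j=1}^p \pi_{(\tau_j^*, d_j)}$, we have $d_{W_1}(\mu, Q^*) = o(p)$. Since $Q^*$ is a product of independent $[-1,1]$-valued coordinates, its low-dimensional functionals are trivial to analyze, and the whole proof amounts to transferring such functionals to $\mu$. The only instrument is Kantorovich--Rubinstein duality in the $\ell_1$ cost used in Definition~\ref{def:wasserstein}: if $\Phi$ is $L$-Lipschitz with respect to $\|\cdot\|_1$, then $|\mathbb{E}_\mu[\Phi] - \mathbb{E}_{Q^*}[\Phi]| \le L \, d_{W_1}(\mu, Q^*) = L\cdot o(p)$. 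Two complications must be handled: the events in the statement are not Lipschitz functionals, so indicators will be replaced by Lipschitz surrogates that exploit the $\varepsilon$-inflation of the intervals; and a $W_1$ bound controls only first moments, so to pass from ``correct mean'' to ``concentration'' I also transfer second moments, which is legitimate because the functionals involved are uniformly bounded and hence their squares are Lipschitz with a comparable constant.

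For part (i), fix $\varepsilon>0$ and, for each $j$, choose $f_j:[-1,1]\to[0,1]$ equal to $1$ on $[q_j^{\alpha/2}, q_j^{1-\alpha/2}]$, vanishing outside $\mathcal{I}_j^{\varepsilon}$, and $(1/\varepsilon)$-Lipschitz; the strict inflation by $\varepsilon$ both makes this possible and insulates the argument from atoms of $\pi_{(\tau_j^*, d_j)}$. Put $g(\bbeta) := \tfrac1p\sum_{j=1}^p f_j(\beta_j)$, so $g \le \tfrac1p\sum_j \mathbb{I}(\beta_j \in \mathcal{I}_j^\varepsilon)$, $g$ is $(1/(p\varepsilon))$-Lipschitz in $\|\cdot\|_1$, and (as $g\in[0,1]$) $g^2$ is $(2/(p\varepsilon))$-Lipschitz. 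Under $Q^*$ independence gives $\mathrm{Var}_{Q^*}(g) = O(1/p)$, and by the definition of the quantiles, $\mathbb{E}_{\pi_{(\tau_j^*,d_j)}}[f_j] \ge \mathbb{P}_{\pi_{(\tau_j^*,d_j)}}(\beta_j\in[q_j^{\alpha/2}, q_j^{1-\alpha/2}]) \ge 1-\alpha$, so $\mathbb{E}_{Q^*}[g]\ge 1-\alpha$. Transferring the first and second moments to $\mu$ yields $\mathbb{E}_\mu[g] \ge 1-\alpha - o(1)$ and $\mathrm{Var}_\mu(g) = \mathrm{Var}_{Q^*}(g) + o(1) = o(1)$; Chebyshev then gives $\mu(g \ge 1-\alpha-\epsilon \mid \mathbf{y}) \to 1$, hence the claimed coverage bound.

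For part (ii), the linear functional $\psi(\bbeta) := \sum_j \beta_j \tilde{x}_j$ is $\|\tilde{\mathbf{x}}\|_\infty$-Lipschitz in $\|\cdot\|_1$ and, because $p\|\tilde{\mathbf{x}}\|_\infty < C$, bounded by $\|\tilde{\mathbf{x}}\|_1 \le C$; thus $\psi$ and $\psi^2$ are $O(1/p)$-Lipschitz. Transferring moments, $\mathbb{E}_\mu[\psi] = \sum_j \mathbb{E}_{\pi_{(\tau_j^*,d_j)}}[\beta]\,\tilde{x}_j + o(1)$, while $\mathrm{Var}_\mu(\psi) = \mathrm{Var}_{Q^*}(\psi) + o(1)$ and $\mathrm{Var}_{Q^*}(\psi) = \sum_j \tilde{x}_j^2 \,\mathrm{Var}_{\pi_{(\tau_j^*,d_j)}}(\beta_j) \le \|\tilde{\mathbf{x}}\|_\infty \|\tilde{\mathbf{x}}\|_1 \le C^2/p$, so $\mathrm{Var}_\mu(\psi) = o(1)$ and Chebyshev gives the first displayed limit. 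For the classification error, conditioning on $\bbeta$ reduces $\P(\tilde{Y}\ne \hat{Y}(\bbeta)\mid\bbeta)$ to an explicit function of the two Bernoulli parameters $\tilde{f}(\tilde{\mathbf{x}})$ and $\phi(\psi(\bbeta))$; averaging over $\bbeta\sim\mu$, and using the concentration of $\psi$ just established together with the boundedness and (Lipschitz) continuity of $\phi$ to replace $\phi(\psi(\bbeta))$ by $\phi\big(\sum_j\mathbb{E}_{\pi_{(\tau_j^*,d_j)}}[\beta]\tilde{x}_j\big)$ up to an $o_{\mu(\cdot|\mathbf{y})}(1)$ error, produces the stated formula.

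The main obstacle is that Theorem~\ref{thm:structure} is only a $W_1$ statement: it pins down expectations of Lipschitz functionals under $\mu$ but a priori says nothing about their fluctuations, whereas the corollary asserts convergence in posterior probability. The fix is the boundedness-plus-second-moment device above --- since $g$, $\psi$ and $\psi^2$ are all $O(1/p)$-Lipschitz and uniformly bounded, both their means and their variances transfer from the product measure $Q^*$, and Chebyshev finishes the job. A secondary point, already flagged, is that the coverage claim involves quantiles of the possibly atomic measures $\pi_{(\tau_j^*, d_j)}$; the $\varepsilon$-inflation of the intervals is exactly what supplies enough room to build the Lipschitz surrogates $f_j$ while keeping the $\ge 1-\alpha$ lower bound on $\mathbb{E}_{Q^*}[g]$.
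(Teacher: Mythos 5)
Your proof is correct and rests on the same ingredients as the paper's: Lipschitz surrogate indicators exploiting the $\varepsilon$-inflation, the $W_1$ bound from Theorem~\ref{thm:structure}, and concentration inherited from the product structure of $Q^* = \prod_j \pi_{(\tau_j^*,d_j)}$. The one organizational difference is how concentration is transferred to $\mu$. The paper works directly with the optimal coupling $\Gamma^*$: it shows $\frac{1}{p}\big|\sum_j f_j(\beta_j) - \sum_j f_j(\gamma_j)\big| \to 0$ in $\Gamma^*$-probability (Markov applied to $\frac{1}{p}\mathbb{E}_{\Gamma^*}\|\bbeta-\bgamma\|_1 = o(1)$), then invokes Efron--Stein concentration for $\frac{1}{p}\sum_j f_j(\gamma_j)$ on the $Q^*$ side only, and combines the two along the coupling, so the variance of $g$ under $\mu$ is never estimated. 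You instead transfer both $\mathbb{E}[g]$ and $\mathbb{E}[g^2]$ from $Q^*$ to $\mu$ via Kantorovich duality, using boundedness of $g$ to keep $g^2$ also $O(1/p)$-Lipschitz, so that $\mathrm{Var}_\mu(g) = \mathrm{Var}_{Q^*}(g) + o(1) = o(1)$, and finish with Chebyshev directly under $\mu$. Both routes are valid and of essentially the same strength; the paper's coupling-based argument avoids the (harmless but extra) step of controlling the Lipschitz constant of $g^2$, while yours avoids speaking of the coupling at all and makes the variance estimate under $\mu$ explicit. For part (ii) your treatment of $\psi$ and of the classification-error reduction mirrors the paper's.
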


\noindent
\textbf{Notations:} 
We use the usual Bachmann-Landau notation $O(\cdot)$, $o(\cdot)$, $\Theta(\cdot)$ for sequences. For a sequence of random variables $\{X_p : p \geq 1\}$, we say that $X_p = o(1)$ if $X_p \stackrel{P}{\to} 0$ as $p \to \infty$ and $X_p = o(f(p))$ if $X_p /  f(p) = o(1)$. Similarly, with a slight abuse of notation, we say that $X_p = o_{\mu}(1)$ if $X_p \stackrel{\mu}{\to} 0$. Throughout, we use $C, C_1, C_2 \cdots $ to denote positive constants independent of $n,p$. Further, these constants can change from line to line. For any square symmetric matrix $A$, $\|A\|_\text{op}$ and $\|A\|_F$ denote the matrix operator norm and the Frobenius norm respectively.

\subsection{Prior Work}
We highlight related prior work, and discuss the connections with our results in this section.  

\begin{itemize}
\item[(i)] \textbf{Non-linear large deviations:} The theory of non-linear large deviations was initiated in the seminal work \cite{MR3519474}, as a general tool to solve the large deviation problem for sub-graph counts on sparse random graphs. Several alternative approaches to a general theory have emerged since, including the gaussian-width based approach of \cite{MR3881829}, and the information theoretic treatment by \cite{austin2019structure}. We exploit the approach of \cite{austin2019structure} in this work; however, the alternative approaches are closely related, and we believe it should be possible to derive similar results using any of the alternative approaches. In \cite{basak2017universality,jain2018mean,jain2019mean} the authors utilize these general ideas to analyze the log-partition function for Ising/Potts models on graph sequences with growing degrees. 

In \cite{mukherjee2021variational}, the first and third author employ ideas from non-linear large deviation theory to study high-dimensional bayesian linear regression. The current paper is a continuation of this research direction; we extend the prior results significantly beyond the linear model in this paper. As we emphasize in Section~\ref{sec:technical_contributions}, the linear model was closely related to prior analyses of Ising/Potts models, due to the quadratic nature of the associated likelihood. The canonical GLM has a very different structure, and thus requires significantly different ideas. We emphasize our main technical contributions, and highlight the difference compared to existing works in Section~\ref{sec:technical_contributions}.



\item[(ii)] \textbf{Spin glasses and variational inference:} There has been a rich-thread of recent research which utilizes ideas from mean-field spin glass theory to analyze high-dimensional Bayesian inference problems. These analyses typically assume that $x_{ij} \sim \mathcal{N}(0,1/n)$ are i.i.d. and work under a proportional asymptotic regime i.e., $p \propto n$. In particular, \cite{barbier2020mutual} characterizes the limiting mutual information for Bayes linear regression, while the seminal work \cite{barbier2019optimal} characterizes the limiting mutual information and estimation errors for GLMs. The ``adaptive interpolation method" has emerged as a central tool in this endeavor, and facilitates the aforementioned analyses. We note that existing works typically assume a well-specified setting i.e., the data is generated from the model family being fitted to the data. In this setting, one can utilize the powerful Nishimori identities \cite{nishimori2001statistical} to establish replica-symmetry of the posterior distribution. Going beyond the well-specified setting is generally challenging---the associated posterior is expected to be full-replica symmetry breaking in many cases. We highlight exciting recent progress in \cite{barbier2021performance,barbier2022strong}, where the authors establish replica symmetry even beyond the well-specified setting, by exploiting log-concavity of the prior.  

Finally, we note that our analysis is based on the accuracy of the NMF approximation. The NMF approximation is expected to be inaccurate for regression models in the proportional asymptotics regime. Instead, the Thouless-Anderson-Palmer (TAP) approximation from spin glasses is conjectured to yield a tight approximation to the log-partition function \cite{krzakala2014variational}. The TAP representation for the log-partition function of bayes linear regression with a uniform spherical prior was established at high-temperature in \cite{qiu2023tap}. The instability of NMF free energy has been established for the related spiked matrix problem in  \cite{ghorbani2019instability}. In follow up work, \cite{fan2021tap} and \cite{celentano2023local} studies the TAP free energy, and establishes asymptotic bayes optimality for the statistical procedure obtained.



\item[(iii)] \textbf{NMF Variational Bayes:} Variational Bayesian inference, based on the NMF approximation, has attracted significant attention in high-dimensional Bayesian statistics. We refer to \cite{MR3263115}, \cite{MR3709863}  and the references therein for some early works, and to \cite{MR2221291,wang2019frequentist,wang2019variational} for theoretical analyses of NMF Variational Bayes under classical low-dimensional asymptotics. More recently, the community has focused on the statistical performance of variational posteriors in high-dimensional models, often with additional sparsity assumptions on the underlying model; we refer the interested reader to \cite{MR3595173,MR4124331,han2019statistical,MR4480711,ray2020spike,MR4102680, qiu2023sub} and references therein for the state-of-the-art in this area. In a related direction, \cite{katsevich2023approximation} investigates the theoretical properties of Gaussian Variational Inference in high-dimensions.



\item[(iv)] \textbf{Algorithms:} The main attraction of Variational Inference from a practical standpoint lies in the dramatic computational gains conferred by these algorithms. In special cases, e.g. Bayes linear regression with product priors, one typically employs Coordinate Ascent Variational Inference (CAVI) algorithms to find an optimizer $\hat{Q}$ of  the RHS of \eqref{eq:ELBO}. CAVI algorithms are typically fast and iterative, and can be applied to very large problems, well-beyond the purview of traditional MCMC methods. However, such algorithms are generally unavailable for Bayesian GLMs. In a celebrated work, \cite{jaakkola2000bayesian} introduced an algorithm for Bayesian logistic regression with gaussian priors --- this algorithm introduces a quadratic lower bound to the logistic likelihood, and then optimizes this lower bound. This algorithm was adopted to the high-dimensional logistic regression model with gaussian spike and slab priors recently in \cite{ray2020spike}. These algorithms do not generalize beyond this particular example, and thus there have been many attempts to design general purpose Variational Inference algorithms for Bayesian GLMs. A recent general idea ``black box variational inference" \cite{paisley2012variational, ranganath2014black} --- in this case, the statistician searches for the optimizer $\hat{Q}$ in \eqref{eq:ELBO} via direct gradient descent. The gradient is typically computed by Monte Carlo, or quasi Monte Carlo \cite{liu2021quasi}. We refer the reader to \cite{blei2017variational} for a discussion of recent developments in this direction. Finally, various message passing based algorithms have also been proposed for this problem \cite{knowles2011non,nolan2017accurate} ---unfortunately, these algorithms usually lack theoretical guarantees, which restricts their use in practice. 

\item[(v)] \textbf{Variational inference via Optimal Transport:} There has been significant recent progress in the design and analysis of variational inference algorithms using ideas originating from optimal transport \cite{santambrogio2015optimal}. \cite{lambert2022variational} establishes convergence of CAVI algorithms under log-concavity,  \cite{diao2023forward} and \cite{jiang2023algorithms} develop algorithms to solve Gaussian VI and mean-field VI assuming smoothness on the posterior,  \cite{lacker2023independent} analyzes mean-field Variational inference for diffusion processes, \cite{arnese2024convergence} derives guarantees for the CAVI algorithm for log-concave measures, \cite{yao2024wasserstein} develops a Wasserstein gradient descent algorithm for mean-field Variational inference. 


\end{itemize}

\subsection{Technical Contributions:} 
\label{sec:technical_contributions} 
We highlight our main technical contributions in this section.

\begin{itemize} 
 \item[(i)] \textbf{NMF for non-linear statistical models:} The theory of non-linear large deviations was originally developed to characterize large deviation probabilities for sub-graph counts in sparse random graphs \cite{MR3881829,MR3519474,yan2020nonlinear}. These techniques were used to analyze Ising and Potts models on random graphs first in \cite{basak2017universality}; the initial results were sharpened in several follow up works \cite{MR3881829,eldan2020taming,jain2019mean,jain2018mean}. In the context of statistical learning problems, two authors of this article utilized these techniques to analyze the bayesian linear regression problem in \cite{mukherjee2021variational}. The Ising/Potts model and bayes linear regression analyses crucially utilize the quadratic nature of the associated Hamiltonians. To highlight the main idea at a high-level, fix any symmetric matrix $\mathbf{A} \in \mathbb{R}^{p \times p}$ with $A_{i,i}=0$ for $1 \leq i \leq p$. Consider the quadratic Hamiltonian $H(\bbeta) = \bbeta^{\top} \mathbf{A} \bbeta$; an application of the general framework developed in \cite{austin2019structure} yields that NMF is correct to leading order if the set $\{\mathbf{A} \bgamma : \bgamma \in [-1,1]^{p} \}$ has an $L^1$ net of size $\exp(o(p))$. In prior works, the authors identify the condition $\mathrm{Tr}(\mathbf{A}^2) = o(p)$ as a sufficient condition to guarantee the existence of an efficient net. The success of the first step relies crucially on the quadratic nature of the Hamiltonian, and does not generalize directly to settings beyond linear regression. 

To analyze non-linear statistical models using existing ideas, one might first try a global second order Taylor expansion: 
\begin{align}
H(\bbeta) \approx H(\bbeta_0) +  (\bbeta - \bbeta_0 )^{\top} \nabla H(\bbeta_0)  + \frac{1}{2}  (\bbeta - \bbeta_0 )^{\top} \nabla^2 H(\bbeta_0)  (\bbeta - \bbeta_0 ), \nonumber 
\end{align}
where $\bbeta_0$ is an arbitrary reference point in $[-1,1]^{p}$. If the  difference between the two sides above is $o(p)$ uniformly in $\bbeta$, one can a priori replace the original posterior distribution by a pseudo Gibbs measure with the approximate quadratic Hamiltonian, and then invoke existing ideas for quadratic models. This route can indeed be formalized in special cases: for example, if the design entries are i.i.d. $\mathcal{N}(0,1/n)$ and $p=o(n)$, one can choose $\bbeta_0 = \mathbf{0}$ and rigorously justify the error of the global quadratic expansion using standard operator norm bounds from random matrix theory. 

However, one can easily construct examples of design matrices $\mathbf{X}$ where the global quadratic expansion strategy does not work: a simple example in this regard is the case $n=p$ and $x_{ij} = 1/p$ for all $1\leq i, j \leq p$. In this case, the NMF approximation should be valid, as the Hamiltonian is a function of a single projection $\bar{\bbeta} = \frac{1}{p} \sum_{i=1}^{p} \beta_i$, but the Hamiltonian cannot be globally approximated by a quadratic function. The situation is considerably more involved if the design has two parts, one similar to the random design example described above, and the other corresponding to the structured, low-rank structure described here. 

In Theorems \ref{thm:covering} and \ref{thm:covering_suff}, we derive easily verifiable sufficient conditions for the correctness of the NMF approximation in the special context of canonical GLMs. Our conditions can cover both the random design and low-rank examples described above, and any combination thereof. To derive these sufficient conditions, we employ the general framework introduced in \cite{austin2019structure}; we establish that even though a global quadratic expansion is possibly incorrect, one can still approximate the Hamiltonian ``locally" by a quadratic function in our setting. The NMF approximation is then true if (i) the number of distinct local quadratic approximations required is of size $\exp(o(p))$ and (ii) the local Hessians are each dominated by a ``few" large eigenvalues (this is an informal version of Theorem \ref{thm:covering_suff}). When these conditions hold, we informally think that the model has ``low Hessian complexity". This low Hessian complexity-based strategy to establish the correctness of the NMF approximation should be valid in settings beyond GLMs, and our proofs could thus be of independent interest. 

We note that the importance of ``Hessian complexity" in the context of the NMF approximation for the log-partition function has been hinted at in recent prior work \cite{barbier2022strong}; however, this work focused on log-concave distributions, and thus the approach and main results are independent of our work. The log-concave case is potentially conceptually simpler, as the Gibbs measure is always in a ``pure state". However, we find this connection tantalizing, and believe exploring further connections in this realm is a fruitful direction for future research. 

\item[(ii)] \textbf{Identification of sufficient parametric sub-families:} For models with quadratic Hamiltonians, one can easily establish that any NMF optimizer must have a special structure; indeed, each component distribution must be a quadratic tilt of the base measure $\pi$. This explicit identification has played a crucial role in prior analyses of Ising/Potts and linear regression models \cite{basak2017universality, mukherjee2021variational}. This ingredient is absent for GLMs (and more generally for models with non-quadratic hamiltonians), which a priori makes their analysis intractable. Building on the analysis of \cite{austin2019structure}, Theorem~\ref{thm:alg_glm} identifies appropriate sub-families of product distributions which suffice for NMF approximation in GLMs.  This observation has several distinct implications. First, it directly helps in our subsequent investigations into the probabilistic properties of the posterior distribution (see Theorem~\ref{thm:structure}). Second, this finding can aid in implementing NMF approximations algorithmically in practical data analysis. We discuss this impact in detail in Section~\ref{sec:algo}.  

\item[(iii)] \textbf{Low-dimensional marginals of the posterior distribution:} If the NMF approximation is correct to leading order, the associated Gibbs measure can generally be approximated by a mixture of product distributions (with $\exp(o(p))$ many mixture components) \cite{austin2019structure}. In general, it is challenging to pin point the mixture components and their weights under the Gibbs distribution; as a consequence, it is often challenging to characterize natural properties of the Gibbs distribution e.g., the low-dimensional marginals, Mean-square error, average coverage guarantees etc. For well-specified models (i.e., if the data is sampled from the fitted family), the Nishimori identities and the associated strong replica symmetry properties provide powerful additional ingredients to characterize high-dimensional posteriors. However, beyond the well-specified setting, these properties are generally absent; in a recent work authors of \cite{barbier2020strong} establishes that replica symmetric behavior can be recovered in the general setting, provided the prior is log-concave. We emphasize that we never make any assumptions on the true generative model for the response $\mathbf{y}$, and thus our results go beyond the well-specified setting. In this generality, it is quite challenging to analyze the posterior distribution, even under the NMF assumption. Building upon our insights in Theorem \ref{thm:structure}, we identify sufficient conditions for the posterior to be approximable (in Wasserstein metric) to a product measure in Theorem \ref{thm:structure}. The proof of this result is delicate, and constitutes one of our main technical contributions. 

To describe our technical contribution, we introduce some ideas from \cite{austin2019structure} at a high-level; the main conceptual ideas in alternative NMF approximation results are similar, and our approach should be adaptable to these alternate frameworks in a straight-forward manner. Given the assumptions of Theorem~\ref{thm:covering}, \cite{austin2019structure} constructs a partition of $[-1,1]^{p} = \cup_{j=1}^{N} \mathcal{C}_j$, and establishes that the posterior distribution $\mu (\cdot| \mathbf{y})$, restricted to any element $\mathcal{C}_j$, is close to a product measure (in Wasserstein distance). In particular, this implies the approximate decomposition $\mu (\cdot | \mathbf{y}) =  \sum_{j=1}^{N} \mu(\mathcal{C}_j | \mathbf{y}) \mu_{|\mathcal{C}_j}(\cdot|\mathbf{y}) \approx  \sum_{j=1}^{N} \mu(\mathcal{C}_j | \mathbf{y}) Q_j$, where $Q_j$ represents a product distribution for $1\leq j \leq N$. At this step, if one can establish that there exists $1\leq j \leq N$ such that $\mu(\mathcal{C}_j | \mathbf{y}) \approx 1$, this would immediately imply $\mu(\cdot|\mathbf{y}) \approx Q_j$. However, this condition is not easily verifiable in practice. Instead, we work with the well-separated optimizer property (see Definition~\ref{def:well_separated}) --- under this property, there exists a subset $\mathcal{S} \subseteq [N]$ and a product distribution $Q$ such that $\mu(\cup_{j \in S} \mathcal{C}_j | \mathbf{y}) \approx 1$ and for each $j \in S$, $Q_j \approx Q$. This ensures that $\mu(\cdot | \mathbf{y}) \approx Q$. We formalize this idea in the proof of Theorem~\ref{thm:structure}. 
\end{itemize}

\noindent
\textbf{Organization:} The rest of the paper is structured as follows. We include some numerical demonstrations of our results, and explore novel variational inference methodology based on our theoretical insights in Section~\ref{sec:algo}. We discuss some directions for future inquiry in Section~\ref{sec:discussion}. We prove our results in Section~\ref{sec:proofs}. Finally, Section~\ref{sec:auxiliary_results} collects some auxiliary results required for our proofs.

\vspace{10pt} 

\noindent
\textbf{Acknowledgments:} SS gratefully acknowledges support from NSF (DMS CAREER 2239234), ONR (N00014-23-1-2489) and AFOSR (FA9950-23-1-0429). SM gratefully acknowledges support from NSF (DMS-2113414).

\section{Numerical experiments}
\label{sec:algo}
We provide some numerical evidence in support of our theoretical results in Section~\ref{sec:normalizing_const_eval}. In Section~\ref{sec:discrete_algo}, we turn our attention to the algorithmic implications of our results. 
Using \eqref{eq:ELBO1}, if $\hat{Q}$ is an optimizer of the RHS in \eqref{eq:ELBO}, $\hat{Q}$ minimizes the KL divergence between the posterior $\mu$ and the class of product measures. This suggests a natural algorithmic strategy---approximate any optimizer $\hat{Q}$, and use $\hat{Q}$ as a proxy for the true posterior $\mu$. This strategy is ubiquitous in modern high-dimensional statistics and Machine Learning, and is usually referred to as NMF Variational Inference (NMF VI). It is challenging to approximate any optimizer $\hat{Q}$ for a general GLM. In Section~\ref{sec:discrete_algo}, we illustrate that our results can provide some natural algorithmic guidelines in this direction.


\begin{algorithm}
\caption{The Jakkola-Jordan algorithm for logistic regression with Gaussian priors}\label{alg:JJ_algo}
\begin{algorithmic}
\Require $\mathbf{X}, \mathbf{y}, \mathbf{u}_0, \Sigma_0, \mathbf{\xi}_0$
\State $t \gets 1$
\While{A suitable convergence criterion is not satisfied}
\State $\Sigma_t^{-1} \gets \Sigma_0^{-1} + 2 \sum_{i=1}^n \lambda(\xi_{t-1,i}) \mathbf{x}_i \mathbf{x}_i^T$ \Comment{$\lambda(x) := \frac{1}{2x} \left ( \frac{1}{1 + e^{-x}} - \frac{1}{2} \right ) $}
\State $\mathbf{u}_{t} = \Sigma_t \left ( \Sigma_0^{-1} \mathbf{u}_0 + \sum_{i=1}^n (y_i - 1/2) \mathbf{x}_i \right )$ 
\For{$i = 1:n$}
\State
        $\xi_{t,i} = \sqrt{\mathbf{x}_i^T (\Sigma_t + \mathbf{u}_t \mathbf{u}_t^T) \mathbf{x}_i}$
    \Comment{optimizing the variational parameters.}
\EndFor
\State $t \gets t+1 $
\EndWhile \\
\Return $\mathcal{N}(\mathbf{u}_{t-1}, \Sigma_{t-1})$
\end{algorithmic}
\end{algorithm}

\subsection{Approximating the log partition function}
\label{sec:normalizing_const_eval}  
We investigate the efficacy of the NMF approximation to the log partition function in problems of moderate size. This complements the asymptotic conclusions derived in Theorems \ref{thm:covering} and \ref{thm:covering_suff} above. The illustration here serves as a proof of concept---we do not streamline the numerical implementations to optimize computational efficiency. To this end, we study a design matrix $\mathbf{X}$ with a block structure. We assume that $n \geq 2p$.    
Denote $\mathbf{X}^{\top}  = [\mathbf{x}_1, \cdots, \mathbf{x}_{n}]$. 
Setting $\boldsymbol{1}_p := (1, \cdots, 1) \in \mathbb{R}^p$, we define $\mathbf{x}_1= \cdots = \mathbf{x}_p = \frac{1}{p} \boldsymbol{1}_p$; in addition, we set $\mathbf{x}_{p+1} = \cdots = \mathbf{x}_{2p} = [ \frac{1}{p} \boldsymbol{1}_{p/2}, -\frac{1}{p} \boldsymbol{1}_{p/2}]$. Finally, for $2p +1 \leq i \leq n$, $\mathbf{x}_i \sim \mathcal{N}(0, \mathbf{I}_p/n)$ are i.i.d. Given the design, we sample $\mathbf{y}$ from a logistic regression model; the coefficients of the regression model are sampled i.i.d. from a prior $\pi$. As a special case, we consider $\pi = \mathcal{N}(0,1)$---note that this setting is not within the strict purview of Theorems~\ref{thm:covering} and \ref{thm:covering_suff}, due to the unbounded support of the gaussian prior. However, this specific setting has been studied in the prior literature, and there exist well-known estimates which serve as a benchmark for our NMF approximation. Moreover, given the fast decay of the tails of the gaussian prior, we believe that this setting is a relatively minor extension of our setup. 

We consider three distinct approximations to the log-partition function $\log \mathcal{Z}_p$. 
\begin{itemize}
\item[(i)] Monte Carlo estimate---We use the popular package {\textsf{STAN}} to draw approximate samples from the posterior distribution. Using these approximate samples, we obtain an estimate for the MSE of the posterior mean. Finally, {\textsf{STAN}} uses a bridge sampling algorithm to estimate the log-partition function. 
\item[(ii)] Jakkola-Jordan algorithm---The Jakkola-Jordan tangent transform method \cite{jaakkola2000bayesian} was developed to approximate the posterior in this specific example. The method approximates the joint distribution by a multivariate gaussian distribution. We refer to the original paper \cite{jaakkola2000bayesian} for a derivation of the approximation scheme; the resulting algorithm is iterative, and updates the mean vector and covariance matrices of the fitted multivariate gaussian distribution. For the convenience of the reader, we present the iterative update scheme in Algorithm \ref{alg:JJ_algo}. Denote the resulting multivariate gaussian distribution as $Q^{\mathrm{JJ}}$. We estimate the log-partition function as $\log \mathcal{Z}_p \approx \mathbb{E}_{Q^{\mathrm{JJ}}}[H(\bsigma)] - \mathrm{D}_{\mathrm{KL}}(Q^{\mathrm{JJ}} \| \pi^{\otimes p})$. Finally, we estimate the MSE using the mean vector of the fitted multivariate gaussian distribution. 
\item[(iii)] NMF estimate---We use the NMF estimate \eqref{eq:ELBO}, and compute a restricted maximum over the class of independent Gaussian distributions 
\begin{align*}
    \Gamma = \left \{ Q_{\mathbf{u}, \mathbf{v}} = \otimes_{i=1}^p \mathcal{N}(u_i, v_i): \mathbf{u} \in \mathbb{R}^p, \mathbf{v} \in \mathbb{R}_+^p \right \}.
\end{align*}
To fit this variational family, we compute the derivatives of 
\begin{align*}
    \mathcal{M}(\mathbf{u},\mathbf{v}) &:=  \mathbb{E}_{ \bsigma \sim Q_{\mathbf{u}, \mathbf{v}}} \left [ - H (\bsigma) \right ] -\operatorname{D}_{\text{KL}}(Q \| \pi_0 ) \\
    &= \sum_{k=1}^n y_k \langle \mathbf{x}_k, \mathbf{u} \rangle - \sum_{k=1}^n \mathbb{E}_{\bsigma \sim Q_{\mathbf{u},\mathbf{v}}} b( \langle \mathbf{x}_k, \bsigma \rangle ) + \frac{1}{2} \sum_{i=1}^p \log v_i - \sum_{i=1}^p \frac{v_i + u_i^2}{2} + \frac{p}{2}
\end{align*}
with respect to the variational parameters $\mathbf{u}$ and $\mathbf{v}$,
\begin{equation}
\label{eq:simulation_Gaussian_derivative}
\begin{aligned}
\frac{\partial \mathcal{M}}{\partial u_i} &= \sum_{k=1}^n y_k x_{ki} - \sum_{k=1}^n \mathbb{E}_{\bsigma \sim Q_{\mathbf{u},\mathbf{v}}} \left [   b( \langle \mathbf{x}_k, \bsigma \rangle ) \left ( - \frac{u_i - \sigma_i}{v_i} \right )  \right ] - u_i, \\
\frac{\partial \mathcal{M}}{\partial v_i} &= \sum_{k=1}^n \mathbb{E}_{\bsigma \sim Q_{\mathbf{u},\mathbf{v}}} \left [ b( \langle \mathbf{x}_k, \bsigma \rangle ) \left ( -\frac{1}{2 v_i} + \frac{(u_i - \sigma_i)^2 }{2v_i^2} \right )  \right ] + \frac{1}{2 v_i} - \frac{1}{2}.
\end{aligned}
\end{equation}
These derivatives can be approximately computed using a Monte Carlo strategy. Lastly, we use L-BFGS-B of \cite{byrd1995limited} to solve for 
$
    \sup_{ \mathbf{u} \in \mathbb{R}^p, \mathbf{v} \in \mathbb{R}_+^p  } \mathcal{M}(\mathbf{u}, \mathbf{v}).
$
We denote the resulting optimizer as $(\mathbf{u}_{\text{NMF}}, \mathbf{v}_{\text{NMF}})$ and the optimal value as $\mathcal{M}_{\text{NMF}}$, which serves as our NMF approximation to the log partition function $\log \mathcal{Z}_p$. 

\end{itemize} 

We summarize our findings in Figure~\ref{fig:Gaussian_prior_two_plots} and Table~\ref{table:Gaussian_prior_summary}. The NMF approximation to $\log \mathcal{Z}_p$ shows excellent agreement with the alternative estimates, thus illustrating the validity of this approximation. In addition, the MSE  for fitting the regression coefficients $\boldsymbol{\bbeta}$ obtained from the NMF scheme mirrors those obtained from the Monte Carlo/Jakkola-Jordan schemes. This further emphasizes the usefulness of this approximation. 

\begin{figure}[h]
\begin{minipage}{.4\linewidth}
    \centering
    \includegraphics[width=6cm]{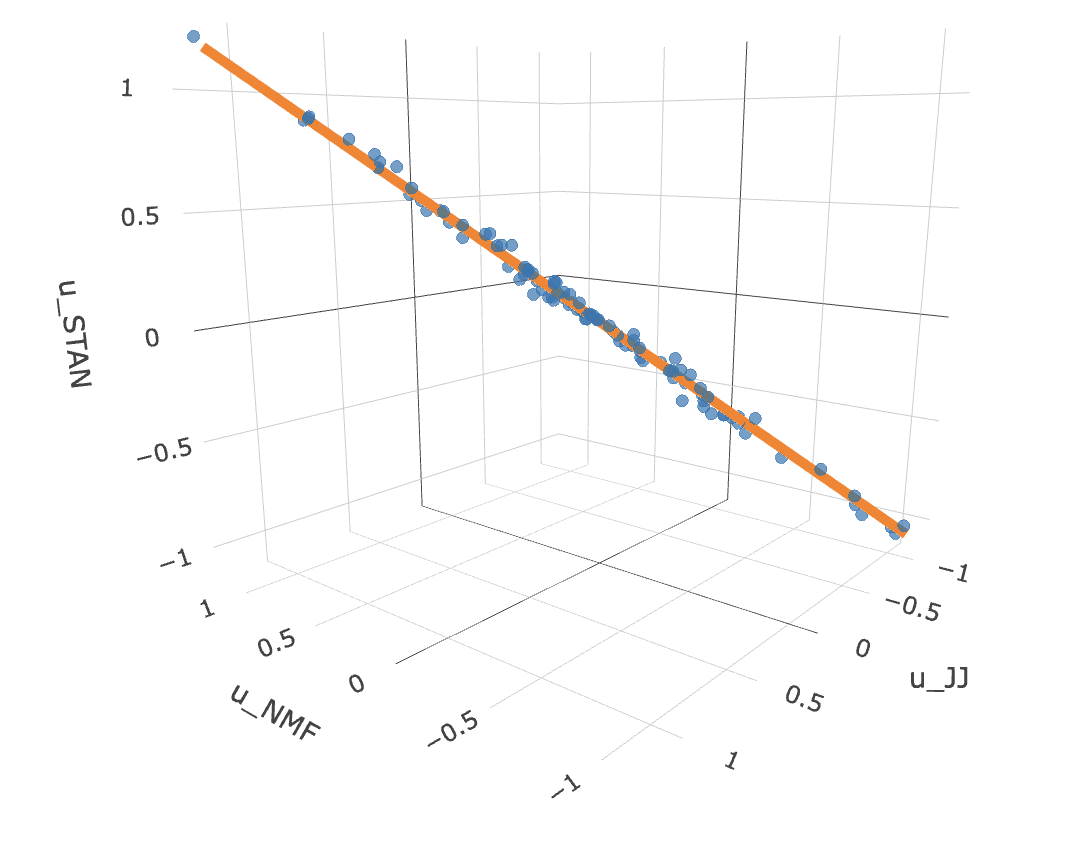}
\end{minipage}
\begin{minipage}{.6\linewidth}
    \centering
    \includegraphics[width=8cm]{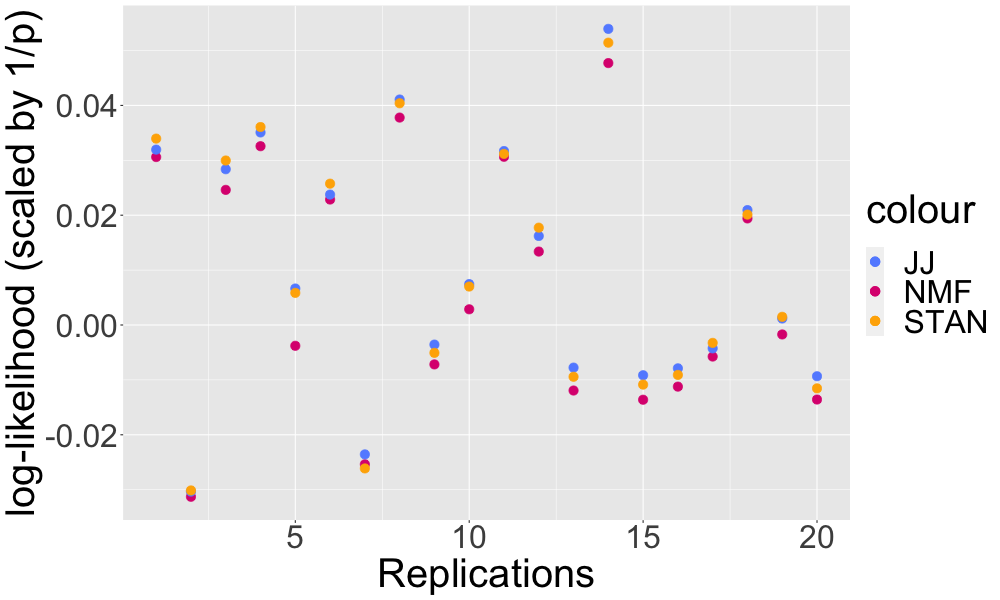}
\end{minipage}
\caption{The 3-D plot on the left visualizes the almost perfect alignment among $\mathbf{u}_{\text{NMF}}$, $\mathbf{u}_{\text{STAN}}$, and $\mathbf{u}_{\text{JJ}}$, when $n = 4000$ and $p = 100$. The right panel showcases the estimated log partition functions (y-axis) given by different methods, for 20 repeated experiments (x-axis). In particular, `STAN' stands for a sampling-based method called bridge sampling \cite{gronau2017bridgesampling}, and `JJ' refers to the widely celebrated tangent transform algorithm proposed by \cite{jaakkola2000bayesian}. Please note that for `JJ', the plotted value was not the evidence lower bound (ELBO). Instead, it was Monte Carlo evaluation of $\mathbb{E}_{\bsigma \sim Q^{\text{JJ}}} \left [ - H (\bsigma) \right ] -\operatorname{D}_{\text{KL}}(Q \| \pi_0 )  $, where $Q^{\text{JJ}}$ is the multivariate Gaussian distribution rendered by the Jaakkola and Jordan algorithm upon convergence. Here $n = 2000$ and $p = 50$.}
\label{fig:Gaussian_prior_two_plots}
\end{figure}

\begin{table}[h]
    \centering
    \begin{tabular}{@{} c|c|c|c @{}}
        $(p,n)$ & (50, 2000) & (100, 2000) & (100, 4000) \\
        \hline
        $ \left (\mathcal{M}_{\text{NMF}} - \log \mathcal{Z}_{\text{STAN}} \right ) / p$ & -0.0029 (0.0021) & -0.0047 (0.0013) & -0.0052 (0.0020)  \\
        \hline
        $ \left ( \mathcal{M}_{\text{NMF}} - \log \mathcal{Z}_{\text{JJ}}\right )/p $ & -0.0033 (0.0022) & -0.0045 (0.0014) & -0.0052 (0.0017) \\
        \hline
        STAN MSE & 0.719 (0.130) & 0.872 (0.114) & 0.808 (0.099)\\
        \hline
        JJ MSE & 0.719 (0.129) & 0.872 (0.114)  & 0.807 (0.098) \\
        \hline
        NMF MSE & 0.720 (0.126) & 0.870 (0.112)  & 0.813 (0.101) \\
    \end{tabular}
\caption{This table provides a compact summary of the comparisons between the three different methods, in terms of both log partition function approximation and mean square error with respect to the true hidden signal. The reported values are averages over 20 repeated experiments and the numbers in parentheses are standard deviations.}
\label{table:Gaussian_prior_summary}
\end{table}



\subsection{Logistic Regression with discrete priors}
\label{sec:discrete_algo} 

In this section, we focus on NMF Variational Inference for general GLMs with a non-gaussian prior on $[-1,1]$.   The Jakkola-Jordan algorithm crucially uses the gaussianity of the prior and does not generalize to this setting. Several instance specific algorithms have been introduced in the prior literature on this problem (see e.g, \cite{ray2020spike} and the references therein). In \cite{paisley2012variational,ranganath2014black}, the authors introduce a general idea, which they term ``black box" Variational Inference. The basic idea is the following: suppose one wishes to approximate the posterior distribution $\mu$ by minimizing $$\inf_{\upsilon \in \Upsilon} \mathrm{D}_{\mathrm{KL}}(Q_{\upsilon} \| \mu),$$ 
where $\Upsilon$ denotes an appropriate index set. In typical applications, $\Upsilon \subseteq \mathbb{R}^D$ for some $D \geq 1$.  Using \eqref{eq:ELBO1}, this is equivalent to maximizing the function 
\begin{align}
\upsilon \mapsto  \mathbb{E}_{\bbeta \sim Q_{\upsilon} } \left [ - H (\bbeta) \right ] -\operatorname{D}_{\text{KL}}(Q_{\upsilon} \| \pi_p ). \nonumber 
\end{align} 

To this end, \cite{paisley2012variational,ranganath2014black} implement a gradient descent approach, and estimate the gradients using Monte Carlo. We refer the interested reader to \cite{blei2017variational} and references therein for successful applications of this general strategy to diverse applications in statistics and machine learning. This general purpose strategy is extremely powerful if the variational family is a priori parametrized by some euclidean parameter $\upsilon \in \Upsilon$. 

This strategy is generally inapplicable for NMF Variational Inference i.e. if one wishes to approximate the posterior $\mu$ using the class of all product measures. This is specifically where our results can be useful.  Note that in this case, using \eqref{eq:ELBO1}, one wishes to optimize the map 
\begin{align}
Q \mapsto  \mathbb{E}_{\bbeta \sim Q } \left [ - H (\bbeta) \right ] -\operatorname{D}_{\text{KL}}(Q \| \pi_p ) \label{eq:dist_func}  
\end{align} 
over the set of product distributions absolutely continuous with respect to $\pi_p$. This is difficult to implement via an efficient algorithm. However, Theorem \ref{thm:alg_glm} establishes that to compute an approximate optimizer of \eqref{eq:dist_func}, we can restrict to sub-families $\{\mu_{\bbeta}: \bbeta \in [-1,1]^{p}\}$, $\{ \prod_{i=1}^{p} \pi_{(\gamma_j, d_j)}: \gamma_j \in \mathbb{R}\}$ or $\{Q_{\mathbf{u}} : \mathbf{u} \in [-1,1]^{p}\}$. Armed with this insight, one can re-implement the general strategy of ``black-box" Variational Inference described above. Thus our insights allow for a simple algorithmic implementation of NMF Variational Inference in general GLMs. We consider this to be an important practical takeaway of our results. 

We illustrate this general strategy in the context of a logistic regression model below. Our demonstration serves as a proof of concept --- in particular, we do not optimize our implementation for space/time complexity. To highlight our idea in a concrete setting, we assume that $ \pi_p:=\pi^{\otimes p}$ and $\pi$ is a probability distribution supported on $\{-1,0,1\}$. In particular, for $\xi_{-1}, \xi_0, \xi_1 \geq 0$ and $\xi_{-1} + \xi_0 + \xi_1 = 1$, let $\pi(\{-1\}) =  \xi_{-1}$, $\pi(\{0\})= \xi_0$, $\pi(\{1\})=\xi_1$, with $\xi_{-1} + \xi_0 + \xi_{1} = 1$. For our numerical experiments, we choose $\xi_{-1} = \xi_{1} = 0.2$, $\xi_{0} = 0.6$. Using Theorem~\ref{thm:alg_glm}, we use the Variational family 
%
%
%
\begin{align*}
    \Gamma = \Big \{ Q_{\mathbf{u}} := \prod_{i=1}^p \pi_{(h(u_i,d_i),d_i)} : \mathbf{u} \in (-1,1)^p \Big \},
\end{align*}
where $d_i := (\mathbf{X}^T \mathbf{X})_{ii}$. 
Define
\begin{align*}
    \mathcal{M}(\mathbf{u}) = \mathbb{E}_{\bsigma \sim Q_{\mathbf{u}}} \left [ - H (\bsigma) \right ] -\operatorname{D}_{\text{KL}}(Q_{\mathbf{u}} \| \pi_0 ).
\end{align*}

To optimize $\mathcal{M}(\cdot)$, we utilize the first order stationary point condition established in \eqref{eq:M_derivative}, \eqref{eq:v_defn} and construct an iterative scheme with variables $\{(\mathbf{u}_p^{(t)}, \mathbf{v}_p^{(t)}): t \geq 0\}$  initializing at $\mathbf{u}_p^{(0)} = \boldsymbol{0}$ as follows: 
\begin{align}
\label{eq:simulation_iterative_1}
v_{p,j}^{(t+1)}  &= \sum_{i=1}^{n} y_i x_{ij}  - \frac{\mathrm{Cov}_{\pi_{(h(u_{p,j}^{(t)},d_j),d_j)} }(f_j(\sigma_j; \mathbf{u}_p^{(t)}), \sigma_j)}{\ddot{c}_{\pi}(h(u_{p,j}^{(t)},d_j),d_j)} + b''(0) \frac{d_j}{2} \frac{\mathrm{Cov}_{\pi_{ (h(u_{p,j}^{(t)}, d_j), d_j) }} (\sigma_j^2,\sigma_j) }{\ddot{c}_{\pi}(h(u_{p,j}^{(t)}, d_j),d_j)},
\end{align}
\begin{align}
\label{eq:simulation_iterative_2}
\mathbf{u}_p^{(t+1)}  &= \dot{c}_\pi (\mathbf{v}_p^{(t + 1)}, \mathbf{d}),
\end{align}
where
\begin{align*}
f_j(\sigma_j ; \mathbf{u} ) = \mathbb{E}_{\pi_{(h(\mathbf{u},\mathbf{d}),\mathbf{d})} }\Big[\sum_{i=1}^{n} (b(\mathbf{x}_i^{\top} \mathbb{\sigma}) - b(\mathbf{x}_i^{\top} \mathbb{\sigma}_{0,j})) \Big| \sigma_j \Big].
\end{align*}
Note that for discrete distributions with finite support, the $\mathrm{Cov}$ functional above can be computed efficiently. The challenging step in this iteration is the computation of $f_j$ --- we use a Monte Carlo strategy to this end. We defer the analytic forms of  $\dot{c}_\pi$ and $\ddot{c}_{\pi}$ to the supplementary material. Upon convergence, we denote the final output as $\mathbf{u}_{\text{NMF}}$, which serves as the NMF approximation to the posterior mean vector, i.e., $\mathbb{E}_{\bbeta \sim \mu} [\mathbf{\bbeta}]$. In Figure~\ref{fig:discrete_prior}, we compare this $\mathbf{u}_{\text{NMF}}$ with the posterior mean approximated by a naive Gibbs sampler. 
\begin{figure}[h]
\begin{minipage}{.55\linewidth}
\includegraphics[width=7cm]{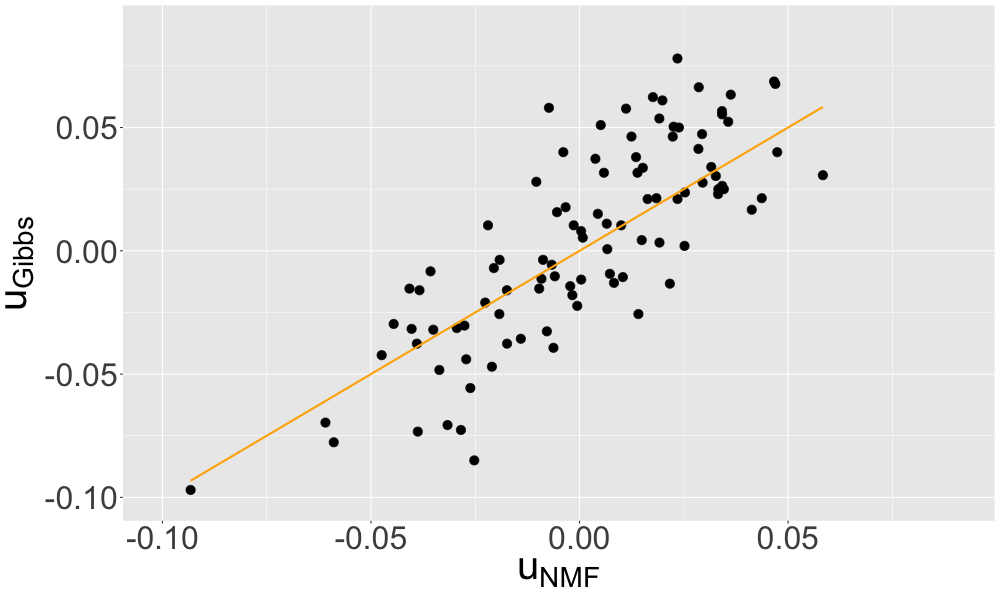}
\end{minipage}
\begin{minipage}{.4\linewidth}
    \begin{tabular}{@{} p{26mm}|c|c @{}}
         & Average & SD \\
        \hline
        NMF MSE & 0.3984219 & 0.005186545   \\
        \hline
        Gibbs MSE & 0.3971022 & 0.007278441   \\
    \end{tabular}
\end{minipage}
\caption{The left panel showcases a (typical) comparison between the approximations of the posterior mean vector given by our iterative scheme ($\mathbf{u}_{\text{NMF}}$ on the x-axis) and a naive Gibbs sampler ($\mathbf{u}_{\text{Gibbs}}$ on the y-axis) when $n = 2000$ and $p = 100$. These two estimators also have comparable mean square errors (MSEs) with respect to the true signal $\bbeta^{\star}$, as outlined in the table on the right (for $n = 1000$ and $p = 50$). The average and standard deviation were computed based on 10 repeated experiments. Entries of the design matrix $\mathbf{X}$ were sampled i.i.d. from $\mathcal{N}(0, 0.01 / n)$. Given the design, we sample $\mathbf{y}$ from a logistic regression model; the coefficients of the regression model are sampled i.i.d. from $\pi$, i.e., these two figure and table were generated assuming a well-specified setting.}
\label{fig:discrete_prior}
\end{figure}

\section{Discussion} 
\label{sec:discussion} 
In this section discuss our assumptions and collect some directions for future enquiry. 
\begin{itemize}
\item[(i)] \textbf{Design Assumptions:} Recall that most of our results are derived under Assumption~\ref{assump:design} on the design matrix $\mathbf{X}$. As remarked after Theorem~\ref{thm:covering}, there are well-known settings beyond Assumption~\ref{assump:design} where the NMF approximation is expected to fail. In particular, the Thouless-Anderson-Palmer (TAP) approximation is expected to yield a tight approximation to the log-partition function in the proportional asymptotic regime (i.e. $n \propto p$) with an i.i.d. gaussian design $\mathbf{X}$. Similarly, the Bethe approximation is expected to be tight under a proportional asymptotic regime (i.e. $n \propto p$) when the design matrix $\mathbf{X}$ corresponds to a sparse bi-partite graph with fixed average degrees. We believe rigorous investigations into these advanced mean-field approximations is an extremely fruitful direction for further enquiry.


\item[(ii)] \textbf{Unbounded Priors:} Throughout, we impose that the prior $\pi$ has a bounded support. Existing approaches to establish the NMF approximation typically require boundedness of the base distributions. The only counterexample is \cite{barbier2020strong}; however, they work under log-concavity assumptions on the base measures. From a practical perspective, it is important to extend these results to settings with unbounded priors (potentially with some additional tail-decay conditions). We believe this is an interesting direction for follw-up research, and leave this for future investigations.  
\item[(iii)] \textbf{The i.i.d. gaussian design case:} In \cite{barbier2019optimal}, the authors study Bayesian inference for GLMs under an i.i.d. gaussian design and a proportional asymptotic regime (i.e., $n \propto p$).  In this case, they view the model as a \emph{planted spin glass} and use ideas developed in this context (e.g. interpolation, cavity method) to analyze this model. Finally, this analysis assumes a well-specified setting, i.e. the response $\mathbf{y}$ is generated from the same model family; as remarked before, one has access to the powerful Nishimori identities in this setting. In sharp contrast, our results are valid in the regime $p = o(n)$, and thus complement these existing results. In addition, our results can accommodate correlations among the features, and are valid beyond the well-specified setting. While completing this manuscript, we came to know that in ongoing work \cite{saenz2024glm}, the authors are looking into Bayesian inference for misspecified GLMs under an i.i.d. Gaussian design and proportional asymptotic regime. These results will be in complementary scaling regimes, and thus are directly incomparable. In addition, their analysis builds on spin-glass based tools, and are thus distinct from the approach adopted in this paper. 
\end{itemize}

\section{Proofs} 
\label{sec:proofs} 

We establish Theorems~\ref{thm:covering} and \ref{thm:covering_suff} in Section~\ref{section:proofs12}. We prove Theorem~\ref{thm:alg_glm} in Section~\ref{sec:proofs3}. We establish Theorem~\ref{sec:proof4} in Section~\ref{thm:structure}.   

\subsection{Proof of Theorem \ref{thm:covering} and Theorem \ref{thm:covering_suff} } 
\label{section:proofs12}

\begin{proof}[Proof of Theorem \ref{thm:covering}]
    We prove this result using the machinery derived in \cite{austin2019structure}. Consider the Hamiltonian $H(\boldsymbol{\beta}) = \sum_{i = 1}^{n}   \left (  - y_i \langle \mathbf{x}_i, \bbeta \rangle + b(\langle \mathbf{x}_i, \bbeta \rangle) \right ) := \sum_{i=1}^{n} g_i(\langle \mathbf{x}_i, \bbeta \rangle)$. 
    
    We first define the discrete gradient function: fix $\boldsymbol{\beta}, \boldsymbol{\sigma} \in [-1,1]^p$. The discrete gradient of $H(\cdot)$ at $\boldsymbol{\beta}$, evaluated at $\boldsymbol{\sigma}$ is defined as 
    \begin{align}
        \nabla H(\boldsymbol{\sigma}; \boldsymbol{\beta}) = \sum_{j=1}^{p} \sum_{i=1}^{n} \left( g_i(\langle \mathbf{x}_i , \boldsymbol{\beta}_{\sigma_j,j} \rangle) -  g_i(\langle \mathbf{x}_i , \boldsymbol{\beta}_{0,j} \rangle) \right), \nonumber 
    \end{align}
    where $\boldsymbol{\beta}_{y,j} $ is the vector where the $j^{th}$ entry is $y$, and the remaining entries are the same as $\boldsymbol{\beta}$. According to the formalism introduced in Austin \cite{austin2019structure}, if these discrete gradient functions can be efficiently covered by a net of size $e^{o(p)}$, then NMF is correct to leading order. We will show that under the theses of this theorem, there exists an efficient covering of the discrete gradient functions. 

    To this end, we use Taylor expansion on the function $f_i:[0,1] \to \mathbb{R}$, $f_i(t) = g_i(\langle \mathbf{x}_i, \bbeta_{0,j} \rangle + t x_{ij} \sigma_j)$, to conclude that  
    \begin{align}
        g_i(\langle \mathbf{x}_i , \boldsymbol{\beta}_{\sigma_j,j} \rangle ) = g_i (\langle \mathbf{x}_i, \boldsymbol{\beta}_{0,j} \rangle) + x_{ij} \sigma_j g'_i(\langle \mathbf{x}_i, \boldsymbol{\beta}_{0,j}\rangle) + x_{ij}^2\sigma_j^2 \int_0^1 (1-t) g''_i(\langle \mathbf{x}_i, \boldsymbol{\beta}_{0,j} + t x_{ij} \sigma_j \rangle ) dt. \nonumber  
    \end{align}

    We note that 
    \begin{align}
        \Big| &\sum_{j=1}^{p} \sum_{i=1}^{n} \left( g_i(\langle \mathbf{x}_i , \boldsymbol{\beta}_{\sigma_j,j} \rangle) -  g_i(\langle \mathbf{x}_i , \boldsymbol{\beta}_{0,j} \rangle) \right)   
        - \sum_{j=1}^{p} \sum_{i=1}^{n} \Big(  x_{ij} \sigma_j g_i'(\langle \mathbf{x}_i, \boldsymbol{\beta}_{0,j}\rangle) + \frac{x_{ij}^2\sigma_j^2}{2} g_i''(\langle \mathbf{x}_i, \boldsymbol{\beta}_{0,j} \rangle ) \Big) \Big| \nonumber \\
        &\leq \sum_{i=1}^{n} \sum_{j=1}^{p} x_{ij}^2 \int_0^1 | b''(\langle \mathbf{x}_i, \bbeta_{0,j} \rangle + t x_{ij} \sigma_j) - b''(\langle \mathbf{x}_i, \bbeta_{0,j} \rangle ) | dt, \nonumber  
    \end{align}
    where we used the fact that $\int_0^1 (1-t) \mathrm{d} t = 1/2$ and $|\sigma_j| \le 1$. Using Assumption \ref{assump:cont}, for any $\varepsilon>0$, there exists $\delta>0$ such that 
   \begin{align}
   \Big| &\sum_{j=1}^{p} \sum_{i=1}^{n} \left( g_i(\langle \mathbf{x}_i , \boldsymbol{\beta}_{\sigma_j,j} \rangle) -  g_i(\langle \mathbf{x}_i , \boldsymbol{\beta}_{0,j} \rangle) \right)   
        - \sum_{j=1}^{p} \sum_{i=1}^{n} \Big(  x_{ij} \sigma_j g_i'(\langle \mathbf{x}_i, \boldsymbol{\beta}_{0,j}\rangle) + \frac{x_{ij}^2\sigma_j^2}{2} g_i''(\langle \mathbf{x}_i, \boldsymbol{\beta}_{0,j} \rangle ) \Big) \Big| \nonumber \\ 
        &\lesssim \varepsilon \sum_{i=1}^{n} \sum_{j=1}^{p} x_{ij}^2 + \sum_{i=1}^n \sum_{j=1}^{p} x_{ij}^2 \mathbf{1}(|x_{ij}|> \delta) = o(p). \label{eq:sec_order} 
   \end{align}  
   
   The last equality follows from first sending $p \to \infty$ and noting that $\varepsilon>0$ is arbitrary. We will use similar equalities in our subsequent computations, and will omit the corresponding justification whenever there is no scope for confusion. 


Thus at a $o(p)$ cost, the discrete gradient can be approximated by an intermediate function 
\begin{align}
    G_1(\boldsymbol{\sigma}; \boldsymbol{\beta}) = \sum_{j=1}^{p} \sum_{i=1}^{n} \Big(  x_{ij} \sigma_j g_i'(\langle \mathbf{x}_i, \boldsymbol{\beta}_{0,j}\rangle) + \frac{x_{ij}^2\sigma_j^2}{2} g_i''(\langle \mathbf{x}_i, \boldsymbol{\beta}_{0,j} \rangle ) \Big). \nonumber 
\end{align}
Again, using Taylor expansion, 
\begin{align}
    &\sum_{j=1}^{p} \sum_{i=1}^{n} x_{ij} \sigma_j g'_i(\langle \mathbf{x}_i, \boldsymbol{\beta}_{0,j} \rangle)  
    = \sum_{j=1}^{p} \sum_{i=1}^{n} x_{ij} \sigma_j g_i'(\langle \mathbf{x}_i, \boldsymbol{\beta} \rangle)  - \sum_{j=1}^{p} \sum_{i=1}^{n} x_{ij}^2 \sigma_j \beta_j \int_0^1 g''_i(\langle \mathbf{x}_i, \boldsymbol{\beta}\rangle - t x_{ij} \beta_j ) dt.  \nonumber 
 \end{align}
 This implies 
 \begin{align}
 \Big| \sum_{j=1}^{p} \sum_{i=1}^{n} x_{ij} \sigma_j g'_i(\langle \mathbf{x}_i, \boldsymbol{\beta}_{0,j} \rangle)  - \Big( \sum_{j=1}^{p} \sum_{i=1}^{n} x_{ij} \sigma_j g_i'(\langle \mathbf{x}_i, \boldsymbol{\beta} \rangle) - \sum_{j=1}^{p} \sum_{i=1}^{n} x_{ij}^2 \sigma_j \beta_j  g''_i(\langle \mathbf{x}_i, \boldsymbol{\beta}\rangle)  \Big)  \Big| = o(p), \nonumber 
 \end{align} 
 where the last equality follows by the same argument as \eqref{eq:sec_order}.  
 Finally, applying the same argument as \eqref{eq:sec_order} to second term in $G_1$, we have, 
 \begin{align}
     \sum_{j=1}^{p} \sum_{i=1}^{n} x_{ij}^2 \sigma_j^2 g_i''(\langle x_i , \boldsymbol{\beta}_{0,j} \rangle ) = \sum_{j=1}^{p} \sum_{i=1}^{n} x_{ij}^2 \sigma_j^2 g_i''(\langle \mathbf{x}_i, \boldsymbol{\beta}\rangle ) + o(p) . \nonumber 
 \end{align}
 In turn, this yields the following approximation to the discrete gradient function 
 \begin{align}
     G_2(\boldsymbol{\sigma}; \boldsymbol{\beta}) 
     = \sum_{j=1}^{p} \sum_{i=1}^{n} x_{ij} \sigma_j g_i'(\langle \mathbf{x}_i, \boldsymbol{\beta} \rangle) - \sum_{j=1}^{p} \sum_{i=1}^{n} x_{ij}^2 \sigma_j \beta_j g_i''(\langle \mathbf{x}_i, \boldsymbol{\beta} \rangle) + \frac{1}{2} \sum_{j=1}^{p} \sum_{i=1}^{n} x_{ij}^2 \sigma_j^2 g_i''(\langle \mathbf{x}_i , \boldsymbol{\beta} \rangle ). \nonumber 
 \end{align}
Observe that the last term can be approximated as follows: 

\begin{align}
    \sum_{j=1}^{p} \sum_{i=1}^{n} x_{ij}^2 \sigma_j^2 g_i''(\langle \mathbf{x}_i, \boldsymbol{\beta}\rangle) = \sum_{j=1}^{p} \sum_{i=1}^{n} x_{ij}^2 \sigma_j^2 g_i''(0) + \sum_{j=1}^{p} \sum_{i=1}^{n} x_{ij}^2 \sigma_j^2  (g_i''(\langle \mathbf{x}_i, \boldsymbol{\beta}\rangle) - g_i''(0)). \nonumber 
\end{align}

Now, define $\mathcal{S}(\boldsymbol{\beta}, \varepsilon) :=\{ i \in [n]: |\langle \mathbf{x}_i, \boldsymbol{\beta} \rangle | > \varepsilon\}$. By Markov inequality, 
\begin{align}
    |\mathcal{S}(\boldsymbol{\beta},\varepsilon) | \leq \frac{1}{\varepsilon^2}\sum_{i \in \mathcal{S}(\boldsymbol{\beta}, \varepsilon)} \langle \mathbf{x}_i, \boldsymbol{\beta} \rangle^2 \leq \frac{1}{\varepsilon^2}\sum_{i \in [n]} \langle \mathbf{x}_i, \boldsymbol{\beta} \rangle^2  \leq \frac{\| \mathbf{X}^{\top} \mathbf{X}\|_2}{\varepsilon^2} p. \nonumber 
\end{align}

We note that 
\begin{align}
    &\sum_{j=1}^{p} \sum_{i=1}^{n} x_{ij}^2 \sigma_j^2 (g_i''(\langle \mathbf{x}_i, \boldsymbol{\beta} \rangle) - g_i''(0)) \nonumber \\
    &= \sum_{i \in \mathcal{S}(\boldsymbol{\beta},\varepsilon)^c} \sum_{j=1}^{p} x_{ij}^2 \sigma_j^2 (g_i''(\langle \mathbf{x}_i, \boldsymbol{\beta} \rangle) - g_i''(0)) + \sum_{i \in \mathcal{S}(\boldsymbol{\beta},\varepsilon)} \sum_{j=1}^{p} x_{ij}^2 \sigma_j^2 (g_i''(\langle \mathbf{x}_i, \boldsymbol{\beta} \rangle) - g_i''(0)). \nonumber 
\end{align}

Next, we observe that 
\begin{align}
    &\sum_{i \in \mathcal{S}(\boldsymbol{\beta},\varepsilon)^c} \sum_{j=1}^{p} x_{ij}^2 \sigma_j^2 (g_i''(\langle \mathbf{x}_i, \boldsymbol{\beta} \rangle) - g_i''(0)) 
    \leq \sup_{|x| \leq \varepsilon} |b''(x) - b''(0)| \mathrm{Tr}(\mathbf{X}^{\top} \mathbf{X}) + \sup_{|S| \leq Cp} \sum_{i \in S} \sum_{j=1}^{p} x_{ij}^2 = o(p), \nonumber 
 \end{align}
Note that $C>0$ depends on $\varepsilon>0$ and $C \to \infty$ as $\varepsilon \to 0$. 
where the last inequality uses the observation $g_i''(x) = b''(x)$ and the fact that $\sum_{i,j} x_{ij}^2 = \mathrm{Tr}(\mathbf{X}^{\top} \mathbf{X}) $. 
 In turn, we obtain the following approximation of the discrete gradient function.  
 \begin{align}
     G_3(\boldsymbol{\sigma}; \boldsymbol{\beta}) &= \sum_{j=1}^{p} \sum_{i=1}^{n} x_{ij} \sigma_j g_i'(\langle \mathbf{x}_i, \boldsymbol{\beta} \rangle) - \sum_{j=1}^{p} \sum_{i=1}^{n} x_{ij}^2 \sigma_j \beta_j g_i''(\langle \mathbf{x}_{i}, \boldsymbol{\beta}\rangle). \nonumber \\
     &= - \bsigma^{\top}\mathbf{X}^\top \mathbf{y} +  \boldsymbol{\sigma}^{\top} \mathbf{X}^{\top}b'(\mathbf{X} \boldsymbol{\beta}) - \sum_{j=1}^{p} \sum_{i=1}^{n} x_{ij}^2 \sigma_j \beta_j b''(\langle \mathbf{x}_{i}, \boldsymbol{\beta}\rangle). \label{eq:G3_defn} 
 \end{align}
 We note that we have dropped a term $\sum_{j=1}^{p} \sum_{i=1}^{n} x_{ij}^2 \sigma_j^2 g_i''(0)$ in the display above as it does not depend on $\bbeta$---it does not affect the subsequent net construction. 

 Consider now the function $\psi: [0,1] \to \mathbb{R}^n$, $t \mapsto \sum_{j=1}^{p} \sum_{i=1}^{n} x_{ij} \sigma_j g_i'(\langle \mathbf{x}_i, t \boldsymbol{\beta} \rangle)$. By Taylor expansion, we have, 
 \begin{align}
     \psi(1) = \psi(0) + \psi'(t^*) \nonumber 
 \end{align}
 for some $t_* \in [0,1]$. By direct computation, 
 \begin{align}
     \psi'(t^*) = \sum_{j=1}^{p} \sum_{i=1}^{n} x_{ij} \sigma_j b''(\langle \mathbf{x}_i , t^* \boldsymbol{\beta} \rangle) \langle \mathbf{x}_i , \boldsymbol{\beta} \rangle.  \nonumber 
 \end{align}

 Finally we again have
 \begin{align}
     &\Big|\sum_{j=1}^{p} \sum_{i=1}^{n} x_{ij}^2 \sigma_j \beta_j b''(\langle \mathbf{x}_i, \boldsymbol{\beta}\rangle) - \sum_{j=1}^{p} \sum_{i=1}^{n} x_{ij}^2 \sigma_j \beta_j b''(\langle \mathbf{x}_i, t_* \boldsymbol{\beta}\rangle ) \Big| \nonumber \\
     &\leq \sup_{|x|, |y| \leq \varepsilon} |b''(x) - b''(y)| \sum_{i=1}^{n}\sum_{j=1}^{p} x_{ij}^2 + \|b''\|_{\infty} \sum_{i \in \mathcal{S}(\boldsymbol{\beta},\varepsilon) \cup \mathcal{S}(t_*\boldsymbol{\beta}, \varepsilon)} \sum_{j=1}^{p}  x_{ij}^2 \nonumber \\ 
     &\leq \sup_{|x|, |y| \leq \varepsilon} |b''(x) - b''(y)| \sum_{i=1}^{n}\sum_{j=1}^{p} x_{ij}^2 + \|b''\|_{\infty}  \sup_{|S|\leq Cp } \sum_{i \in S} \sum_{j=1}^{p} x_{ij}^2 = o(p). 
 \end{align}

This implies we have to cover the set of functions 
\begin{align} 
\Big\{ \sigma \mapsto \sum_{j=1}^{p} \sum_{i=1}^{n} x_{ij} \sigma_j b''(\langle \mathbf{x}_i, t_* \boldsymbol{\beta}\rangle) \langle \mathbf{x}_i, \boldsymbol{\beta} \rangle - \sum_{j=1}^{p} \sum_{i=1}^{n} x_{ij}^2 \sigma_j \beta_j b''(\langle \mathbf{x}_i, t_* \boldsymbol{\beta} \rangle): \boldsymbol{\beta} \in [-1,1]^p  \Big\}. \label{eq:int_func} 
\end{align}  
 
 Note that 
 \begin{align}
 &\sum_{j=1}^{p} \sum_{i=1}^{n} x_{ij} \sigma_j b''(\langle \mathbf{x}_i, t_* \boldsymbol{\beta}\rangle) \langle \mathbf{x}_i, \boldsymbol{\beta} \rangle - \sum_{j=1}^{p} \sum_{i=1}^{n} x_{ij}^2 \sigma_j \beta_j b''(\langle \mathbf{x}_i, t_* \boldsymbol{\beta} \rangle) \nonumber = \bsigma^{\top} \mathbf{A}_{t_* \bbeta} \bbeta. \nonumber 
 \end{align} 
 
 Note that as long as the set of vectors $\{ \mathbf{A}_{\bbeta} \bgamma: \bbeta, \bgamma \in [-1,1]^p\}$ has an efficient cover in $L^1$ norm, the linear functions can be covered in the $L^\infty$ norm.

\end{proof}

\begin{proof}[Proof of Theorem \ref{thm:covering_suff}] 
Using Theorem \ref{thm:covering}, it suffices to cover $\{\mathbf{A}_{\bbeta} \bgamma: \bbeta, \bgamma \in [-1,1]^{p} \}$. For any $\varepsilon>0$, without loss of generality, let $\{\mathbf{M}^{\varepsilon}_1, \cdots, \mathbf{M}^{\varepsilon}_{N(p,\varepsilon)}\}$ be the smallest $\varepsilon$-net in $L^2$ operator norm for the collection $\{\mathbf{A}_{\bbeta}: \bbeta \in [-1,1]^p\}$. In particular, this implies that for any $1\leq j \leq N(p, \varepsilon)$, there exists $\bbeta \in [-1,1]^p$ such that $\|\mathbf{A}_{\bbeta} - \mathbf{M}^{\varepsilon}_j \|_2 < \varepsilon$---if not, we can reduce the size of the net by dropping $\mathbf{M}^{\varepsilon}_j$. Define $\mathcal{C}(\varepsilon) = \{ \mathbf{M}^\varepsilon_j \bgamma: 1\leq j \leq N(p,\varepsilon) , \bgamma \in [-1,1]^{p}\}$. Observe that for any $\bbeta \in [-1,1]^p$, there exists $1\leq j \leq N(p, \varepsilon)$ such that 
for any $\bgamma \in [-1,1]^{p}$, 
\begin{align}
\| \mathbf{A}_{\bbeta} \bgamma - \mathbf{M}^{\varepsilon}_j \bgamma \|_1 \leq p \| \mathbf{A}_{\bbeta} - \mathbf{M}^{\varepsilon}_j \|_2 \leq p \varepsilon, \nonumber 
\end{align} 
where the first inequality follows from Cauchy-Schwarz inequality. Thus it suffices to cover the set $\mathcal{C}(\varepsilon)$ in $L^1$ norm. Note that for any $1\leq j \leq N(p,\varepsilon)$, there exists $\bbeta \in [-1,1]^p$ 
\begin{align}
\mathrm{Tr}((\mathbf{M}^{\varepsilon}_j)^2) = \| \mathbf{M}^{\varepsilon}_j \|_F^2 \leq 2 \| \mathbf{A}_{\bbeta} \|_F^2 + 2 p \varepsilon^2 \leq 4 p \varepsilon^2.  \nonumber 
\end{align} 
For each $1\leq j \leq N(p,\varepsilon)$, we use \cite[Lemma 3.4]{basak2017universality} to construct a cover of the set $\{ \mathbf{M}^\varepsilon_j \bgamma: \bgamma \in [-1,1]^p\}$. Denote this set as $\mathcal{C}_j(\varepsilon)$ and denote its size as $s_j(\varepsilon)$. In particular, $\cup_{j=1}^{N(p,\varepsilon)} \mathcal{C}_j$ is a cover of $\mathcal{C}(\varepsilon)$ with size $\sum_{j=1}^{N(p,\varepsilon)} s_j (\varepsilon)$. Finally we note that 
\begin{align}
\lim_{\varepsilon \to 0} \lim_{p \to \infty} \frac{1}{p} \log  \sum_{j=1}^{N(p,\varepsilon)} s_j (\varepsilon) =0. \nonumber 
\end{align}
This completes the proof.  
\end{proof}

\begin{proof}[Proof of Corollary \ref{cor:applications}]
We start with the proof of part (i). Using Theorem \ref{thm:covering_suff}, we have to cover the set of vectors $\{ \mathbf{A}_{\bbeta} \bgamma: \bbeta, \bgamma \in [-1,1]^p\}$ in $L^1$ norm. Recalling $\mathbf{A}_{\bbeta} = \mathbf{X}^{\top} \mathbf{D}_{\bbeta} \mathbf{X} - \mathrm{diag}(\mathbf{X}^{\top} \mathbf{D}_{\bbeta} \mathbf{X})$, we note that 
\begin{align}
\mathbf{X}^{\top} \mathbf{D}_{\bbeta} \mathbf{X} = \sum_{i=1}^{n} b''( \langle \mathbf{x}_i, \bbeta \rangle ) \mathbf{x}_i \mathbf{x}_i^{\top}. \nonumber 
\end{align} 
Setting $\mathcal{S}(\bbeta, \varepsilon) = \{ i \in [n] : |\langle \mathbf{x}_i, \bbeta \rangle| > \varepsilon \}$, we have
\begin{align}
\mathbf{X}^{\top} \mathbf{D}_{\bbeta} \mathbf{X} = \sum_{i \in \mathcal{S}(\bbeta, \varepsilon)^c} b''( \langle \mathbf{x}_i, \bbeta \rangle ) \mathbf{x}_i \mathbf{x}_i^{\top} + \sum_{i \in \mathcal{S}(\bbeta, \varepsilon)} b''( \langle \mathbf{x}_i, \bbeta \rangle ) \mathbf{x}_i \mathbf{x}_i^{\top}. \nonumber 
\end{align}
Using the definition of $\mathcal{S}(\bbeta, \varepsilon) = \{ i \in [n] : |\langle \mathbf{x}_i, \bbeta \rangle| > \varepsilon \}$ and Lemma \ref{lem:perturbation}, we have
\begin{align}
\| \mathbf{X}^{\top} \mathbf{D}_{\bbeta} \mathbf{X} - b''(0) \mathbf{X}^{\top} \mathbf{X} \|_{\mathrm{op}} \leq \delta(\varepsilon) \| \mathbf{X}^{\top} \mathbf{X} \|_{\mathrm{op}} + \| b'' \|_{\infty} \sup_{|S| \leq Cp} \Big\| \sum_{i \in S} \mathbf{x}_i \mathbf{x}_i^{\top}  \Big\|_{\mathrm{op}} \lesssim \delta(\varepsilon) + o(1), \nonumber  
\end{align} 
where $\delta(\varepsilon) = \max_{x \in [-\varepsilon,\varepsilon]} | b''(x) - b''(0)| $.
In addition, Lemma \ref{lemma:diag_dominance} implies 
\begin{align}
\|\mathrm{diag}(\mathbf{X}^{\top} \mathbf{D}_{\bbeta} \mathbf{X}) - b''(0)  \mathrm{diag}(\mathbf{X}^{\top} \mathbf{X}) \|_{\mathrm{op}} \leq \| \mathbf{X}^{\top} \mathbf{D}_{\bbeta} \mathbf{X} - b''(0) \mathbf{X}^{\top} \mathbf{X} \|_{\mathrm{op}} = o(1).  \nonumber 
\end{align} 
Combining, we immediately have, $\sup_{\bbeta \in [-1,1]^p} \| \mathbf{A}_{\bbeta} - b''(0) \mathbf{A} \|_{\mathrm{op}} = o(1)$. The set $\{ \mathbf{A}_{\bbeta} : \bbeta \in [-1,1]^{p} \}$ can be covered by the singleton set $\{ b''(0) \mathbf{A} \}$. Further, if $\mathrm{Tr}(\mathbf{A}^2) = o(p)$, $\sup_{\bbeta \in [-1,1]^{p} } \mathrm{Tr}(\mathbf{A}_{\bbeta}^2) = o(p)$ --- this follows from eigenvalue interlacing. The proof is now complete by invoking Theorem \ref{thm:covering_suff}.

 We now turn to the proof of part (ii). We first verify that Assumption~\ref{assump:design} holds for designs with i.i.d. gaussian rows. Specifically, $n \gg p$ implies that $\| \mathbf{X}^{\top} \mathbf{X} - \boldsymbol{\Sigma}_p \|_2 = o(1)$ \cite{vershynin2018high}. Similarly, setting $x_{ij} = z_{ij}/\sqrt{n}$, we obtain 
 \begin{align*}
 \mathbb{E}[\sum_{i=1}^{n} \sum_{j=1}^{p} x_{ij}^2 \mathbf{1}(|x_{ij}| > \delta)] = \sum_{j=1}^{p} \mathbb{E}[z_{1j}^2 \mathbf{1}(|z_{ij}| > \delta \sqrt{n})] = o(p),
 \end{align*}
 where the last equality follows from $\| \boldsymbol{\Sigma}_p \|_2 \leq C$. Finally, for $S \subset [n]$, by Bernstein's inequality
 \begin{align*}
 \mathbb{P}\Big[  \Big| \sum_{i \in S} \| (\mathbf{x}_i\|_2^2 - \mathbb{E}[\|\mathbf{x}_i \|_2^2] )   \Big| > t \Big] \leq 2 \exp(- c \frac{t}{K}),  
 \end{align*} 
 where $c>0$ is a universal constant, and $K$ is the 1-Orlicz norm of $\| \mathbf{x}_i \|_2^2$. Now, we note that under the given assumptions on $\Sigma_p$, $K = O(p/n)$. We can now verify condition (iii) in Assumption~\ref{assump:design} by union bound, noting that $\log {n \choose Cp} = n \log (p/n)$.

Now, using the proof of \cite[Corollary 2]{mukherjee2021variational}, we know that $\mathrm{Tr}(\mathbf{A}^2) =o(p)$ with high probability in this setting.  Let $\mathbf{z}_i \sim \mathcal{N}(0,\mathbf{\Sigma}_p)$. Then there exists $C_1>0$ (depending on $\mathbf{\Sigma}_p$) and $C'>0$ depending on $C_1$ such that  \cite{vershynin2018high}
 \begin{align}
 \mathbb{P}\Big( \sigma_{\max}(\mathbf{Z}_S) > \sqrt{|S|} +  C_1 \sqrt{p} + t \Big) \leq 2 \exp(- C' t^2). \nonumber 
 \end{align} 
We choose $t = t_0 \sqrt{n}$ so that 
 \begin{align}
 \mathbb{P}\Big( \| \sum_{i \in S} \mathbf{x}_i \mathbf{x}_i^{\top} \|_{\mathrm{op}} > 2 t_0^2   \Big) \leq 2 \exp(- C' n t_0^2). \nonumber 
 \end{align} 
The proof of (ii) now follows by union bound. 

Finally, we move to the proof of part (iii). Assume that $p/n \to \kappa \in (0,\infty)$. For any $\bbeta \in [-1,1]^p$, we have, 
\begin{align}
\mathrm{Tr}(\mathbf{A}_{\bbeta}^2)  = \| \mathbf{A}_{\bbeta} \|_F^2 \leq 2 \| \mathbf{X}^{\top} \mathbf{D}_{\bbeta} \mathbf{X} \|_F^2 \leq 2 \lambda_{\max} (\mathbf{X}^{\top} \mathbf{D}_{\bbeta} \mathbf{X}) \mathrm{Tr}(\mathbf{X}^{\top} \mathbf{D}_{\bbeta} \mathbf{X}). \nonumber 
\end{align} 
In addition, note that $\mathrm{Tr}(\mathbf{X}^{\top} \mathbf{D}_{\bbeta} \mathbf{X}) \leq \| b'' \|_{\infty} \mathrm{Tr}(\mathbf{X}^{\top} \mathbf{X}) = o(p)$ using \eqref{eq:norm}. Similarly, we have that $\lambda_{\max} (\mathbf{X}^{\top} \mathbf{D}_{\bbeta} \mathbf{X}) \leq \| b'' \|_{\infty} \lambda_{\max}(\mathbf{X}^{\top} \mathbf{X}) = O(1)$. Thus we have $\sup_{\bbeta \in [-1,1]^{p}} \mathrm{Tr}(\mathbf{A}_{\bbeta}^2) = o(p)$. Now we construct a sub-exponential size net (in operator norm) for the set of matrices $\{\mathbf{A}_{\bbeta} : \bbeta \in [-1,1]^{p} \}$. 
Using Lemma \ref{lemma:diag_dominance}, it suffices to cover the set $\{ \mathbf{X}^{\top} \mathbf{D}_{\bbeta} \mathbf{X} : \bbeta \in [-1,1]^{p} \}$. To this end, we start with an $L^2$-net $\mathcal{N}_1(p, \varepsilon)$ of $\{\mathbf{X} \bbeta : \bbeta \in [-1,1]^{p}\}$. As $p/n \to \kappa$, there exists an efficient net such that $\lim_{\varepsilon \to 0} \lim_{p \to \infty} \frac{1}{p} \log |\mathcal{N}_1(p,\varepsilon)| = 0$. Consider first the set 
$\{ \sum_{i=1}^{n} b''(v_i) \mathbf{x}_i \mathbf{x}_i^{\top} : \mathbf{v} \in \mathcal{N}_1(p,\varepsilon)\}$. For $\bbeta \in [-1,1]^p$, assume that $\| \mathbf{X} \bbeta - \mathbf{v}\|_2^2 \leq p \varepsilon$ for $\mathbf{v} \in \mathcal{N}_1(p,\varepsilon)$. For any $\delta>0$, we have, 
\begin{align}
|\{ i: |\langle \mathbf{x}_i, \bbeta \rangle - v_i | > \delta\}| \leq \frac{p \varepsilon }{\delta^2}. \nonumber 
\end{align} 
Define $\mathcal{S}(\bbeta, \mathbf{v}, \delta) = \{ i : |\langle \mathbf{x}_i, \bbeta \rangle - v_i | \leq  \delta\}$. Define $\mathcal{C}(\varepsilon)$ be an $\varepsilon$-net of $[-2\|b''\|_{\infty}, 2\|b''\|_{\infty}]$, and set $\mathcal{N}_2(p, \varepsilon ) = \{ \sum_{i \in S} w_i \mathbf{x}_i \mathbf{x}_i^{\top}: S \subset [n], |S| \leq p \varepsilon , \mathbf{w} \in \mathcal{C}_{\varepsilon}^{|S|} \}$. We note that 
\begin{align}
|\mathcal{N}_2(p,\varepsilon) | \leq {p \choose p \varepsilon} |\mathcal{C}_{\varepsilon} |^{p \varepsilon}. \nonumber 
\end{align} 
Noting that $|\mathcal{C}_{\varepsilon}| \leq 1/\varepsilon$, we have that $\lim_{\varepsilon \to 0} \lim_{p \to \infty} \frac{1}{p} \log |\mathcal{N}_2(p,\varepsilon)| =0$. Finally, we define the net 
\begin{align}
\mathcal{N}(p,\varepsilon) = \{ \mathbf{M}_1 + \mathbf{M}_2: \mathbf{M}_1 \in \mathcal{N}_1(p, \varepsilon), \mathbf{M}_2 \in \mathcal{N}_2(p, \varepsilon)\}.  \nonumber 
\end{align}
We claim that this net suffices for our purposes. To see this, note that 
\begin{align}
\sum_{i=1}^{n} b''(\langle \mathbf{x}_i, \bbeta \rangle ) \mathbf{x}_i \mathbf{x}_i^{\top}  = \sum_{i \in \mathcal{S}(\bbeta, \mathbf{v} , \varepsilon^{1/3}) } b''(\langle \mathbf{x}_i, \bbeta \rangle ) \mathbf{x}_i \mathbf{x}_i^{\top}  + \sum_{i \in \mathcal{S}(\bbeta, \mathbf{v} , \varepsilon^{1/3})^c }b''(\langle \mathbf{x}_i, \bbeta \rangle ) \mathbf{x}_i \mathbf{x}_i^{\top}. \nonumber 
\end{align}
Using Lemma \ref{lemma:diag_dominance}, 
\begin{align}
\Big\|   \sum_{i \in \mathcal{S}(\bbeta, \mathbf{v} , \varepsilon^{1/3}) } b''(\langle \mathbf{x}_i, \bbeta \rangle ) \mathbf{x}_i \mathbf{x}_i^{\top}  - \sum_{i \in \mathcal{S}(\bbeta, \mathbf{v} , \varepsilon^{1/3}) } b''(v_i ) \mathbf{x}_i \mathbf{x}_i^{\top} \Big\|_{\mathrm{op}} \lesssim \delta(\varepsilon). \nonumber 
\end{align} 
Moreover, using Lemma \ref{lemma:diag_dominance} again, we can find $\mathbf{w} \in \mathcal{C}_\varepsilon^{|\mathcal{S}(\bbeta, \mathbf{v} , \varepsilon^{1/3})^c|}$ such that 
\begin{align}
\Big\|  \sum_{i \in \mathcal{S}(\bbeta, \mathbf{v} , \varepsilon^{1/3})^c} b''(\langle \mathbf{x}_i, \bbeta \rangle ) \mathbf{x}_i \mathbf{x}_i^{\top} - \sum_{i \in \mathcal{S}(\bbeta, \mathbf{v} , \varepsilon^{1/3})^c} w_i   \mathbf{x}_i \mathbf{x}_i^{\top}  \Big\|_{\mathrm{op}} \leq \varepsilon. \nonumber 
\end{align} 
Combining, we have, 
\begin{align}
\| \mathbf{X}^{\top} \mathbf{D}_{\bbeta} \mathbf{X} - (\mathbf{M}_1 + \mathbf{M}_2) \|_{\mathrm{op}} \leq (\delta(\varepsilon) + \varepsilon) \| \mathbf{X}^{\top} \mathbf{X} \|_{\mathrm{op}}, \nonumber
\end{align}
where 
\begin{align}
\mathbf{M}_1 := \sum_{i \in \mathcal{S}(\bbeta, \mathbf{v} , \varepsilon^{1/3}) } b''(v_i ) \mathbf{x}_i \mathbf{x}_i^{\top} \in \mathcal{N}_1(p, \varepsilon), \quad \mathbf{M}_2 := \sum_{i \in \mathcal{S}(\bbeta, \mathbf{v} , \varepsilon^{1/3})^c } w_i \mathbf{x}_i \mathbf{x}_i^{\top} \in \mathcal{N}_1(p, \varepsilon). \nonumber 
\end{align}  

Finally, the net $\mathcal{N}(p,\varepsilon)$ satisfies $\lim_{\varepsilon \to 0} \lim_{p \to \infty} \frac{1}{p} \log |\mathcal{N}(p, \varepsilon)| = 0$. This completes the proof. 

\end{proof}

\subsection{Proof of Theorem \ref{thm:alg_glm}} 
\label{sec:proofs3} 
Let $\mu_{\bbeta}$ denote the following $\bbeta$-indexed distribution,
\begin{align}
        \frac{\mathrm{d} \mu_{\bbeta}}{\mathrm{d} \pi_p}(\sigma) \propto \exp{ \left \{ \sum_{j=1}^p \left [ \sigma_j \sum_{i=1}^n y_i x_{ij} - \sum_{i=1}^n  \left (
        b(x_i^T \bbeta_{\sigma_j,j}) - b(x_i^T \bbeta_{0,j}) \right )\right ] \right \} } = \exp \left  [ \nabla H ( \bsigma; \bbeta) \right ]. \label{eq:beta_dist} 
\end{align}
Then by the first display on Page 23 of \cite{austin2019structure}, we get,
\begin{equation}
\label{eq:optimization_indexed_by_beta}
        \log \mathcal{Z}_p - \left ( \sup_{Q \in \{\mu_{\bbeta}:\bbeta \in [-1,1]^p \}} \left ( \mathbb{E}_{\bsigma \sim Q} \left [ - H (\bsigma) \right ] -\operatorname{D}_{\text{KL}}(Q \| \pi_p )\right )  \right ) = o(p).
\end{equation}
This completes the proof of part (i). 

Next, we establish part (ii). To this end, recall the approximation $G_3(\bsigma; \bbeta)$ to $\nabla H(\bsigma; \bbeta)$ from \eqref{eq:G3_defn}. \eqref{eq:G3_defn} implies that 
\begin{align}
\sup_{\bsigma \in [-1,1]^p} \Big| G_3(\bsigma ; \bbeta) - \frac{b''(0) }{2}  \bsigma^{\top} \mathrm{diag}(\mathbf{X}^{\top} \mathbf{X}) \bsigma - \nabla H(\bsigma; \bbeta) \Big| = o(p). \nonumber 
\end{align} 

For $\bbeta \in [-1,1]^p$, define $V(\bbeta)= \mathbf{X}^{\top} \mathbf{y} - \mathbf{X}^{\top} b'(\mathbf{X}\bbeta) + b''(0) \mathrm{diag}(\mathbf{X}^{\top}\mathbf{X}) \bbeta$, such that 
\begin{align}
G_3(\bsigma; \bbeta) = - \bsigma^{\top} V(\bbeta). \nonumber 
\end{align} 

First observe that 
\begin{align}
D_{\mathrm{KL}}( \mu_{\bbeta} \| \pi_{(V(\bbeta), \mathbf{d})}) = \mathbb{E}_{\mu_{\bbeta}} \Big[ \log \frac{\mathrm{d}\mu_{\bbeta} }{\mathrm{d}\pi_{(V(\bbeta), \mathbf{d})} } \Big] = o(p). 
\end{align} 

Using Marton's transportation-cost inequality, we have, 
\begin{align}
\bar{d}_{W_1} ( \mathbf{X}, \mathbf{Y})  = o(1),  \label{eq:wasserstein} 
\end{align} 
where $\mathbf{X} \sim \mu_{\bbeta}$ and $\mathbf{Y} \sim \pi_{(V(\bbeta), \mathbf{d})}$. Formally, this implies that there exists a coupling $\Gamma$ between $\mu_{\bbeta}$ and $\pi_{(V(\bbeta), \mathbf{d})}$ such that $\frac{1}{p} \mathbb{E}_{\Gamma}[\| \mathbf{X} - \mathbf{Y} \|_1] = o(1)$.  As $\mathbf{X} , \mathbf{Y} \in [-1,1]^p$, $\frac{1}{p} \mathbb{E}_{\Gamma} [\| \mathbf{X} - \mathbf{Y} \|_2^2 ] \leq \frac{1}{p} \mathbb{E}_{\Gamma}[\| \mathbf{X} - \mathbf{Y} \|_1] = o(1)$.  In particular, this implies 
\begin{align}
| \mathbb{E}_{\mu_{\bbeta}}[H(\mathbf{X})] - \mathbb{E}_{\pi_{(V(\bbeta), \mathbf{d})}}[ H(\mathbf{Y})] | &= | \mathbb{E}_{\Gamma}[H(\mathbf{X}) - H(\mathbf{Y})] | \nonumber \\
&= | \mathbb{E}_{\Gamma}[\langle \frac{\partial}{\partial \mathbf{x}}  H(\mathbf{Z}) , (\mathbf{Y} - \mathbf{X}) \rangle] | \nonumber \\
&\leq \|  \| \frac{\partial}{\partial \mathbf{x}}  H  \|_2 \|_{\infty} \cdot \mathbb{E}_{\Gamma} \| \mathbf{Y} - \mathbf{X} \|_2  \nonumber \\
&\leq \sqrt{p} \|  \| \frac{\partial}{\partial \mathbf{x}}  H  \|_2\|_{\infty}  \sqrt{\frac{1}{p} \mathbb{E}_{\Gamma} \| \mathbf{Y} - \mathbf{X} \|_2^2 } = o(p). \nonumber 
\end{align} 
where the last step uses Jensen's inequality with the function $x \mapsto \sqrt{x}$. Note that by direct computation, 
\begin{align}
\nabla H(\bbeta) = - \mathbf{X}^{\top} \mathbf{y} + \mathbf{X}^{\top} b'(\mathbf{X} \bbeta) = - \mathbf{X}^{\top} (\mathbf{y} - b'(0) \boldsymbol{1}) + \mathbf{X}^{\top} \mathbf{D}_{t_* \bbeta} \mathbf{X} \bbeta,  \nonumber 
\end{align} 
where the last step follows by Mean Value Theorem and $\mathbf{D}_{t_* \bbeta} = \mathrm{diag}(b''(\langle \mathbf{x}_i , t_* \bbeta \rangle))$. Using our assumption that $\| \mathbf{X}^{\top} (\mathbf{y} - b'(0) \boldsymbol{1}) \|^2 = O(p)$ and $\| \mathbf{X}^{\top} \mathbf{D}_{\bbeta} \mathbf{X}\|_{\mathrm{op}} \lesssim \| \mathbf{X}^{\top} \mathbf{X} \|_{\mathrm{op}} = O(1)$, we have 
\begin{align}
\sup_{\bbeta \in [-1,1]^{p} } \| \| \nabla H(\bbeta) \|_2^2 \|_{\infty} = O(p). \label{eq:grad_sup} 
\end{align} 

On the other hand, 
\begin{align}
D_{\mathrm{KL}}(\mu_{\bbeta} \| \pi_p) &= \mathbb{E}_{\mu_{\bbeta}}\Big[ \log \frac{\mathrm{d} \mu_{\bbeta} }{\mathrm{d}\pi_p} \Big] = \mathbb{E}_{\mu_{\bbeta}}\Big[ \log \frac{\mathrm{d} \pi_{(V(\bbeta), \mathbf{d})}  }{\mathrm{d}\pi_p} \Big] + o(p). \nonumber \\
&= \mathbb{E}_{\mu_{\bbeta}} \Big[ - \bsigma^{\top} V(\bbeta) - \frac{b''(0)}{2} \sum_{j=1}^{p} d_j \sigma_j^2  \Big] + o(p). \nonumber 
\end{align} 

Finally, 
\begin{align}
D_{\mathrm{KL}}(\pi_{(V(\bbeta), \mathbf{d})} \| \pi_p) &= \mathbb{E}_{\pi_{(V(\bbeta), \mathbf{d})} } \Big[ \log \frac{\mathrm{d} \pi_{(V(\bbeta),\mathbf{d})}  }{\mathrm{d}\pi_p} \Big] \nonumber \\
&= \mathbb{E}_{\pi_{(V(\bbeta), \mathbf{d})} } \Big[ - \bsigma^{\top} V(\bbeta) - \frac{b''(0)}{2} \sum_{j=1}^{p} d_j \sigma_j^2  \Big]. \nonumber 
\end{align} 

Thus we have, 
\begin{align}
&\Big| D_{\mathrm{KL}}(\mu_{\bbeta} \| \pi_p) - D_{\mathrm{KL}}(\pi_{(V(\bbeta), \mathbf{d})} \| \pi_p)\Big| \nonumber \\
&\leq \| V(\bbeta) \|_2 \| \mathbb{E}_{\mu_{\bbeta}} [\bsigma] - \mathbb{E}_{\pi_{(V(\bbeta), \mathbf{d})}}[\bsigma] \|_2 +\frac{1}{2} |b''(0)| \max \{d_j\} \| \mathbb{E}_{\mu_{\bbeta}} [\bsigma] - \mathbb{E}_{\pi_{(V(\bbeta), \mathbf{d} )}}[\bsigma] \|_1 = o(p) \nonumber 
\end{align} 
using \eqref{eq:wasserstein} and the discussion around. Combining, we obtain, 
\begin{align}
\sup_{\bbeta \in [-1,1]^{p} } \Big| \Big( \mathbb{E}_{\mu_{\bbeta}}[H(\bsigma)] - D_{\mathrm{KL}}(\mu_{\bbeta} \| \pi_p) \Big) - \Big( \mathbb{E}_{\pi_{(V(\bbeta), \mathbf{d})}}[H(\bsigma)] - D_{\mathrm{KL}}(\pi_{(V(\bbeta), \mathbf{d})} \| \pi_p) \Big)   \Big| = o(p).  \label{eq:approximation} 
\end{align}
This completes part (b).
 
The proof of part (c) is direct from part(b), and is thus omitted.

\subsection{Proof of Theorem \ref{thm:structure}} 
\label{sec:proof4} 
We prove Theorem \ref{thm:structure} in this section.

\begin{proof}[Proof of Theorem \ref{thm:structure}] 

Recall the distribution $\mu_{\bbeta}$ from \eqref{eq:beta_dist}. 
\cite{austin2019structure} establishes that 
\begin{align}
\log \mathcal{Z}_p = \int \Big[ \mathbb{E}_{\mu_{\bbeta} } [-H(\bsigma)] - \mathrm{D}_{\mathrm{KL}}(\mu_{\bbeta} \| \pi_p)  \Big] \mathrm{d}\mu(\bbeta | \mathbf{y}) + o(p). 
\end{align} 
Now using \eqref{eq:approximation}, we have, 
\begin{align}
\log \mathcal{Z}_p = \int \Big[ \mathbb{E}_{\pi_{(V(\bbeta), \mathbf{d})} } [-H(\bsigma)] - \mathrm{D}_{\mathrm{KL}}(\pi_{(V(\bbeta),\mathbf{d})} \| \pi_p)  \Big] \mathrm{d}\mu(\bbeta | \mathbf{y}) + o(p).
\end{align} 
On the other hand, for any $\bbeta \in [-1,1]^{p}$, we have, 
\begin{align}
\log \mathcal{Z}_p  \geq \mathbb{E}_{\pi_{(V(\bbeta), \mathbf{d})} } [-H(\bsigma)] - \mathrm{D}_{\mathrm{KL}}(\pi_{(V(\bbeta),\mathbf{d})} \| \pi_p) . \nonumber 
\end{align} 
For any $\varepsilon >0$ setting 
\begin{align} 
T(\varepsilon) = \{ \bbeta \in [-1,1]^p: \mathbb{E}_{\pi_{(V(\bbeta), \mathbf{d})} } [-H(\bsigma)] - \mathrm{D}_{\mathrm{KL}}(\pi_{(V(\bbeta),\mathbf{d})} \| \pi_p) < \log \mathcal{Z}_p - p \varepsilon\}, \nonumber 
\end{align} 
we have $\mu(T(\varepsilon)|\mathbf{y}) \to 0$ as $p \to \infty$. Now, define the event $\mathcal{E}_p(\delta) = \{\bbeta \in [-1,1]^p: \| \dot{c}_{\pi}(V(\bbeta), \mathbf{d}) - \mathbf{u}^* \|^2 \leq p \delta  \}$. Using the well-separated optimizer property, there exists $\varepsilon>0$ such that  if $\bbeta \in \mathcal{E}_p(\delta)^c$, $\bbeta \in T(\varepsilon)$. Thus for any $\delta>0$, there exists $\varepsilon>0$ such that $\mathcal{E}_p(\delta)^c \subseteq T(\varepsilon)$. In turn, for any $\delta>0$, this directly implies $\mu(\mathcal{E}_p(\delta)^c | \mathbf{y}) \to 0$ as $p \to \infty$.

For $\bbeta \in [-1,1]^p$, define $V(\bbeta)= \mathbf{X}^{\top} \mathbf{y} - \mathbf{X}^{\top} b'(\mathbf{X}\bbeta) + b''(0) \mathrm{diag}(\mathbf{X}^{\top}\mathbf{X}) \bbeta$. 
 Let $\bbeta^*$ be an optimizer of the map  $\phi: [-1,1]^{p} \to \mathbb{R}$ 
\begin{align}
\phi(\bbeta) := \Big[ \mathbb{E}_{\pi_{(V(\bbeta), \mathbf{d} )}}[-H(\bsigma)] - \mathrm{D}_{\mathrm{KL}}(\pi_{(V(\bbeta), \mathbf{d})} \| \pi_p) \Big]. \nonumber 
\end{align} 

We note that 
\begin{align}
\log \mathcal{Z}_p - \phi(\bbeta^*) = o(p) \nonumber 
\end{align} 
implies that $\bbeta^* \in \mathcal{E}_p(\delta)$. 

Now, let $\{\mathcal{C}_1, \cdots, \mathcal{C}_{N(p,\delta')}\}$ be a $\delta'$-partition, as introduced in \cite{austin2019structure}. Formally, this implies for $\bbeta, \bgamma \in \mathcal{C}_j$, $\| V(\bbeta) - V(\bgamma) \|_1 \leq p \delta'$. Without loss of generality, assume that $\bbeta^* \in \mathcal{C}_1$. Fix a representative $\bpsi_1, \cdots, \bpsi_{N(p,\delta')}$ from each partition. In particular, we fix $\bbeta^* = \bpsi_1$ without loss of generality. Now, fix $M>0$, and define a relation $\stackrel{M}{\sim}$ such that for $j \geq 2$,  
\begin{align}
\mathcal{C}_j \stackrel{M}{\sim} \mathcal{C}_1 \,\,\,\, \textrm{if}\,\,\,\, 
\sum_{i: |V(\bbeta^*)_i| \leq M} (\dot{c}_{\pi}(V(\bpsi_j)_i, d_i) - u_i^*)^2 \leq 10 p \delta. \label{eq:M_relation} 
\end{align}

Using \eqref{eq:M_relation}, observe that if $\mathcal{C}_j \stackrel{M}{\nsim} \mathcal{C}_1$,  
\begin{align}
&\sum_{i=1}^{p} (\dot{c}_{\pi}(V(\bpsi_j)_i, d_i) - u_i^*)^2 \nonumber \\
&\geq \sum_{i: |V(\bbeta^*)_i| \leq M} (\dot{c}_{\pi}(V(\bpsi_j)_i, d_i) - u_i^*)^2  > 10 p \delta. \nonumber 
\end{align} 
Further, for any $\bbeta \in \mathcal{C}_j$, $\|V(\bbeta) - V(\bpsi_j)\|_1 \leq p \delta'$. Thus we have, by Taylor expansion, 
\begin{align}
| \dot{c}_{\pi}(V(\bbeta)_i, d_i) - \dot{c}_{\pi}(V(\bpsi_j)_i,d_i) | \leq \ddot{c}_{\pi}(\theta_i , d_i) |V(\bbeta)_i - V(\bpsi_j)_i| \leq |V(\bbeta)_i - V(\bpsi_j)_i|, \nonumber 
\end{align} 
where $\theta_i$ lies between $V(\bbeta)_i$ and $V(\bpsi_j)_i$. Further, we use the bound $\|\ddot{c}_{\pi}\|_{\infty} \leq 1$ in the inequality above. In particular, this implies 
\begin{align} 
&\sum_{i=1}^{p} (\dot{c}_{\pi}(V(\bbeta)_i, d_i) - \dot{c}_{\pi}(V(\bpsi_j)_i,d_i))^2 \nonumber \\
 &\leq  \sum_{i=1}^{p} |\dot{c}_{\pi}(V(\bbeta)_i, d_i) - \dot{c}_{\pi}(V(\bpsi_j)_i,d_i)| \cdot |\dot{c}_{\pi}(V(\bbeta)_i, d_i) - \dot{c}_{\pi}(V(\bpsi_j)_i,d_i)| \nonumber \\
 &\leq 2 \sum_{i=1}^{p} |\dot{c}_{\pi}(V(\bbeta)_i, d_i) - \dot{c}_{\pi}(V(\bpsi_j)_i,d_i)|  \leq 2 \|V(\bbeta) - V(\bpsi_j)\|_1 \leq 2p \delta'. \nonumber 
\end{align} 
Thus for any $\bbeta \in \mathcal{C}_j$, by triangle inequality
\begin{align} 
&\sum_{i=1}^{p} (\dot{c}_{\pi}(V(\bbeta)_i, d_i) - u_i^*)^2 \geq \frac{1}{4} (10p \delta - 2 p \delta') > p\delta \nonumber 
\end{align} 
 for $\delta'$ sufficiently small. In particular, this implies $\mathcal{C}_j \cap \mathcal{E}_p(\delta) = \emptyset$. As $\{\mathcal{C}_1, \cdots, \mathcal{C}_{N(p,\delta)}\}$ forms a partition of $[-1,1]^{p}$, we have, 
 \begin{align}
 \mathcal{E}_p(\delta) \subseteq \cup_{j: \mathcal{C}_j \stackrel{M}{\sim} \mathcal{C}_1} \mathcal{C}_j. \nonumber 
 \end{align} 
 Combining this with our earlier observation, we have, $\mu(\cup_{j: \mathcal{C}_j \stackrel{M}{\sim} \mathcal{C}_1} \mathcal{C}_j \backslash \mathcal{E}_p(\delta) ) \to 0$ as $p \to \infty$.
 
 Next, note that for any $j \geq 1$, $\bbeta \in \mathcal{C}_j$, by Lemma~\ref{lem:wasserstein},
 \begin{align}
& d_{W_1}(\pi_{(V(\bbeta),\mathbf{d})}, \pi_{(V(\mathbf{\bpsi}_j),\mathbf{d})}) \leq \frac{1}{p} \sum_{i=1}^{p} d_{W_1}(\pi_{(V(\bbeta)_i,d_i)} , \pi_{(V(\bpsi_j)_i, d_i)} ) \nonumber \\
&\leq C \frac{1}{p} \sum_{i=1}^{p} |V(\bbeta)_i - V(\bpsi_j)_i| \leq C \delta'. \nonumber  
 \end{align} 
 
 Using \cite{austin2019structure} and \eqref{eq:M_relation}, we have, 
 \begin{align}
 \sum_{j : \mathcal{C}_j \stackrel{M}{\sim} \mathcal{C}_1} \mu(\mathcal{C}_j) \int_{\mathcal{C}_j } d_{W_1}(\mu_{|\mathcal{C}_j} , \pi_{(V(\bbeta), \mathbf{d})} ) \mathrm{d}\mu_{|\mathcal{C}_j} (\bbeta)=o(1).  \nonumber 
 \end{align} 
 
 Using triangle inequality, 
 \begin{align}
 \sum_{j : \mathcal{C}_j \stackrel{M}{\sim} \mathcal{C}_1} \mu(\mathcal{C}_j) d_{W_1}(\mu_{|\mathcal{C}_j} , \pi_{(V(\bpsi_j), \mathbf{d})} )  &\leq \sum_{j : \mathcal{C}_j \stackrel{M}{\sim} \mathcal{C}_1} \mu(\mathcal{C}_j) \int_{\mathcal{C}_j } d_{W_1}(\mu_{|\mathcal{C}_j} , \pi_{(V(\bbeta), \mathbf{d})} ) \mathrm{d}\mu_{|\mathcal{C}_j} (\bbeta) + C\delta' \nonumber \\
 &\leq C \delta' + o(1). \nonumber 
 \end{align} 
 
Setting $\mathcal{S}(p,M) = \{ i : |V(\bbeta^*)_i| >M\}$, for $j \geq 2$ such that $\mathcal{C}_j \stackrel{M}{\sim} \mathcal{C}_1$, we have, 
\begin{align}
d_{W_1}(\pi_{(V(\bpsi_j), \mathbf{d})}, \pi_{(V(\bbeta^*), \mathbf{d})}) \leq \frac{1}{p} \sum_{i: |V(\bbeta^*)_i| \leq M} d_{W_1}(\pi_{(V(\bpsi_j)_i, d_i)}, \pi_{(V(\bbeta^*)_i,d_i)}) + \frac{2}{p} |\mathcal{S}(p,M)| , \label{eq:wasserstein_int1} 
\end{align} 
where we use the fact that $d_{W_1}(X,Y) \leq 2$ for any two random variables taking values in $[-1,1]$. Using Lemma \ref{lem:wasserstein}, the first term can be controlled as 
\begin{align}
&\frac{1}{p} \sum_{i: |V(\bbeta^*)_i| \leq M} d_{W_1}(\pi_{(V(\bpsi_j)_i, d_i)}, \pi_{(V(\bbeta^*)_i,d_i)}) \nonumber \\
&\leq \frac{1}{p} \sum_{i: |V(\bbeta^*)_i| \leq M, |V(\bpsi_j)_i | \leq 2M} | V(\bbeta^*)_i - V(\bpsi)_i| + \frac{2}{p} |\{i: |V(\bbeta^*)_i| \leq M, |V(\bpsi_j)_i | > 2M\}|. \nonumber   
\end{align}

\begin{align}
&\frac{1}{p} \sum_{i: |V(\bbeta^*)_i| \leq M, |V(\bpsi_j)_i | \leq 2M} | V(\bbeta^*)_i - V(\bpsi)_i|   \nonumber \\
&\leq \frac{1}{p} \sum_{i: |V(\bbeta^*)_i| \leq M, |V(\bpsi_j)_i | \leq 2M} \frac{1}{\ddot{c}_{\pi}(\theta_i,d_i)} | \dot{c}_{\pi}(V(\bbeta^*)_i, d_i) - \dot{c}_{\pi}(V(\bpsi_j)_i, d_i)| \nonumber \\
&\leq C \sqrt{\frac{1}{p} \sum_{i: |V(\bbeta^*)_i| \leq M}  (\dot{c}_{\pi}(V(\bbeta^*)_i, d_i) - \dot{c}_{\pi}(V(\bpsi_j)_i, d_i) )^2 } \leq C \sqrt{\delta} 
\end{align} 
for some universal constant $C>0$. On the other hand, if $|V(\bbeta^*)|_i \leq M$ and $|V(\bpsi_j)_i| > 2M$, there exists $\delta'_M$ such that 
\begin{align}
|\dot{c}_{\pi}(V(\bbeta)_i, d_i) - \dot{c}_{\pi}(V(\bpsi)_i,d_i) | \geq \delta'_{M}. \nonumber 
\end{align} 
Thus we have, 
\begin{align}
&(\delta'_M)^2 \frac{|\{ i : |V(\bbeta^*)_i| \leq M, |V(\bpsi_j)_i| > 2M \}|}{p} \nonumber \\
&\leq \frac{1}{p} \sum_{i: |V(\bbeta^*)_i| \leq M} (\dot{c}_{\pi}(V(\bbeta^*)_i, d_i) -  \dot{c}_{\pi}(V(\bpsi_j)_i, d_i))^2 \nonumber \\
&\leq \frac{2}{p} \sum_{i: |V(\bbeta^*)_i| \leq M} (u_i^* -  \dot{c}_{\pi}(V(\bpsi_j)_i, d_i))^2 + \nonumber \\
&+ \frac{2}{p} \sum_{i=1}^{p} (\dot{c}_{\pi}(V(\bbeta^*)_i ,d_i) - u_i^*)^2   \nonumber \\
&\leq 50 \delta. \nonumber 
\end{align} 
In summary, we obtain the upper bound, 
\begin{align}
 \frac{|\{ i : |V(\bbeta^*)_i| \leq M, |V(\bpsi_j)_i| > 2M \}|}{p} \leq 50 \frac{\delta}{(\delta'_M)^2}. \nonumber 
\end{align} 

Plugging these bounds back into \eqref{eq:wasserstein_int1} we have, 
\begin{align}
d_{W_1}(\pi_{(V(\bpsi_j), \mathbf{d})}, \pi_{(V(\bbeta^*), \mathbf{d})}) \leq C \sqrt{\delta} + 100 \frac{\delta}{(\delta'_M)^2} + \frac{2}{p} |\mathcal{S}(p,M)|. \nonumber 
\end{align} 

Using Lemma \ref{eq:rare_coordinates} 
%
given $\varepsilon>0$, we first choose $M$ large enough so that the last term is less than $\varepsilon/2$. Then we choose $\delta>0$ small enough so that the sum of the first two terms is less than $\varepsilon/2$. Thus we have, using \eqref{eq:M_relation}, 
\begin{align}
 \sum_{j : \mathcal{C}_j \stackrel{M}{\sim} \mathcal{C}_1} \mu(\mathcal{C}_j) d_{W_1}(\mu_{|\mathcal{C}_j} , \pi_{(V(\bbeta^*), \mathbf{d})} ) \leq C \delta' + \varepsilon. \nonumber  
\end{align} 
As $\sum_{j : \mathcal{C}_j \stackrel{M}{\nsim} \mathcal{C}_1} \mu(\mathcal{C}_j) = o(1)$ and the Wasserstein distance is bounded, we have, 
\begin{align}
 \sum_{j = 1}^{N(p,\delta)} \mu(\mathcal{C}_j) d_{W_1}(\mu_{|\mathcal{C}_j} , \pi_{(V(\bbeta^*), \mathbf{d})} )  \leq C \delta' + \varepsilon + o(1). \nonumber 
\end{align}  
Since there exists an efficient coupling between $\mu_{|\mathcal{C}_j}$ and $\pi_{(V(\bbeta^*), \mathbf{d})}$ for each $\mathcal{C}_j$, this leads to the Wasserstein bound $d_{W_1}(\mu, \pi_{(V(\bbeta^*), \mathbf{d})}) \leq C \delta' + \varepsilon + o(1)$. The proof is now complete by another application of Lemma \ref{eq:rare_coordinates} and triangle inequality.  
 
 \end{proof} 
 
 \begin{proof}[Proof of Corollary~\ref{cor:logistic_regression_classification_error}]
First of all, let $\Gamma^*$ be the optimal coupling between $\mu(\cdot | \mathbf{y})$ and $\prod_{j=1}^p \pi_{(\tau_j^* , d_j)}$, then for any Lipschitz-$J$ function $f_{p,j}:[-1,1] \to \mathbb{R}$ and $(\bbeta, \bgamma ) \sim \Gamma^*$, we have
	\begin{align}
	\label{eq:wasserstein_implication}
		\frac{1}{p} \left | \sum_{j=1}^p f_{p,j}(\beta_j) - \sum_{j=1}^p f_{p,j}(\gamma_j)\right | \leq \frac{J}{p} \left \| \bbeta - \bgamma \right \|_1 \overset{\Gamma^*}{\longrightarrow} 0.
	\end{align}
\begin{itemize}
	\item[(i)] Let $\tilde{f}_{p,j}(x):\mathbb{R} \to [0,1]$ be 
	\begin{align*}
		\tilde{f}_{p,j}(x) = \begin{cases}
      0 & \text{if $x \leq q_j^{\alpha / 2} - \varepsilon$ or $x \geq q_j^{1-\alpha/2} + \varepsilon$;}\\
      1 & \text{if $ q_j^{\alpha / 2} \leq x \leq q_j^{1-\alpha/2} $;}\\
      (x - q_j^{\alpha / 2} + \varepsilon) / \varepsilon & \text{if $q_j^{\alpha / 2} - \varepsilon \leq x \leq q_j^{\alpha / 2}$;}\\
      (x - q_j^{1-\alpha/2}) / \varepsilon & \text{if $q_j^{1-\alpha/2} \leq x \leq q_j^{1-\alpha/2} + \varepsilon$.}
    \end{cases}  
	\end{align*}
	Further more, let $f_{p,j}^{\varepsilon}$ be $\tilde{f}_{p,j}$ restricted to the domain of $[-1,1]$ and $f_p^{\varepsilon}(\bbeta) := \sum_{j=1}^p f^{\varepsilon}_{p,j}(\beta_j)$. Applying \eqref{eq:wasserstein_implication} to $f_p^{\varepsilon}$ gives
		\begin{align*}
			\frac{1}{p} \left | \sum_{j=1}^p f^{\varepsilon}_{p,j}(\beta_j) - \sum_{j=1}^p f^{\varepsilon}_{p,j}(\gamma_j)\right | \overset{\Gamma^*}{\longrightarrow} 0.
		\end{align*}
		On the other hand, Efron-Stein concentration inequality implies, for any $\delta > 0$,
		\begin{align}
			\pi^{\star} \left (\frac{1}{p} \left | \sum_{j=1}^p f^{\varepsilon}_{p,j}(\gamma_j) - \mathbb{E}\sum_{j=1}^p f^{\varepsilon}_{p,j}(\gamma_j) \right | > \delta \right ) \to 0,\nonumber
		\end{align}
		where $\pi^* := \prod_{j=1}^p \pi_{(\tau_j^*, d_j)}$. Noticing that $\frac{1}{p}\mathbb{E}\sum_{j=1}^p f^{\varepsilon}_{p,j}(\gamma_j) \ge 1-\alpha$ by definition of $\mathcal{I}_i^{\varepsilon}$'s, the two equations above render the desired result.
	\item[(ii)] Similarly, noting that $\mathbb{P}_{ \tilde{Y},\, \bbeta \sim \mu_{\bbeta| \mathbf{y}}, \hat{Y}(\bbeta)}(  \tilde{Y}  \ne \hat{Y}(\bbeta) ) = 2 \left |\mathbb{P} (  \tilde{Y} =1 ) - \mathbb{P} (  \hat{Y}(\bbeta) =1 ) \right |$, applying the same strategy to $f^g_p(\bbeta):= \sum_{j=1}^p f^{g}_{p,j}(\beta_j)= \sum_{j=1}^p (p \cdot \tilde{x}_j)\beta_j $ again gives the desired result.
\end{itemize}
\end{proof}
 
 \section{Auxiliary Results} 
 \label{sec:auxiliary_results}

 \begin{lemma}
 For $u \in [-1,1]$ and $d\geq 0$, let $Q_{u} = \pi_{(h(u,d),d)}$. Then we have, 
 \begin{itemize}
 \item[(i)] $\mathrm{D}_{\mathrm{KL}}(\pi_{(h(u,d),d)} \| \pi):= G(u,d) = u h(u,d) - b''(0) \frac{d}{2} \mathbb{E}_{\pi_{(h(u,d),d)}}[X^2] - c_{\pi}(h(u,d), d)$. 
 \item[(ii)] We have,
 \begin{align}
 \frac{\partial}{\partial u} G(u,d) = h(u,d) - b''(0) \frac{d}{2} \frac{\mathrm{Cov}_{\pi_{(h(u,d),d)}} (X,X^2) }{\ddot{c}_{\pi} (h(u,d),d)}. \nonumber 
 \end{align} 
 \item[(iii)] As $u \to 1$, $\frac{\partial}{\partial u} G(u,d)  \to \infty$. Similarly, as $u \to -1$, $\frac{\partial}{\partial u} G(u,d)  \to -\infty$. 
 \end{itemize} 
 \end{lemma}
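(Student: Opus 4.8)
The plan is to treat the tilted family $\{\pi_{(\gamma_1,d)}\}_{\gamma_1}$, for fixed $d\ge 0$, as a one‑parameter exponential family with natural parameter $\gamma_1$, sufficient statistic $x$, carrier measure $e^{-b''(0)\frac{d}{2}x^2}\,\mathrm{d}\pi(x)$, and log‑partition function $c_\pi(\cdot,d)$. Part (i) is then an immediate computation: by definition of KL divergence,
\[
\mathrm{D}_{\mathrm{KL}}(\pi_{(h(u,d),d)}\|\pi)=\mathbb{E}_{\pi_{(h(u,d),d)}}\Big[h(u,d)X-b''(0)\tfrac{d}{2}X^2-c_\pi(h(u,d),d)\Big],
\]
and substituting $\mathbb{E}_{\pi_{(h(u,d),d)}}[X]=\dot c_\pi(h(u,d),d)=u$ (the defining property of $h$) gives the stated formula for $G(u,d)$.

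For part (ii) I would differentiate the expression from (i) in $u$ with $d$ held fixed. First, the chain rule together with the implicit‑function identity $1=\ddot c_\pi(h(u,d),d)\,\partial_u h(u,d)$ — valid because $\ddot c_\pi(\cdot,d)=\mathrm{Var}_{\pi_{(\cdot,d)}}(X)>0$ for non‑degenerate $\pi$ — yields $\partial_u h(u,d)=1/\ddot c_\pi(h(u,d),d)$. Then $\partial_u[u\,h(u,d)]=h(u,d)+u\,\partial_u h$ while $\partial_u c_\pi(h(u,d),d)=\dot c_\pi(h(u,d),d)\,\partial_u h=u\,\partial_u h$, so these two contributions cancel up to the leftover term $h(u,d)$. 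For the quadratic term, the standard exponential‑family identity $\partial_{\gamma_1}\mathbb{E}_{\pi_{(\gamma_1,d)}}[g(X)]=\mathrm{Cov}_{\pi_{(\gamma_1,d)}}(g(X),X)$ (differentiation under the integral sign is justified by boundedness of $X$) gives $\partial_u\mathbb{E}_{\pi_{(h(u,d),d)}}[X^2]=\mathrm{Cov}_{\pi_{(h(u,d),d)}}(X^2,X)/\ddot c_\pi(h(u,d),d)$. Collecting the pieces produces exactly the claimed formula.

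For part (iii) the key step is a uniform bound on the second term of (ii). Writing $Y:=X-\mathbb{E}X$ under $\pi_{(h(u,d),d)}$, a short expansion gives $\mathrm{Cov}(X,X^2)=\mathbb{E}[Y^3]+2\,\mathbb{E}[X]\,\mathrm{Var}(X)$; since $|Y|\le 2$ pointwise we get $|\mathbb{E}[Y^3]|\le 2\,\mathrm{Var}(X)=2\,\ddot c_\pi(h(u,d),d)$, and since $|\mathbb{E}[X]|\le 1$ this yields
\[
\Big|\frac{\mathrm{Cov}_{\pi_{(h(u,d),d)}}(X,X^2)}{\ddot c_\pi(h(u,d),d)}\Big|\le 4 .
\]
Hence the entire second term in (ii) is bounded in absolute value by $2\,b''(0)\,d$, uniformly in $u$ (recall $b''(0)\ge 0$ and $d\ge 0$). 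It then remains only to show $h(u,d)\to+\infty$ as $u\to 1$ and $h(u,d)\to-\infty$ as $u\to-1$: this holds because $u\mapsto h(u,d)$ is the inverse of the continuous strictly increasing map $\gamma_1\mapsto\dot c_\pi(\gamma_1,d)$, whose range is the open interval between the essential infimum and supremum of $\pi$ (here $(-1,1)$) — concretely, for any $\epsilon>0$ the likelihood ratio $e^{\gamma_1 x-b''(0)\frac{d}{2}x^2}$ forces $\pi_{(\gamma_1,d)}([1-\epsilon,1])\to 1$ as $\gamma_1\to\infty$, so $\dot c_\pi(\gamma_1,d)=\mathbb{E}[X]\to 1$, and symmetrically at the lower end. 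Combining, $\partial_u G(u,d)=h(u,d)+O(1)\to\pm\infty$ as $u\to\pm 1$. I expect the only genuinely delicate point to be this identification of the limiting behavior of $h$ (which implicitly requires $\pm 1$ to lie in the closure of the support of $\pi$, consistent with $h$ being defined on all of $[-1,1]$); everything else is routine exponential‑family calculus.
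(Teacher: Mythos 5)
Your proof is correct and follows essentially the same route as the paper for parts (i) and (ii). In part (iii), the paper bounds $\bigl|\mathrm{Cov}(X,X^2)/\ddot{c}_{\pi}\bigr|$ via Cauchy--Schwarz followed by an Efron--Stein/Lipschitz argument ($\mathrm{Var}(X^2)\le 4\,\mathrm{Var}(X)$), whereas you use the algebraic identity $\mathrm{Cov}(X,X^2)=\mathbb{E}[(X-\mathbb{E}X)^3]+2\,\mathbb{E}[X]\,\mathrm{Var}(X)$ together with the trivial bounds $|X-\mathbb{E}X|\le 2$ and $|\mathbb{E}X|\le 1$; this is a genuinely more elementary way to get the same uniform bound and avoids invoking Efron--Stein. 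You are also somewhat more careful than the paper in two places: you actually verify the cancellation $\partial_u[u\,h]-\partial_u c_\pi(h,d)=h$ (the paper simply states the result of the differentiation), and you flag that the divergence $h(u,d)\to\pm\infty$ as $u\to\pm 1$ tacitly assumes $\pm 1$ lie in the closure of $\mathrm{supp}(\pi)$ so that the range of $\dot{c}_\pi(\cdot,d)$ is all of $(-1,1)$; the paper asserts this divergence without comment, so your caveat is a fair and useful observation rather than a flaw in your argument.
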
 
 
 \begin{proof}
 \begin{itemize}
 \item[(i)] By definition, 
 \begin{align}
 G(u,d) = \mathbb{E}_{\pi_{(h(u,d),d)} } \Big[ \log \frac{\mathrm{d} Q_{u} }{\mathrm{d} \pi} \Big] = \mathbb{E}_{\pi_{(h(u,d),d)}}[ X h(u,d) - b''(0) \frac{d}{2} X^2 - c_{\pi}(h(u,d),d)].\nonumber 
 \end{align} 
 We are done with part (i) once we note that $\mathbb{E}_{\pi_{(h(u,d),d)}} [X] = u$. 
 
 \item[(ii)] By differentiation, we obtain, 
 \begin{align}
 \frac{\partial}{ \partial u} G(u,d) = h(u,d)  - b''(0) \frac{d}{2} \frac{\partial }{\partial u} \mathbb{E}_{\pi_{(h(u,d),d)}}[X^2]. \nonumber 
 \end{align} 
 Further, we note that 
 \begin{align} 
  \frac{\partial }{\partial u} \mathbb{E}_{\pi_{(h(u,d),d)}}[X^2] = \mathrm{Cov}_{\pi_{(h(u,d),d)}} (X,X^2) \frac{\partial}{\partial u} h(u,d). \nonumber 
 \end{align} 
 Finally, note that $u = \dot{c}_{\pi}(h(u,d) , d)$. Differentiating in $u$, we obtain 
 \begin{align}
 \frac{\partial}{\partial u} h(u,d) = \frac{1}{\ddot{c}_{\pi}(h(u,d),d)}.  
  \nonumber 
 \end{align}
 The proof is complete upon plugging the derivative into the display above.  
 
 \item[(iii)] As $u \to 1$, $h(u,d) \to \infty$. For the second term, note that by Cauchy-Schwarz inequality and the observation that $\ddot{c}_{\pi}(h(u,d),d) = \mathrm{Var}_{\pi_{(h(u,d),d)}}(X)$,
 \begin{align}
	\Bigg | \frac{\mathrm{Cov}_{\pi_{(h(u,d),d)}} (X,X^2) }{\ddot{c}_{\pi} (h(u,d),d)} \Bigg | = \Bigg|  \frac{\mathrm{Cov}_{\pi_{(h(u,d),d)}}(X,X^2) }{ \mathrm{Var}_{\pi_{(h(u,d),d)}} (X) } \Bigg| \leq \sqrt{ \frac{ \mathrm{Var}_{\pi_{(h(u,d),d)}} (X^2)}{\mathrm{Var}_{\pi_{(h(u,d),d)}} (X) } }  
 \end{align} 
 The function $x \mapsto x^2$ is Lipschitz on $[-1,1]$ and thus by Efron-Stein inequality, $\mathrm{Var}_{\pi_{(h(u,d),d)}} (X^2) \leq C \cdot \mathrm{Var}_{\pi_{(h(u,d),d)}}(X)$ for some universal constant $C>0$. This completes the proof for the case $u \to 1$. The analysis for $u \to -1$ is analogous, and is thus omitted. 
 \end{itemize} 
 \end{proof} 
 
 \begin{lemma}
 \label{lem:wasserstein} 
 Let $\pi$ be a probability measure on $[-1,1]$. We have, 
 \begin{align}
 \sup_{d \geq 0} d_{W_1}(\pi_{(A_1,d)}, \pi_{(A_2,d)} ) \leq C |A_1 - A_2|,
 \end{align} 
 for some universal constant $C>0$. 
 \end{lemma}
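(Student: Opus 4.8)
The plan is to combine the Kantorovich--Rubinstein dual representation of $d_{W_1}$ with a one-parameter differentiation along the family of tilts. Since $\pi_{(A_1,d)}$ and $\pi_{(A_2,d)}$ differ only in the linear tilt parameter (the quadratic coefficient $b''(0)d/2$ is the same in both), I would fix a $1$-Lipschitz test function $f:[-1,1]\to\R$, interpolate via $\mathbb{E}_{\pi_{(A_1,d)}}[f(X)]-\mathbb{E}_{\pi_{(A_2,d)}}[f(X)]=\int_{A_2}^{A_1}\frac{\dif}{\dif s}\mathbb{E}_{\pi_{(s,d)}}[f(X)]\,\dif s$, and compute the derivative. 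Differentiating under the integral sign (legitimate because the integrand $f(x)\exp(sx-b''(0)\tfrac{d}{2}x^2-c_\pi(s,d))$ is smooth and bounded on the compact set $[-1,1]$, and $\dot c_\pi(s,d)=\mathbb{E}_{\pi_{(s,d)}}[X]$) gives exactly $\frac{\dif}{\dif s}\mathbb{E}_{\pi_{(s,d)}}[f(X)]=\mathrm{Cov}_{\pi_{(s,d)}}\big(f(X),X\big)$.

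Next I would bound this covariance by a $d$-free constant. Replacing $f$ by $f-f(0)$ changes neither side of the asserted inequality, so I may assume $f(0)=0$; then the Lipschitz bound yields $|f(x)|=|f(x)-f(0)|\le|x|\le 1$ on $[-1,1]$, while trivially $|X|\le1$. Cauchy--Schwarz then gives $|\mathrm{Cov}_{\pi_{(s,d)}}(f(X),X)|\le\sqrt{\mathrm{Var}_{\pi_{(s,d)}}(f(X))\,\mathrm{Var}_{\pi_{(s,d)}}(X)}\le1$. Integrating over $s$ between $A_2$ and $A_1$ yields $\big|\mathbb{E}_{\pi_{(A_1,d)}}[f(X)]-\mathbb{E}_{\pi_{(A_2,d)}}[f(X)]\big|\le|A_1-A_2|$, and taking the supremum over all $1$-Lipschitz $f$ (Kantorovich--Rubinstein duality, which here uses that $\ell_1$ on $\R$ is just $|\cdot|$) gives $d_{W_1}(\pi_{(A_1,d)},\pi_{(A_2,d)})\le|A_1-A_2|$. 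Since this estimate never used the value of $d$, taking $\sup_{d\ge0}$ is immediate, and one in fact obtains the claim with $C=1$.

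There is no genuine analytic difficulty here, so the ``main obstacle'' is really just making sure of two bookkeeping points: first, that the covariance bound is honestly uniform in $d$ (it is, since only $|X|\le1$ and $|f(X)|\le1$ were used, never $d$); and second, that the normalization $f(0)=0$ is what upgrades the Lipschitz property to the pointwise bound $|f|\le1$ that controls $\mathrm{Var}_{\pi_{(s,d)}}(f(X))\le1$. Both are handled in a sentence each, and the differentiation-under-the-integral step is routine given compact support.
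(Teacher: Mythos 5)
Your proof is correct and follows essentially the same route as the paper: Kantorovich--Rubinstein duality, differentiation in the tilt parameter to obtain a covariance, and Cauchy--Schwarz. The only substantive deviation is in bounding $\mathrm{Var}_{\pi_{(s,d)}}(f(X))$ --- the paper invokes the Efron--Stein inequality (giving $C=\sqrt{2}$), whereas you normalize $f(0)=0$ and use $|f|\le 1$ directly (giving $C=1$); both are fine, and your interpolation via the fundamental theorem of calculus in place of the mean value theorem is an immaterial variation.
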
 

\begin{proof}
We use the duality formula for the 1-Wasserstein distance. We have, 
\begin{align}
d_{W_1}(\pi_{(A_1,d)} , \pi_{(A_2,d)} ) = \sup\{ |\mathbb{E}_{\pi_{(A_1,d)}}[f(X)] - \mathbb{E}_{\pi_{(A_2,d)}}[f(X)]| : f : [-1,1] \to \mathbb{R},\,\, |f(x) - f(y) | \leq |x-y| \}. \nonumber 
\end{align} 
For any 1-Lipschitz function $f$, differentiating in $A$ we have, 
\begin{align}
|\mathbb{E}_{\pi_{(A_1,d)}}[f(X)] - \mathbb{E}_{\pi_{(A_2,d)}}[f(X)]| \leq |\mathrm{cov}_{\pi_{(A,d)}}(f(X),X) | |A_1-A_2| \nonumber 
\end{align} 
for some $A$ in $[A_1, A_2]$. By Cauchy-Schwarz inequality
\begin{align}
|\mathrm{cov}_{\pi_{(A,d)}}(f(X),X) | \leq \sqrt{\mathrm{Var}_{\pi_{(A,d)}}(X) \mathrm{Var}_{\pi_{(A,d)}}(f(X)) } \leq \sqrt{\mathrm{Var}_{\pi_{(A,d)}}(f(X))}. 
\end{align} 
Finally, by the Efron-Stein inequality, 
\begin{align}
\mathrm{Var}_{\pi_{(A,d)}}(f(X)) \leq \frac{1}{2} \mathbb{E}_{\pi_{(A,d)}}(X-Y)^2 \leq 2. \nonumber 
\end{align} 
This completes the proof. 
\end{proof}

\begin{lemma}
\label{eq:rare_coordinates} 
Assume the conditions of Theorem \ref{thm:covering} are satisfied. Further, assume that there exists a well-separated optimizer $\mathbf{u}^*$. Let $\bbeta^*$ be a maximizer of the map 
\begin{align}
\phi(\bbeta) := \Big[ \mathbb{E}_{\pi_{(V(\bbeta), \mathbf{d} )}}[-H(\bsigma)] - \mathrm{D}_{\mathrm{KL}}(\pi_{(V(\bbeta), \mathbf{d})} \| \pi_p) \Big], \nonumber 
\end{align}
where $V(\bbeta)= \mathbf{X}^{\top} \mathbf{y} - \mathbf{X}^{\top} b'(\mathbf{X}\bbeta) + b''(0) \mathrm{diag}(\mathbf{X}^{\top}\mathbf{X}) \bbeta$. Then we have, 
\begin{itemize}
\item[(i)] 
Define $\mathcal{S}(p,M) = \{i : |V(\bbeta^*)_i| > M\}$.  Then 
\begin{align}
\lim_{M \to \infty} \lim_{p \to \infty} \frac{1}{p} |\mathcal{S}(p,M)| = 0. \nonumber 
\end{align}
\item[(ii)] Define $\boldsymbol{\tau}^* = h(\mathbf{u}^*, \mathbf{d})$. Then we have, 
\begin{align}
d_{W_1}( \pi_{(V(\bbeta^*), \mathbf{d})}, \pi_{(\boldsymbol{\tau}^*, \mathbf{d})}) = o(1) \nonumber 
\end{align} 
as $p \to \infty$. 
\end{itemize} 
\end{lemma}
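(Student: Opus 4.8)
This part is elementary: the plan is to reduce it to the a priori estimate $\|V(\bbeta^*)\|_2^2=O(p)$. By Markov's inequality, $M^2|\mathcal S(p,M)|\le\sum_{i\in\mathcal S(p,M)}V(\bbeta^*)_i^2\le\|V(\bbeta^*)\|_2^2$, so once $\|V(\bbeta^*)\|_2^2\le Cp$ with $C$ independent of $M$ and $p$ we get $\tfrac1p|\mathcal S(p,M)|\le C/M^2$, and letting $p\to\infty$ then $M\to\infty$ finishes. To bound $\|V(\bbeta^*)\|_2$ I would write $V(\bbeta^*)=\mathbf X^\top(\mathbf y-b'(0)\boldsymbol 1)-\mathbf X^\top\bigl(b'(\mathbf X\bbeta^*)-b'(0)\boldsymbol 1\bigr)+b''(0)\,\mathrm{diag}(\mathbf X^\top\mathbf X)\bbeta^*$ and control the three summands: the first is $O(\sqrt p)$ by the hypothesis $\|\mathbf X^\top(\mathbf y-b'(0)\boldsymbol 1)\|^2=O(p)$, which is part of the standing assumptions of Theorem~\ref{thm:alg_glm} and is in force wherever the approximation \eqref{eq:approximation} used in the proof of Theorem~\ref{thm:structure} is invoked; the second is at most $\|\mathbf X^\top\mathbf X\|_2^{1/2}\|b''\|_\infty\|\mathbf X\bbeta^*\|_2\le\|\mathbf X^\top\mathbf X\|_2\|b''\|_\infty\sqrt p$ since $b'$ is $\|b''\|_\infty$-Lipschitz and $\bbeta^*\in[-1,1]^p$; the third is at most $|b''(0)|\bigl(\mathrm{Tr}(\mathbf X^\top\mathbf X)\max_j(\mathbf X^\top\mathbf X)_{jj}\bigr)^{1/2}\le|b''(0)|\|\mathbf X^\top\mathbf X\|_2\sqrt p$. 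By Assumption~\ref{assump:design}(i) every prefactor is $O(1)$, so $\|V(\bbeta^*)\|_2^2=O(p)$ and part (i) follows.

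\textbf{Part (ii), locating $\bbeta^*$.} The first step is to show the means are close: $\|\dot c_\pi(V(\bbeta^*),\mathbf d)-\mathbf u^*\|_2^2\le p\delta$ for every $\delta>0$ once $p$ is large. Using $\pi_{(V(\bbeta)_j,d_j)}=\pi_{(h(\dot c_\pi(V(\bbeta)_j,d_j),d_j),d_j)}$ (so $\pi_{(V(\bbeta),\mathbf d)}=Q_{\mathbf u(\bbeta)}$ with $\mathbf u(\bbeta):=\dot c_\pi(V(\bbeta),\mathbf d)$), and combining \eqref{eq:optimization_indexed_by_beta}, \eqref{eq:approximation}, and Theorem~\ref{thm:alg_glm}(ii), one gets $\phi(\bbeta^*)=\sup_{\mathbf u\in[-1,1]^p}\bigl\{\mathbb E_{Q_{\mathbf u}}[H(\bsigma)]-\mathrm D_{\mathrm{KL}}(Q_{\mathbf u}\|\pi_p)\bigr\}+o(p)$. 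If $\|\mathbf u(\bbeta^*)-\mathbf u^*\|_2^2>p\delta$ along a subsequence, the well-separated optimizer property (Definition~\ref{def:well_separated}) would force $\phi(\bbeta^*)\le\sup_{\mathbf u}\{\cdots\}-p\varepsilon$ for the $\varepsilon=\varepsilon(\delta)>0$ it supplies, contradicting the previous display. (This is exactly the step ``$\bbeta^*\in\mathcal E_p(\delta)$'' of the proof of Theorem~\ref{thm:structure}, established there without the present lemma, so there is no circularity.) Finally $\mathbf u^*=\dot c_\pi(\boldsymbol{\tau}^*,\mathbf d)$ by $\boldsymbol{\tau}^*=h(\mathbf u^*,\mathbf d)$, hence $\|\dot c_\pi(V(\bbeta^*),\mathbf d)-\dot c_\pi(\boldsymbol{\tau}^*,\mathbf d)\|_2^2\le p\delta$.

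\textbf{Part (ii), from means to measures, and the main obstacle.} The crux is to upgrade this $\ell_2$ closeness of the \emph{means} into $\tfrac1p\sum_i d_{W_1}(\pi_{(V(\bbeta^*)_i,d_i)},\pi_{(\tau^*_i,d_i)})=o(1)$. One cannot argue coordinatewise via Lemma~\ref{lem:wasserstein}, because the inverse $h(\cdot,d)$ of $\dot c_\pi(\cdot,d)$ has derivative $1/\mathrm{Var}_{\pi_{(h(u,d),d)}}(X)$, which blows up as the mean $u$ approaches an endpoint of $[a,c]:=\mathrm{conv}(\mathrm{supp}\,\pi)$, so $|V(\bbeta^*)_i-\tau^*_i|$ can be large even when $|\dot c_\pi(V(\bbeta^*)_i,d_i)-u^*_i|$ is tiny. (If $\pi$ is degenerate the claim is trivial, so assume otherwise.) The resolution is to fix small $\eta>0$ and split $[p]$ according to $u^*_i:=\dot c_\pi(\tau^*_i,d_i)$: (a) if $u^*_i$ is within $\eta$ of $a$ or $c$, both measures are close in $W_1$ to the Dirac $\delta_c$ (resp.\ $\delta_a$) at that endpoint --- using the exact identity $d_{W_1}(\pi_{(A,d)},\delta_c)=c-\dot c_\pi(A,d)$ --- giving $d_{W_1}\le 2\eta+|\dot c_\pi(V(\bbeta^*)_i,d_i)-u^*_i|$; (b) if $u^*_i$ is $\eta$-away from $a,c$ and moreover $|\dot c_\pi(V(\bbeta^*)_i,d_i)-u^*_i|\le\eta/2$, then both means lie in a fixed compact sub-interval of $(a,c)$ on which $h(\cdot,d)$ is Lipschitz with constant $L_\eta$ uniform over $d\in[0,\|\mathbf X^\top\mathbf X\|_2]$ (the variance is continuous and bounded below there), so Lemma~\ref{lem:wasserstein} yields $d_{W_1}\le CL_\eta|\dot c_\pi(V(\bbeta^*)_i,d_i)-u^*_i|$; (c) the remaining indices (interior $u^*_i$, mean gap $>\eta/2$) are at most $\tfrac{4}{\eta^2}\|\dot c_\pi(V(\bbeta^*),\mathbf d)-\mathbf u^*\|_2^2\le\tfrac{4p\delta}{\eta^2}$ by Markov, bounded by $d_{W_1}\le c-a\le2$. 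Summing, dividing by $p$, and using $\tfrac1p\|\cdot\|_1\le\tfrac1{\sqrt p}\|\cdot\|_2\le\sqrt\delta$ gives
\[
\frac1p\sum_{i=1}^p d_{W_1}\bigl(\pi_{(V(\bbeta^*)_i,d_i)},\pi_{(\tau^*_i,d_i)}\bigr)\ \le\ 2\eta+(1+CL_\eta)\sqrt\delta+\frac{8\delta}{\eta^2}.
\]
Given $\epsilon>0$, choose $\eta$ with $2\eta<\epsilon/2$, then $\delta=\delta(\eta,\epsilon)$ small enough that the remaining terms sum to $<\epsilon/2$, and send $p\to\infty$. The endpoint blow-up of $h$ in (a)/(b) is the only genuinely delicate point, and the Dirac-comparison bound in (a) is what defuses it; everything else is bookkeeping with Assumption~\ref{assump:design}(i) and the results cited above.
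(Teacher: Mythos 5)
Your proof is correct but takes a genuinely different route from the paper's. For part (i), the paper never bounds $\|V(\bbeta^*)\|_2^2$ directly; instead it introduces the maximizer $\mathbf{u}_p$ of $\mathcal M$ over $[-1,1]^p$, derives the first-order stationarity conditions \eqref{eq:M_derivative}--\eqref{eq:v_defn} satisfied by the implicit variable $\mathbf v_p$ with $\mathbf u_p=\dot c_\pi(\mathbf v_p,\mathbf d)$, shows $\tfrac1p\|\mathbf v_p\|_2^2=O(1)$ (via Efron--Stein bounds on $\|f'_j\|_\infty$), uses Markov to control $|\{j:|v_{p,j}|>M\}|$, and transfers the resulting "few coordinates of $\mathbf u_p$ near $\pm1$" statement to $\mathbf u^*$ and $\dot c_\pi(V(\bbeta^*),\mathbf d)$ by a contradiction argument using $\ell_2$-closeness to $\mathbf u_p$; only at the very end is this pulled back to $V(\bbeta^*)$ via monotonicity of $\dot c_\pi$. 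Your direct decomposition of $V(\bbeta^*)$ and Assumption~\ref{assump:design}(i) sidestep all of this, and in fact your split $\mathbf X^\top(\mathbf y-b'(0)\boldsymbol 1)-\mathbf X^\top(b'(\mathbf X\bbeta^*)-b'(0)\boldsymbol 1)+\cdots$ exploits a cancellation that the paper's $\tfrac4p\|\mathbf X^\top\mathbf y\|^2$ bound does not, making the reliance on the (technically unstated, but in-force) hypothesis $\|\mathbf X^\top(\mathbf y-b'(0)\boldsymbol 1)\|^2=O(p)$ cleaner. For part (ii), the paper's endgame splits indices by whether $\max\{|u_i^*|,|\dot c_\pi(V(\bbeta^*)_i,d_i)|\}\le1-\varepsilon$ (Lipschitz constant $C(\varepsilon)$) or not (few such indices, by the $\mathbf u_p$ machinery above); your alternative is to handle the near-boundary block via the exact identity $d_{W_1}(\pi_{(A,d)},\delta_c)=c-\dot c_\pi(A,d)$, which controls the Wasserstein distance directly in terms of the mean gap and makes the count of near-boundary indices unnecessary --- you then only need the single fact $\|\dot c_\pi(V(\bbeta^*),\mathbf d)-\mathbf u^*\|_2^2\le p\delta$, obtained exactly as in the paper's proof of Theorem~\ref{thm:structure}. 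Your route is shorter and more self-contained for the lemma per se; what the paper's longer route buys is that the stationarity formulas \eqref{eq:M_derivative}--\eqref{eq:v_defn} derived en route are reused to build the iterative algorithm \eqref{eq:simulation_iterative_1}--\eqref{eq:simulation_iterative_2} in Section~\ref{sec:discrete_algo}.
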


\begin{proof}
Define the function $\mathcal{M} : [-1,1]^p \to \mathbb{R}$ such that 
\begin{align}
\mathcal{M}(\mathbf{u}) = \mathbb{E}_{ Q_{\mathbf{u}} }[- H(\bsigma)] - D_{\mathrm{KL}}(Q_{\mathbf{u}} \| \pi_p) = \mathbb{E}_{\bsigma \sim Q_{\mathbf{u}} }[- H(\bsigma)] - G(\mathbf{u}, \mathbf{d})  , \nonumber 
\end{align} 
where $Q_{\mathbf{u}} = \prod_{i=1}^{p} \pi_{(h(u_i,d_i), d_i)}$. Evaluating the gradient at $\mathbf{u} \in (-1,1)^p$ we have, 
\begin{align}
\nabla \mathcal{M}(\mathbf{u}) = \nabla  \mathbb{E}_{ Q_{\mathbf{u}} }[- H(\bsigma)] - h(\mathbf{u}, \mathbf{d}) + b''(0) \frac{\mathbf{d}}{2} \frac{\mathrm{Cov}_{\pi_{ (h(\mathbf{u},\mathbf{d}), \mathbf{d}) }} (X,X^2) }{\ddot{c}_{\pi}(h(\mathbf{u},\mathbf{d}),\mathbf{d}) }. \label{eq:M_derivative} 
\end{align} 
To differentiate the first term, note that 
\begin{align}
\frac{\partial }{\partial u_j} \mathbb{E}_{ Q_{\mathbf{u}} }[- H(\bsigma)] = \frac{\partial }{\partial h(u_j,d_j)} \mathbb{E}_{ Q_{\mathbf{u}} }[- H(\bsigma)] \frac{\partial h(u_j,d_j)}{\partial u_j} = \frac{\mathrm{Cov}_{\pi_{(h(\mathbf{u},\mathbf{d}),\mathbf{d})}} (-H(\bsigma), \sigma_j)}{\ddot{c}_{\pi}(h(u_j,d_j),d_j)}.  \nonumber 
\end{align}

We note that $H(\bsigma) = \sum_{i=1}^{n}    (- y_i  \mathbf{x}_i^{\top} \bsigma + b(\mathbf{x}_i^{\top} \bsigma))$.  For $1\leq j \leq p$, 
\begin{align}
H(\bsigma) = H(\bsigma_{0,j}) -  \sum_{i=1}^{n} y_i x_{ij} \sigma_j  + \sum_{i=1}^{n} (b(\mathbf{x}_i^{\top} \bsigma) - b(\mathbf{x}_i^{\top} \bsigma_{0,j})). \nonumber 
\end{align} 
Thus we have, 
\begin{align}
\mathrm{Cov}_{\pi_{(h(\mathbf{u},\mathbf{d}),\mathbf{d})} } (-H(\bsigma), \sigma_j) =  \ddot{c}_{\pi}(h(u_j,d_j),d_j) \sum_{i=1}^{n} y_i x_{ij} - \mathrm{Cov}_{\pi_{(h(u_j,d_j),d_j)} }(f_j(\sigma_j ; \mathbf{u} ), \sigma_j), \nonumber 
\end{align} 
where 
\begin{align}
f_j(\sigma_j ; \mathbf{u} ) = \mathbb{E}_{\pi_{(h(\mathbf{u},\mathbf{d}),\mathbf{d})} }\Big[\sum_{i=1}^{n} (b(\mathbf{x}_i^{\top} \bsigma) - b(\mathbf{x}_i^{\top} \bsigma_{0,j})) \Big| \sigma_j \Big]. \nonumber 
\end{align} 
In particular, this implies $\frac{\partial}{\partial u_j} \mathcal{M}(\mathbf{u}) \to - \infty$ as $|u| \to 1$. Specifically, this implies that $\mathcal{M}(\cdot)$ attains its maximum on  $[-1,1]^p$. Let $\mathbf{u}_p$ be a maximizer of $\mathcal{M}$. Using $\nabla \mathcal{M}(\mathbf{u}_p)=0$  we have, $\mathbf{u}_p = \dot{c}_\pi (\mathbf{v}_p, \mathbf{d})$, where $\mathbf{v}_p \in \mathbb{R}^p$ satisfies  
\begin{align}
v_{p,j} = \sum_{i=1}^{n} y_i x_{ij}  - \frac{\mathrm{Cov}_{\pi_{(h(u_{p,j},d_j),d_j)} }(f_j(\sigma_j; \mathbf{u}_p), \sigma_j)}{\ddot{c}_{\pi}(h(u_{p,j},d_j),d_j)} + b''(0) \frac{d_j}{2} \frac{\mathrm{Cov}_{\pi_{ (h(u_{p,j}, d_j), d_j) }} (\sigma_j^2,\sigma_j) }{\ddot{c}_{\pi}(h(u_{p,j}, d_j),d_j)}. \label{eq:v_defn} 
\end{align}  

For any $M>0$, 
\begin{align}
\frac{M^2}{p} | \{j: |v_{p,j}| > M \}|  \leq  \frac{1}{p} \sum_{j=1}^{p} v_{p,j}^2. \nonumber  
\end{align} 
Furthermore, 
\begin{align}
\frac{1}{p} \sum_{j=1}^{p} v_{p,j}^2 \leq \frac{4}{p} \| \mathbf{X}^{\top} \mathbf{y} \|^2 + (b''(0))^2  \frac{C}{p} \sum_{j=1}^{p} d_j^2 + \frac{C}{p}  \sum_{j=1}^{p} \Big( \frac{\mathrm{Cov}_{\pi_{(h(u_{p,j},d_j),d_j)} }(f_j(\sigma_j; \mathbf{u}_p), \sigma_j)}{\ddot{c}_{\pi}(h(u_{p,j},d_j),d_j)} \Big)^2,  \nonumber 
\end{align} 
where $C>0$ is a universal constant, and we use that $\mathrm{Cov}_{\pi_{ (h(u_{p,j}, d_j), d_j) }} (\sigma_j^2,\sigma_j)  \leq C \ddot{c}_{\pi}(h(u_{p,j}, d_j),d_j)$ for some universal constant $C>0$. The last assertion follows by Cauchy-Schwarz and then an application of Efron-Stein inequality. 

Finally, note that by Cauchy Schwarz inequality, 
\begin{align}
\frac{1}{p}  \sum_{j=1}^{p} \Big( \frac{\mathrm{Cov}_{\pi_{(h(u_{p,j},d_j),d_j)} }(f_j(\sigma_j; \mathbf{u}_p), \sigma_j)}{\ddot{c}_{\pi}(h(u_{p,j},d_j),d_j)} \Big)^2 
\leq \frac{1}{p}  \sum_{j=1}^{p} \frac{\mathrm{Var}_{\pi_{(h(u_{p,j},d_j),d_j)} }(f_j(\sigma_j; \mathbf{u}_p))}{\mathrm{Var}_{\pi_{(h(u_{p,j},d_j),d_j)} }(\sigma_j )}.  
\end{align} 
By differentiation, 
\begin{align} 
&f_j'(\sigma_j ; \mathbf{u}_p) = \mathbb{E}_{\pi_{( h(\mathbf{u}_p,\mathbf{d}) ,\mathbf{d})}}\Big[ \sum_{i=1}^{n} b'(\mathbf{x}_i^{\top} \bsigma) x_{ij}  \Big| \sigma_j \Big]\nonumber \\
&= \mathbb{E}_{\pi_{( h(\mathbf{u}_p,\mathbf{d}) ,\mathbf{d})}}\Big[ \sum_{i=1}^{n} b'(\mathbf{x}_i^{\top} \bsigma) x_{ij}  \Big] + \mathbb{E}_{\pi_{( h(\mathbf{u}_p,\mathbf{d}) ,\mathbf{d})}} \Big[ \sum_{i=1}^{n}x_{ij}  (b'(\mathbf{x}_i^{\top} \bsigma)  -  b'(\mathbf{x}_i^{\top} \bsigma_{j,\tau_j})) |\sigma_j\Big]. \nonumber \\
&= \mathbb{E}_{\pi_{( h(\mathbf{u}_p,\mathbf{d}) ,\mathbf{d})}}\Big[ \sum_{i=1}^{n} b'(\mathbf{x}_i^{\top} \bsigma) x_{ij}  \Big] +   \mathbb{E}_{\pi_{( h(\mathbf{u}_p,\mathbf{d}) ,\mathbf{d})}} \Big[ \sum_{i=1}^{n} x_{ij}^2 b''(\mathbf{x}_i^{\top} \bsigma_{j,*}) (\sigma_j - \tau_j) | \sigma_j \Big]. \nonumber 
\end{align} 

Thus 
\begin{align}
\| f'_j(\cdot ; \mathbf{u}_p) \|_{\infty} \leq \Big|\mathbb{E}_{\pi_{( h(\mathbf{u}_p,\mathbf{d}) ,\mathbf{d})}}\Big[ \sum_{i=1}^{n} b'(\mathbf{x}_i^{\top} \bsigma) x_{ij}  \Big] \Big|  + 2 \| b''\|_{\infty} d_j. \nonumber 
\end{align} 

By Efron-Stein inequality, 
\begin{align}
\mathrm{Var}_{\pi_{(h(u_{p,j},d_j),d_j)} }(f_j(\sigma_j; \mathbf{u}_p)) \leq \| f'_j(\cdot ; \mathbf{u}_p) \|_{\infty}^2 \mathrm{Var}_{\pi_{(h(u_{p,j},d_j),d_j)} }(\sigma_j )
\end{align} 

Combining, we obtain 
\begin{align}
&\frac{1}{p} \sum_{j=1}^{p}  \frac{\mathrm{Var}_{\pi_{(h(u_{p,j},d_j),d_j)} }(f_j(\sigma_j; \mathbf{u}_p))}{\mathrm{Var}_{\pi_{(h(u_{p,j},d_j),d_j)} }(\sigma_j )} \leq \frac{1}{p} \sum_{j=1}^{p} \| f'_j(\cdot ; \mathbf{u}_p) \|_{\infty}^2. \nonumber\\
 &\leq \frac{2}{p} \sum_{j=1}^{p} \Big( \mathbb{E}_{\pi_{( h(\mathbf{u}_p,\mathbf{d}) ,\mathbf{d})}}\Big[ \sum_{i=1}^{n} b'(\mathbf{x}_i^{\top} \bsigma) x_{ij}  \Big] \Big)^2 + 8 \| b''\|_{\infty}^2 \frac{1}{p} \sum_{j=1}^{p}  d_j^2 \leq C  \nonumber 
\end{align} 
for some universal constant $C>0$. Thus we obtain 
\begin{align}
 | \{j: |v_{p,j}| > M \}| \leq \frac{Cp}{M^2}. \nonumber 
\end{align}
In conclusion, this implies 
\begin{align}
\frac{1}{p} | \{ i: |u_{p,i}| > 1- \varepsilon \} | \leq \frac{C}{M(\varepsilon)^2}, \nonumber 
\end{align} 
where $M(\varepsilon) \to \infty$ as $\varepsilon \to 0$. 

Now, let $\mathbf{r}_p$ be any sequence of points in $[-1,1]^p$ such that $\frac{1}{p} \| \mathbf{u}_p - \mathbf{r}_p\|^2 \to 0$ as $p \to \infty$. Note that both $\mathbf{u}^*$ and $\dot{c}_{\pi}( V(\bbeta^*), \mathbf{d})$ satisfy this property.  Assume if possible that there exists $\delta'>0$ such that 
\begin{align}
\lim_{\varepsilon \to 0} \lim_{p \to \infty} \frac{1}{p} | \{i : |r_{p,i}| > 1- \varepsilon \}| > \delta'. 
\end{align} 
Consider the set of indices such that $|r_{p,i}| > 1 - \varepsilon/2$. There are at least $p \delta'$ many such indices. On the other hand, there are at most $Cp/ M(\varepsilon)^2$ many indices where $|u_{p,i}| > 1- \varepsilon$.  For $\varepsilon$ sufficiently small, there are $\Theta(p)$ indices where $|u_{p,i} - r_{p,i}| > \varepsilon/2$. This directly implies 
\begin{align}
\frac{1}{p} \| \mathbf{u}_p - \mathbf{r}_p\|^2 \geq \delta'' >0 \nonumber 
\end{align} 
which is a contradiction. 

In particular, this implies 
\begin{align}
d_{W_1}(\pi_{(V(\bbeta^*) , \mathbf{d})} , \pi_{(\boldsymbol{\tau}^*, \mathbf{d})}) =& \frac{1}{p} \sum_{\max\{ |u^*_i|, |\dot{c}_p(V(\bbeta^*)_i,d_i)|\} \leq 1 - \varepsilon }d_{W_1}(\pi_{(V(\bbeta^*)_i,d_i)} , \pi_{(\tau_i,d_i)})  \nonumber \\
&+ \frac{1}{p} \sum_{\max\{ |u^*_i|, |\dot{c}_p(V(\bbeta^*)_i,d_i)|\} >  1 - \varepsilon } d_{W_1}(\pi_{(V(\bbeta^*)_i,d_i)} , \pi_{(\tau_i,d_i)})  \nonumber \\
&= \frac{1}{p} \sum_{\max\{ |u^*_i|, |\dot{c}_p(V(\bbeta^*)_i,d_i)|\} \leq 1 - \varepsilon } | V(\bbeta^*)_i - \tau^*_i|  \nonumber \\
&+ \frac{2}{p}  | \{i: \max\{ |u^*_i|, |\dot{c}_p(V(\bbeta^*)_i,d_i)|\} >  1 - \varepsilon \} |. \nonumber \\
&\leq \frac{C(\varepsilon)}{p} \sum_{i=1}^{p}   ( \dot{c}_{\pi}(V(\bbeta^*)_i, d_i) - \dot{c}_{\pi}(\tau_i, d_i))^2 \nonumber \\
&+ \frac{2}{p}  | \{i: \max\{ |u^*_i|, |\dot{c}_p(V(\bbeta^*)_i,d_i)|\} >  1 - \varepsilon \} |. \nonumber 
\end{align} 

The required conclusion now follows by first setting $p \to \infty$, and then $\varepsilon \to 0$.  Finally, note that 
\begin{align}
\frac{1}{p} |\mathcal{S}(p,M)| = \frac{1}{p} |\{i: |\dot{c}_{\pi}(V(\bbeta^*)_i, d_i)| > 1 - \varepsilon(M)\}|, \nonumber  
\end{align} 
where $\varepsilon(M) \to 0$ as $M \to \infty$. The proof now follows immediately from the observation above. 

\end{proof}

\begin{lemma}
\label{lem:perturbation} 
Let $\{\mathbf{x}_i \in \mathbb{R}^{p} : 1\leq i \leq n\} $ be such that $\| \sum_{i=1}^{n} \mathbf{x}_i \mathbf{x}_i^{\top} \|_{\mathrm{op}} = O(1)$. Let $(d_i)_{i=1}^{n}$, $(\bar{d}_i)_{i=1}^{n}$ be such that $d_i, \bar{d}_i \geq 0$ for all $1\leq i \leq n$. Finally, assume that $|d_i - \bar{d}_i | \leq \varepsilon$ for all $1\leq i \leq n$. Then we have, 
\begin{align}
\Big\| \sum_{i=1}^{n} d_i \mathbf{x}_i \mathbf{x_i}^{\top} - \sum_{i=1}^{n} \bar{d}_i \mathbf{x}_i \mathbf{x}_i^{\top}  \Big\|_{\mathrm{op}} \leq C \varepsilon \nonumber 
\end{align} 
for some universal constant $C>0$.  
\end{lemma}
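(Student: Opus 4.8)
The plan is to bound the operator norm of the symmetric difference matrix $\mathbf{M} := \sum_{i=1}^n (d_i - \bar d_i)\,\mathbf{x}_i \mathbf{x}_i^{\top}$ directly via its quadratic form. Since $\mathbf{M}$ is symmetric, $\|\mathbf{M}\|_{\mathrm{op}} = \sup_{\|\mathbf{v}\|_2 = 1} |\mathbf{v}^{\top} \mathbf{M} \mathbf{v}|$, so it suffices to control $\mathbf{v}^{\top}\mathbf{M}\mathbf{v}$ uniformly over unit vectors $\mathbf{v}$.

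For any unit vector $\mathbf{v} \in \mathbb{R}^p$, write $\mathbf{v}^{\top}\mathbf{M}\mathbf{v} = \sum_{i=1}^n (d_i - \bar d_i)\langle \mathbf{x}_i, \mathbf{v}\rangle^2$. Using $|d_i - \bar d_i| \le \varepsilon$ and the nonnegativity of each term $\langle \mathbf{x}_i,\mathbf{v}\rangle^2$, the triangle inequality gives
\begin{align}
|\mathbf{v}^{\top}\mathbf{M}\mathbf{v}| \le \sum_{i=1}^n |d_i - \bar d_i|\,\langle \mathbf{x}_i,\mathbf{v}\rangle^2 \le \varepsilon \sum_{i=1}^n \langle \mathbf{x}_i,\mathbf{v}\rangle^2 = \varepsilon\, \mathbf{v}^{\top}\Big(\sum_{i=1}^n \mathbf{x}_i \mathbf{x}_i^{\top}\Big)\mathbf{v} \le \varepsilon \Big\|\sum_{i=1}^n \mathbf{x}_i \mathbf{x}_i^{\top}\Big\|_{\mathrm{op}}. \nonumber
\end{align}
Taking the supremum over $\|\mathbf{v}\|_2 = 1$ and invoking the hypothesis $\|\sum_{i=1}^n \mathbf{x}_i \mathbf{x}_i^{\top}\|_{\mathrm{op}} = O(1)$ yields $\|\mathbf{M}\|_{\mathrm{op}} \le C\varepsilon$ for a universal constant $C>0$, as claimed. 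There is no real obstacle here: the only point to be careful about is that the bound on the quadratic form transfers to the operator norm, which holds precisely because $\mathbf{M}$ is symmetric (being a real linear combination of the symmetric matrices $\mathbf{x}_i \mathbf{x}_i^{\top}$), and that we retain $\langle \mathbf{x}_i, \mathbf{v}\rangle^2 \ge 0$ so that pulling the absolute value inside the sum does not lose the cancellation-free comparison with $\sum_i \mathbf{x}_i\mathbf{x}_i^{\top}$.
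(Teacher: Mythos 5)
Your proof is correct and follows essentially the same route as the paper: both bound the quadratic form $\mathbf{v}^{\top}\mathbf{M}\mathbf{v} = \sum_i (d_i - \bar d_i)\langle \mathbf{x}_i,\mathbf{v}\rangle^2$ via the triangle inequality together with the nonnegativity of $\langle \mathbf{x}_i,\mathbf{v}\rangle^2$, then pass to the operator norm by symmetry of $\mathbf{M}$.
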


\begin{proof}
Define $\mathbf{D} = \mathrm{diag}(d_1, \ldots, d_n)$ and $\bar{\mathbf{D}} = \mathrm{diag}(\bar{d}_1, \ldots, \bar{d}_n)$. We have $\sum_{i=1}^{n} d_i \mathbf{x}_i \mathbf{x}_i^{\top} = \mathbf{X}^{\top} \mathbf{D} \mathbf{X}$ and $\sum_{i=1}^{n} \bar{d}_i \mathbf{x}_i \mathbf{x}_i^{\top} = \mathbf{X}^{\top} \bar{\mathbf{D}} \mathbf{X}$. In turn, we have, 
\begin{align}
\| \mathbf{X}^{\top} \mathbf{D} \mathbf{X} - \mathbf{X}^{\top} \bar{\mathbf{D}} \mathbf{X} \|_{\mathrm{op}} = \max\{ |\lambda_{\max}(\mathbf{X}^{\top} \mathbf{D} \mathbf{X} - \mathbf{X}^{\top} \bar{\mathbf{D}} \mathbf{X})|, |\lambda_{\min}(\mathbf{X}^{\top} \mathbf{D} \mathbf{X} - \mathbf{X}^{\top} \bar{\mathbf{D}} \mathbf{X})|  \}. \nonumber 
\end{align}
For any $\mathbf{v} \in \mathbb{R}^n$ with $\| \mathbf{v} \|_2 = 1$, we have, 
\begin{align}
\Big| \mathbf{v}^{\top} \mathbf{X}^{\top} (\mathbf{D} - \bar{\mathbf{D}}) \mathbf{X} \mathbf{v} \Big| \leq \sum_{i=1}^{n} |d_i - \bar{d}_i| \langle \mathbf{x}_i, \mathbf{v} \rangle^2 \leq \varepsilon \Big\|  \sum_{i=1}^{n} \mathbf{x}_i \mathbf{x}_i^{\top}\Big\|_{\mathrm{op}}. \nonumber  
\end{align}
The proof is complete once we note that  
\begin{align}
\max\{ |\lambda_{\max}(\mathbf{X}^{\top} \mathbf{D} \mathbf{X} - \mathbf{X}^{\top} \bar{\mathbf{D}} \mathbf{X})|, |\lambda_{\min}(\mathbf{X}^{\top} \mathbf{D} \mathbf{X} - \mathbf{X}^{\top} \bar{\mathbf{D}} \mathbf{X})|\}  \leq \sup_{\|\mathbf{v}\|_2 =1} \Big|  \mathbf{v}^{\top} \mathbf{X}^{\top} (\mathbf{D} - \bar{\mathbf{D}}) \mathbf{X} \mathbf{v} \Big|. \nonumber 
\end{align}  
\end{proof}

\begin{lemma}
\label{lemma:diag_dominance} 
Let $\mathbf{M}_1$, $\mathbf{M}_2$ be $n \times n$ real symmetric matrices. For any symmetric matrix $\mathbf{M}$, define $\mathrm{diag}(\mathbf{M})$ as a diagonal matrix with $\mathrm{diag}(\mathbf{M})_{ii} = \mathbf{M}_{ii}$. Then we have, 
\begin{align}
\| \mathrm{diag}(\mathbf{M}_1) - \mathrm{diag}(\mathbf{M}_2) \|_{\mathrm{op}} \leq \| \mathbf{M}_1 - \mathbf{M}_2 \|_{\mathrm{op}}. \nonumber 
\end{align} 
\end{lemma}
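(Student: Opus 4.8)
The plan is to reduce the statement to the single-matrix inequality $\|\mathrm{diag}(\mathbf{M})\|_{\mathrm{op}} \le \|\mathbf{M}\|_{\mathrm{op}}$ for an arbitrary symmetric matrix $\mathbf{M}$, and then prove the latter directly using the variational characterization of the operator norm for symmetric matrices.

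First I would observe that the map $\mathbf{M} \mapsto \mathrm{diag}(\mathbf{M})$ is linear in the entries, so $\mathrm{diag}(\mathbf{M}_1) - \mathrm{diag}(\mathbf{M}_2) = \mathrm{diag}(\mathbf{M}_1 - \mathbf{M}_2)$. Setting $\mathbf{M} := \mathbf{M}_1 - \mathbf{M}_2$ (which is again real symmetric), it therefore suffices to show $\|\mathrm{diag}(\mathbf{M})\|_{\mathrm{op}} \le \|\mathbf{M}\|_{\mathrm{op}}$.

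Next I would use that $\mathrm{diag}(\mathbf{M})$ is a diagonal matrix, so its operator norm equals $\max_{1 \le i \le n} |\mathbf{M}_{ii}|$. For each $i$, writing $\mathbf{e}_i$ for the $i$-th standard basis vector (a unit vector), we have $\mathbf{M}_{ii} = \mathbf{e}_i^{\top} \mathbf{M} \mathbf{e}_i$, hence
\begin{align}
|\mathbf{M}_{ii}| = |\mathbf{e}_i^{\top} \mathbf{M} \mathbf{e}_i| \le \sup_{\|\mathbf{v}\|_2 = 1} |\mathbf{v}^{\top} \mathbf{M} \mathbf{v}| = \|\mathbf{M}\|_{\mathrm{op}}, \nonumber
\end{align}
where the final equality is the standard fact that for a symmetric matrix the operator norm equals $\max\{|\lambda_{\max}(\mathbf{M})|, |\lambda_{\min}(\mathbf{M})|\} = \sup_{\|\mathbf{v}\|_2=1}|\mathbf{v}^\top \mathbf{M}\mathbf{v}|$. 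Taking the maximum over $i$ gives $\|\mathrm{diag}(\mathbf{M})\|_{\mathrm{op}} = \max_i |\mathbf{M}_{ii}| \le \|\mathbf{M}\|_{\mathrm{op}}$, which combined with the first step completes the proof.

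There is no real obstacle here: the only thing one must be slightly careful about is that the variational formula $\|\mathbf{M}\|_{\mathrm{op}} = \sup_{\|\mathbf{v}\|_2=1}|\mathbf{v}^\top \mathbf{M}\mathbf{v}|$ requires symmetry of $\mathbf{M}$, which holds by hypothesis, and that the operator-norm identity for diagonal matrices is invoked in the correct direction. Both are elementary and can be cited without further comment.
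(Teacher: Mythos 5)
Your proof is correct and follows essentially the same route as the paper's: both identify $\|\mathrm{diag}(\mathbf{M}_1)-\mathrm{diag}(\mathbf{M}_2)\|_{\mathrm{op}}$ with $\max_i |(\mathbf{M}_1)_{ii}-(\mathbf{M}_2)_{ii}|$ and then bound this by $\|\mathbf{M}_1-\mathbf{M}_2\|_{\mathrm{op}}$; you merely make the last step explicit via the variational formula $\|\mathbf{M}\|_{\mathrm{op}}=\sup_{\|\mathbf{v}\|_2=1}|\mathbf{v}^\top\mathbf{M}\mathbf{v}|$ applied to the standard basis vectors, which the paper leaves implicit.
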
 

\begin{proof} 
As $\mathrm{diag}(\mathbf{M}_1)$ and $\mathrm{diag}(\mathbf{M}_2)$ are diagonal matrices, we have, 
\begin{align}
\| \mathrm{diag}(\mathbf{M}_1) - \mathrm{diag}(\mathbf{M}_2) \|_{\mathrm{op}} \leq \max_{1\leq i \leq n} |(\mathbf{M}_1)_{ii} - (\mathbf{M}_2)_{ii}| \leq \| \mathbf{M}_1 - \mathbf{M}_2 \|_{\mathrm{op}}. \nonumber 
\end{align}
This completes the proof.  

\end{proof}

\bibliographystyle{plain}
\bibliography{ref.bib}

%
%
%
%
%
%
%
%
%
%
%

\end{document}